\author[Mohammad F.Tehrani]{Mohammad  Farajzadeh Tehrani}
\address{Department of Mathematics, Princeton
University}
\email{mfarajza@math.princeton.edu}
\title[Open GW theory on symplectic manifolds and symplectic cutting]
{Open Gromov-Witten theory on symplectic~manifolds and symplectic cutting}
\numberwithin{equation}{section}
\newtheorem{theorem}{Theorem}[section]
\newtheorem{lemma}[theorem]{Lemma}
\newtheorem{corollary}[theorem]{Corollary}
\newtheorem{proposition}[theorem]{Proposition}
\theoremstyle{definition}
\newtheorem{definition}[theorem]{Definition}
\newtheorem{remark}[theorem]{Remark}
\newtheorem{example}[theorem]{Example}
\def\R{\mathbb R}
\def\C{\mathbb C}
\def\Z{\mathbb Z}
\def\Q{\mathbb Q}
\def\P{\mathbb P}
\def\H{\mathbb H}
\def\mc{\mathcal}
\def\la{\lambda}
\def\ep{\epsilon}
\def\ra{\rightarrow}
\def\De{\Delta}
\def\de{\delta}
\def\om{\omega}
\def\si{\sigma}
\def\Si{\Sigma}
\def\La{\Lambda}
\def\ze{\zeta}
\def\ov#1{\overline{#1}}
\def\tn#1{\textnormal{#1}}
\def\mf#1{\mathfrak{#1}}
\def\Eq#1{\begin{equation}\label{#1}}
\def\EEq{\end{equation}}
\def\sEq#1{\begin{equation*}\label{#1}}
\def\sEEq{\end{equation*}}
\def\Thm#1{\begin{theorem}\label{#1}}
\def\EThm{\end{theorem}}
\def\Lem#1{\begin{lemma}\label{#1}}
\def\ELem{\end{lemma}}
\def\Prop#1{\begin{proposition}\label{#1}}
\def\EProp{\end{proposition}}
\def\Fix{\tn{Fix}}
\def\ev{\tn{ev}}
\def\id{\tn{id}}
\def\vir{\tn{vir}}
\def\index{\tn{ind}}
\def\disc{\tn{disc}}
\def\FS{\tn{FS}}
\def\cut{\tn{cut}}
\def\td{\tn{d}}
\begin{document}
\begin{abstract}
Let $(X,\om)$ be a symplectic manifold and $L$ be a Lagrangian
submanifold diffeomorphic to $S^n$, $\R\P^n$, or a Lens space of a certain
type. Using the symplectic cut and symplectic sum constructions,
we express the open Gromov-Witten invariants of $(X,L)$ in terms
of open Gromov-Witten invariants of a pair $(X_-,L)$ determined by $L$
and the standard Gromov-Witten invariants of a symplectic manifold $X_+$
determined by $(X,L)$. We also describe other applications of this
approach.
\end{abstract}
\maketitle
\tableofcontents
\section{Introduction\label{ch:introduction}}

Let $(X,\om,\phi)$ be a symplectic manifold, which we will assume to be connected throughout this paper, 
with a real structure $\phi$, i.e a diffeomorphism $\phi\colon X\to X$ such that $\phi^2=\id_X$ and  
$\phi^*\om=-\om$. Let $L=\Fix(\phi) \subset X$ be the fixed point locus of $\phi$; $L$ is a Lagrangian submanifold of $(X,\om)$ which can be empty. An almost complex structure $J$ on $TX$ is called \textsf{$(\om,\phi)$-compatible} if $\phi^*J=-J$ and $\om(\cdot,J\cdot)$ is a metric.

Fix a compatible almost complex structure $J$. We define $\mc{M}_{k,l}^{\disc}(X,L,\beta)$ to be the moduli space of somewhere injective $J$-holomorphic discs
\begin{equation}\label{J-disc}
u\colon (D^2,S^1) \to (X,L),
\end{equation}
$$ du+J\circ du\circ j=0,\qquad u^{-1}(u(z))=\{z\} \quad \textnormal{for almost every }z\in \P^1,$$
\noindent with $k$ boundary marked points and $l$ interior marked points, with $k+2l=n$, representing some relative homology class $\beta\in H_2(X,L)$. By \cite[Appendix C]{MS2}, this moduli space has real virtual dimension
$$\dim^{\vir}\mc{M}_{k,l}^{\disc}(X,\beta)= \dim_{\C}X+\mu(\beta)+n-3,$$ 
where $\mu(\beta)$ is the Maslov index of $\beta$. If $L=\Fix(\phi)$, corresponding to every map (\ref{J-disc}) there is another $J$-holomorphic map
$$\tilde{u}=\tau_{\mc{M}}(u)=\phi\circ u \circ c,$$
where $c(z)=\bar{z}$ is the complex conjugation on $D^2$. The two discs above need not be in the same homology class, but both have the same Maslov index and symplectic area; see \cite[Definition 2.4.17]{FOOO}. In the presence of an involution, we define an equivalence relation $\sim$ on $H_2(X,L)$ by
\begin{equation}\label{equivalence-relation}
\beta_1\sim\beta_2 \;\Leftrightarrow \; \mu(\beta_1)=\mu(\beta_2),\; \omega(\beta_1)=\omega(\beta_2).
\end{equation}
In the the definition of $\mc{M}_{k,l}^{\disc}(X,L,\beta)$, we often consider $\beta$ to be an element of $H_2(X,L)/\sim$, instead of $H_2(X,L)$.\\

Let $\ov{\mc{M}}_{k,l}^{\disc}(X,L,\beta)$ be the stable map compactification of  $\mc{M}_{k,l}^{\disc}(X,L,\beta)$; see \cite[Section~7]{FOOO} for the definition. Let 
\begin{equation}\label{evaluation-maps}
\begin{aligned}
& \ev_i^{B}\colon \ov{\mc{M}}_{k,l}^{\disc}(X,L,\beta) \to L,  &\ev_i^{B}([u,\Si,(w_j)_{j=1}^k,(z_j)_{j=1}^{l})])= u(w_i),\\
& \ev_i\colon \ov{\mc{M}}_{k,l}^{\disc}(X,L,\beta) \to X,     &\ev_i([u,\Si,(w_j)_{j=1}^k,(z_j)_{j=1}^{l})])= u(z_i),
\end{aligned}
\end{equation}
be the natural evaluation maps.

For the classic moduli space $\ov{\mc{M}}_{n}(X,A)$ of $J$-holomorphic spheres in homology class $A$, \textsf{Gromov-Witten invariants} are defined via integrals of the form
\begin{equation}\label{GW-invariants}
\left\langle \theta_1,\cdots,\theta_n\right\rangle_{A}= \int_{[\ov{\mc{M}}_{n}(X,A)]^{\vir}} \ev_1^*(\theta_1)\wedge \cdots \wedge \ev_n^*(\theta_n),
\end{equation}
where $\theta_i$'s are cohomology classes on $X$ (see \cite{FO},\cite{LT},\cite{RT}). These integrals make sense and are independent of $J$ as $\ov{\mc{M}}_{n}(X,A)$ has a virtually orientable fundamental cycle without real codimension one boundary.
The existence of similar invariants for the moduli spaces and evaluation maps in (\ref{evaluation-maps}) is predicted by physicists (\cite{F1}, \cite{F2}, \cite{F3}, \cite{F4}), but there are obstacles to defining such invariants mathematically. In addition to the transversality issues (which are also present in the classical case), issues concerning orientability and codimension one boundary arise.

Whereas moduli spaces of closed curves have a canonical orientation induced by $J$, $\ov{\mc{M}}_{k,l}^{\disc}(X,L,\beta)$ is not necessarily orientable. Moreover, if it is orientable, there is no canonical orientation. If $L$ has a spin structure, then $\ov{\mc{M}}_{k,l}^{\disc}(X,L,\beta)$ is orientable and a choice of spin structure canonically  determines an orientation on $\ov{\mc{M}}_{k,l}^{\disc}(X,L,\beta)$; see \cite[Section 8]{FOOO}.

The moduli spaces $\ov{\mc{M}}_{k,l}^{\disc}(X,L,\beta)$ have two types of codimension one boundary; see Figure \ref{Fig:boundaryterms}. 
The first type, called \textsf{disc bubbling}, consists of maps from two discs with a boundary point in common. This boundary breaks into unions of components isomorphic to 
\begin{equation}\label{boundary-equ1}
\mc{M}_{1+k_1,l_1}^{\disc}(X,L,\beta_1) \times_{(\ev^B_1,\ev^B_1)} \mc{M}_{1+k_2,l_2}^{\disc}(X,L,\beta_2)/G,
\end{equation}
where 
$$k_1+k_2=k,\quad l_1+l_2=l,\quad \beta_1+\beta_2=\beta, \quad
G=\begin{cases}\Z_2,& \hbox{if}~k,l=0,~\beta_1=\beta_2;\\
\{1\},& \hbox{otherwise}.
\end{cases}$$
The second type, called \textsf{sphere bubbling}, appears only if $k=0$ and $\beta$ lies in the image of the natural homomorphism 
$j\colon H_2(X)\to H_2(X,L)$.
It consists of maps from $\P^1$ taking an extra marked point to $L$. This boundary is isomorphic to 
\begin{equation}\label{boundary-equ2}
 \bigsqcup_{\tilde\beta\in j^{-1}(\beta)} \mc{M}_{1+l}(X,\tilde{\beta})\times_{\ev_1} L.
\end{equation}
The boundary problem is present in nearly all cases. It has been overcome in a number of cases by either adding other terms to compensate for the effect of the boundary (\cite{W1}, \cite{W2}, \cite{F}) or by gluing boundary components to each other to get moduli spaces without boundary (\cite{S}, \cite{G}). None of these methods can address the issue of sphere bubbling; we overcome this issue in \cite{MF2}, by adding the contribution of real curves without fixed points.
\begin{figure}
\includegraphics[scale=.4]{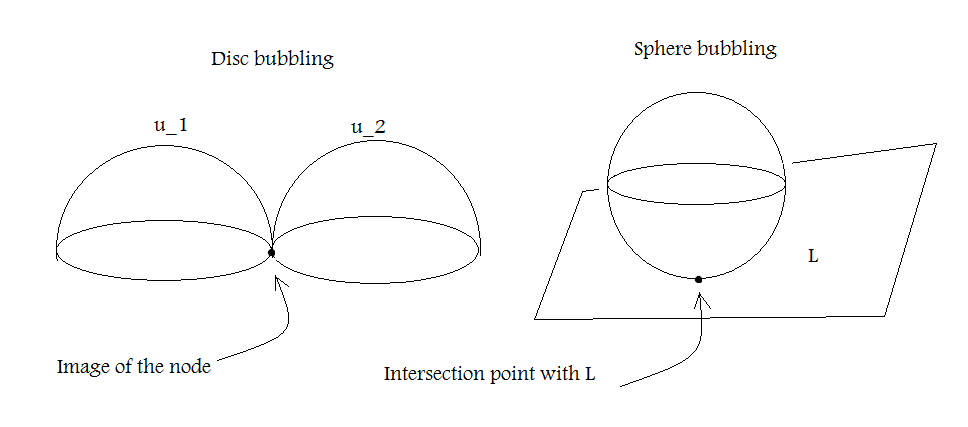}
\caption{The codimension one boundary in $\ov{\mc{M}}_{k,l}^{\disc}(X,L,\beta)$}\label{Fig:boundaryterms}
\end{figure}  

In \cite{S}, the disc bubbling problem is resolved by using the involution $\phi$ on $X$ to identify the disc bubbling boundary components and thus define a moduli space without such boundary. If the sphere bubbling does not happen, e.g.~when $\partial\beta \neq 0 \in H_1(X,L)$, the resulting moduli space is orientable and gives rise to invariants of $(X,\om,\phi)$. 

Invariants arising from the moduli spaces and evaluation maps (\ref{evaluation-maps}) are called \textsf{open} GW invariants. Such invariants have been defined in a number of settings by Liu \cite{Liu}, Welschinger \cite{W3},\cite{W4},\cite{W5}, Solomon \cite{S}, Fukaya \cite{F}, and  Georgieva \cite{G}.
Open GW counts for $(X,L)$, when $X$ is a Calabi-Yau threefold and $L\cong S^3$, are defined in \cite{F}.
These counts depend on the choice of almost complex structure via a wall-crossing formula, due to the sphere bubbling issue.
They are a priori real numbers, which are predicted  in \cite{F} and shown in this paper
to be rational; see \cite[Conjecture 8.1]{F} and Corollary~\ref{coro:rationality} below.

In this paper, we use degeneration techniques to get a better understanding of moduli spaces of $J$-holomorphic discs and to compute open GW invariants. If the Lagrangian $L$ is $S^n$, $\R\P^n$, or a Lens space of a certain type, there is a canonical Hamiltonian $S^1$-action in a Weinstein neighborhood of $L$. We then, via the symplectic cut corresponding to this $S^1$-action, degenerate the symplectic manifold $X$ to a nodal singular symplectic manifold, $X_+\cup_{D} X_{-}$, where $D$ is the intersection divisor, through a family of symplectic manifolds over a small disk, $\pi \colon\mc{X}\to\De$. Through this degeneration process $L$ goes into $X_-$ and the symplectic type of $X_-$ depends only on $L$. We then relate the moduli spaces in the smooth fibers to some fiber product of the moduli spaces in the singular fiber. As an application, we obtain the following result.

\begin{theorem}\label{emptiness}
Let $(X^{2n},\omega)$ be a symplectic manifold with $n\ge 3$ and $c_1(TX)=0$. If $L \subset X$ is a  Lagrangian submanifold 
diffeomorphic to $S^n$ and $E\in\R^+$, there is an open subset $U_E$ of the set $\mc{J}$ of all 
$(\om,L)$-compatible almost complex  structures such that the moduli space $\ov{\mc{M}}^{\disc}(X,L,J,\beta)$
is empty whenever $\om(\beta)<E$ and $J\in U_E $.
\end{theorem}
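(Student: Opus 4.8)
The plan is to reduce the statement to the construction of one suitable almost complex structure and then to build it by degenerating $(X,\om)$ near $L$, as in the rest of the paper. The reduction is soft, since emptiness of the disc moduli spaces of area below $E$ is an open condition on $J\in\mc J$: if for some $J_0$ we had $\ov{\mc M}^{\disc}(X,L,J_0,\beta)=\emptyset$ for every $\beta$ with $\om(\beta)<E$, but this failed on every $C^\infty$-neighbourhood of $J_0$, we could pick $J_k\to J_0$ together with $J_k$-holomorphic stable discs $u_k$ with boundary on $L$ and $\om(u_k)<E$; Gromov compactness for $J$-holomorphic discs with Lagrangian boundary would then yield a convergent subsequence whose limit is a $J_0$-holomorphic stable disc with boundary on $L$ and area below $E$, contradicting the choice of $J_0$. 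So it suffices, given $E$, to exhibit a single $J_0\in\mc J$ with $\ov{\mc M}^{\disc}(X,L,J_0,\beta)=\emptyset$ for all $\beta$ with $\om(\beta)<E$, and to take $U_E$ to be a $C^\infty$-neighbourhood of it.

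The input from $L\cong S^n$ with $n\ge 3$ is twofold. A Weinstein neighbourhood of $L$ is symplectomorphic to a disc cotangent bundle $(D^*_rS^n,d\la)$, where $\la$ is the canonical Liouville form and $\la|_L=0$, with contact-type boundary $S^*S^n$; the canonical Hamiltonian $S^1$-action here is the normalised geodesic flow, and its symplectic cut is exactly the cap $X_-$ of the degeneration, with $L\subset X_-$ and $D\cong S^*S^n/S^1$ a quadric. First, by Stokes' theorem, every $J$-holomorphic disc with boundary on $L$ whose image lies in this cotangent region has zero symplectic area, hence is constant. Second, $\pi_2(T^*S^n,S^n)=0$ (this is where $n\ge 3$ is used), so every continuous disc in $T^*S^n$ with boundary on $L$ is null-homotopic rel boundary; the job of the degeneration is to force a low-area $J$-holomorphic disc with boundary on $L$ into this cotangent region in the first place.

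Concretely, I would take $J_0=J_t$ to be a compatible almost complex structure on the smooth fibre $\mc X_t\cong(X,\om)$ of the symplectic-cut family $\pi\colon\mc X\to\De$, adapted to the degeneration and with $t$ small. The intended mechanism: as $t\to 0$ the disc moduli space of the smooth fibre is governed by the identification, developed elsewhere in the paper, with a fibre product of moduli of discs in $(X_-,L)$ and of spheres in $X_+$, matched along $D$. Thus if $\ov{\mc M}^{\disc}(X,L,J_t,\beta)\ne\emptyset$ for some $\beta\ne 0$ with $\om(\beta)<E$, then for $t$ small we obtain a limit configuration of total area $\le E$ built from discs in $X_-$ with boundary on $L$ and spheres in $X_\pm$, the area of the disc part being at most $E$. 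Since $H_2(X_-,L)$ is generated, for $n\ge 3$, by half a line class, which has positive area, one wants the area bound --- together with a suitable choice of the degeneration --- to force every disc component to be null-homotopic, hence (by Stokes, being holomorphic with boundary on $L$) constant and disjoint from $D$. The sphere components are then closed curves of area $<E$ in $X_+$ or in $X_-\setminus D=T^*S^n$; none lies non-trivially in $T^*S^n$ since $\pi_2(T^*S^n)=0$, and the ones in $X_+$ are ruled out using $c_1(TX)=0$ --- which makes every disc class have Maslov index $0$ and every sphere class have vanishing first Chern number --- by a dimension count sharpened by a monotonicity estimate, after a further perturbation of $J_0$ away from the Weinstein neighbourhood keeping it in $U_E$. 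The limit configuration is then entirely constant, contradicting $\om(\beta)>0$; and $\beta=0$ supports no stable disc. Hence $\ov{\mc M}^{\disc}(X,L,J_t,\beta)=\emptyset$ for all $\beta$ with $\om(\beta)<E$.

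The crux --- and where I expect the real difficulty --- is the area bookkeeping in the degeneration: one must confine the disc components of the limit to the cotangent region $T^*S^n$ for an arbitrary prescribed bound $E$. This is not automatic, since the cap $X_-$ itself contains holomorphic discs of small area (the half-line discs), so ruling them out of the limit for large $E$ requires finer features of the construction --- presumably taking the cap deep enough, using the precise normal-bundle data of $D$ in $X_+$, and exploiting that the matching sphere components on the $X_+$ side do not exist --- and it is exactly this point that the machinery of the paper is designed to handle. A secondary technical point is transversality: one wants the components of the limit configuration to be somewhere injective (or to work within the virtual formalism) so that positivity of area and the monotonicity inequality apply, and one must keep track of ghost components and of multiply covered ones.
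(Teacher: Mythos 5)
Your overall skeleton matches the paper --- reduce to producing one good $J_0$ by Gromov compactness, obtain it by pushing $t\to 0$ in the symplectic-cut family $\pi\colon\mc X\to\De$, and analyze the limit configuration in $\mc X_0=X_-\cup_D X_+$. But the mechanism you propose for getting the contradiction is wrong, and in fact you yourself flag the fatal gap. You try to confine the disc component of the limit to $X_-\setminus D\cong T^*S^n$ and argue it is constant via Stokes and $\pi_2(T^*S^n,S^n)=0$; the "area bookkeeping" needed to do this for an arbitrary energy bound $E$ is precisely what you admit you cannot do, and indeed it cannot be done: the disc components in the limit are in general \emph{not} constant and do intersect $D$. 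The paper never tries to trivialize the $X_-$ side.

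What the paper actually uses is the following pair of observations. First, in the smooth fiber each disc $u_i$ with boundary on $L$ must exit the Weinstein region (since the symplectic form is exact there, a nonconstant $J$-holomorphic disc cannot stay inside), so it meets the contact hypersurface $V_{a+\ep}$; passing to the limit, the nodal configuration in $\mc X_0$ therefore has at least one component mapping into $X_+$, not contained in $D$, and meeting $D$ at finitely many points. Second --- and this is the sole source of the contradiction --- the dimension count in $X_+$ is already lethal: using $c_1(TX_+)=(2-n)\,\tn{PD}_{X_+}[D]$ from Proposition~\ref{prop:main-space}(3) with $k=1$, the somewhere-injective genus-zero moduli space in class $A$ has $\dim^{\vir}_{\R}=2(n-3)-2(n-2)A\cdot D<0$ whenever $A\cdot D>0$ and $n\ge 3$, so it is empty for generic admissible $J$ on $\mc X$ by Sard--Smale (Lemma~\ref{empty}). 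That immediately rules out the required component in $X_+$ regardless of what happens on the $X_-$ side; no monotonicity estimate, no $\pi_2$ computation, and no confinement of the disc to the cotangent region are needed. You do mention the $X_+$ dimension count in passing, but treat it as a secondary device to be "sharpened by a monotonicity estimate"; it is actually the whole argument, and your attempt to build the proof around making the $X_-$ discs constant is a dead end.
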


This result is also stated in \cite[Corollary 4.3]{W6}.
Its proof in \cite{W6} involves degenerating the almost complex structure to a singular one obtained by stretching 
a neighborhood of the Lagrangian and studying the behavior of the moduli space in the limit. 
This stretching surgery appears in Symplectic Field Theory \cite{SFT} and can be performed near any Lagrangian manifold in any symplectic manifold, 
but the result is a non-compact manifold.  
By contrast, our techniques work only if there is a Hamiltonian $S^1$-action in $T^*L$,
but we get closed symplectic manifolds in the end.
We show that as we move toward certain exotic almost complex structures, the sphere bubbling happens for all $J$-holomorphic 
discs and they all disappear.

\begin{corollary}\label{coro:rationality}
The disc counts for $(X,L)$, where $(X,\om)$ is a Calabi-Yau threefold and $L \subset X$ is a  Lagrangian submanifold diffeomorphic to $S^3$,
defined by \cite{F} are rational numbers. 
\end{corollary}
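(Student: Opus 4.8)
The plan is to deduce the corollary from Theorem~\ref{emptiness} together with the wall-crossing formula of \cite{F} for these disc counts. Write $n_\beta=n_\beta(J,\mf{p})\in\R$ for Fukaya's invariant in the class $\beta\in H_2(X,L)/\sim$, where $\mf{p}$ denotes the auxiliary data (perturbations and bounding cochain) used in its construction. As recalled in the introduction, the dependence of $n_\beta$ on $(J,\mf{p})$ stems only from the sphere-bubbling boundary (\ref{boundary-equ2}), and is governed by a wall-crossing formula: along a generic path $(J_t,\mf{p}_t)_{t\in[0,1]}$ only finitely many walls are met, and
\[
n_\beta(J_1,\mf{p}_1)-n_\beta(J_0,\mf{p}_0)=\sum_{W}c_W\cdot n_{\delta_W}(J_W,\mf{p}_W),
\]
where at each wall $W$ a sphere bubbles off in a nonconstant class $\gamma_W\in H_2(X)$, leaving a residual disc class $\delta_W$ with $\om(\delta_W)=\om(\beta)-\om(\gamma_W)<\om(\beta)$, and $c_W$ is a rational number built from the genus-zero Gromov--Witten theory of $X$ --- rational because $X$ is a symplectic threefold. (When the degenerate configuration has no disc component, the factor $n_{\delta_W}$ is absent and the term is simply $c_W\in\Q$.)

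The one new ingredient is the following consequence of Theorem~\ref{emptiness}. Fix $\beta$ and pick a real number $E>\om(\beta)$. Since $X$ is Calabi--Yau we may apply Theorem~\ref{emptiness} with $n=3$ and $c_1(TX)=0$: it supplies a nonempty open set $U_E\subset\mc{J}$ such that $\ov{\mc{M}}^{\disc}(X,L,J,\beta')=\emptyset$ for every $J\in U_E$ and every class $\beta'$ with $\om(\beta')<E$. Choose any $J_0\in U_E$ and any compatible auxiliary data $\mf{p}_0$. Because the whole compactified moduli space is empty, there are no holomorphic discs --- and hence no sphere-bubble configurations --- to count or to perturb, so $n_{\beta'}(J_0,\mf{p}_0)=0$ for every $\beta'$ with $\om(\beta')\le\om(\beta)$; in particular $n_\beta(J_0,\mf{p}_0)=0\in\Q$.

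With these two facts in hand, the proof is an induction on the area $\om(\beta)$. The induction is well-founded: by Gromov compactness only finitely many classes of area below a fixed bound are represented by $J$-holomorphic discs, and $n_{\beta'}$ vanishes identically on the remaining classes, so the relevant values of $\om(\beta)$ form a discrete set bounded below. For the inductive step, fix $\beta$, assume that $n_{\beta'}(J',\mf{p}')\in\Q$ for every $\beta'$ with $\om(\beta')<\om(\beta)$ and every choice $(J',\mf{p}')$, and let $(J,\mf{p})$ be arbitrary. Choose $E>\om(\beta)$ and $J_0\in U_E$, $\mf{p}_0$ as above, so that $n_\beta(J_0,\mf{p}_0)=0$, join $(J_0,\mf{p}_0)$ to $(J,\mf{p})$ by a generic path, and apply the wall-crossing formula. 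Every term on the right is a rational $c_W$ times a disc count $n_{\delta_W}(J_W,\mf{p}_W)$ with $\om(\delta_W)<\om(\beta)$, hence rational by the inductive hypothesis; the sum is finite, so $n_\beta(J,\mf{p})\in\Q$. As $(J,\mf{p})$ and $\beta$ were arbitrary, every one of Fukaya's disc counts for $(X,L)$ is rational.

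I expect the only delicate point to be extracting from \cite{F} the wall-crossing formula in exactly this form --- that each wall contributes a rational number built from the Gromov--Witten theory of $X$ times a disc count in a class of strictly smaller area, and that only finitely many walls occur along a generic path. All of this is contained in the wall-crossing analysis of \cite{F}; the genuinely new input is Theorem~\ref{emptiness}, which for every energy bound $E$ exhibits a single parameter $(J_0,\mf{p}_0)$ at which all disc counts of area below $E$ vanish simultaneously, and this is precisely what anchors the induction.
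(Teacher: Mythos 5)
Your proposal is correct and takes essentially the same route as the paper: use Theorem~\ref{emptiness} to exhibit a parameter $J_0$ at which all disc counts of bounded area vanish, then bridge to an arbitrary $J$ via Fukaya's wall-crossing formula. The paper compresses the wall-crossing step into a one-line citation to \cite[Section~6]{F}, while you have (correctly) unwound it into an induction on area --- the well-foundedness of which is best justified by observing that each wall subtracts the area of a nonconstant sphere, bounded below by Gromov's minimal energy, rather than by the slightly loose discreteness claim you give.
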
 

This corollary, which confirms \cite[Conjecture 8.1]{F}, follows immediately from Theorem~\ref{emptiness}, since
the wall crossing changes the disc counts defined in \cite{F} by rational numbers; see \cite[Section 6]{F}. 

\begin{corollary}\label{coro:nondispla}
Let $(X^{2n},\omega)$ be a symplectic manifold with $n\ge 3$ and $c_1(TX)=0$. If $L \subset X$ is a  Lagrangian submanifold 
diffeomorphic to $S^n$, then $L$ is not displacable, i.e.~there exists no Hamiltonian isotopy $\psi_t\!:X\to X$ such that 
$\psi_1(X)\cap X=\emptyset$.
\end{corollary}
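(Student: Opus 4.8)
The plan is to deduce Corollary~\ref{coro:nondispla} from Theorem~\ref{emptiness} by a standard Floer-theoretic argument: a displaceable Lagrangian cannot support a disc of \emph{zero} symplectic area beyond the constant ones, but it must admit a nonconstant $J$-holomorphic disc of \emph{bounded} area for a generic $J$, and Theorem~\ref{emptiness} says there are none of small area for suitable $J$. More precisely, suppose toward a contradiction that $L\cong S^n$ is displaceable, so there is a Hamiltonian isotopy $\psi_t$ with $\psi_1(L)\cap L=\emptyset$. Let $E=E(\psi)$ be the Hofer energy (or displacement energy) of this isotopy; it is a finite positive real number depending only on $\psi$.

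First I would recall the classical nonexistence/existence dichotomy for holomorphic discs on displaceable Lagrangians. By the monotonicity-type estimate underlying Chekanov's and Oh's work (and in the form used by Gromov for $c_1=0$), if $L$ is displaceable with displacement energy $E$, then for \emph{every} $(\om,L)$-compatible $J$ there exists a nonconstant $J$-holomorphic disc $u\colon(D^2,S^1)\to(X,L)$ (or a $J$-holomorphic sphere through a point of $L$) with $0<\om([u])\le E$. The idea is the usual one: consider the Floer equation for the pair $(L,\psi_1(L))$ with the Hamiltonian perturbation generating $\psi$; since $L\cap\psi_1(L)=\emptyset$ there are no intersection points and hence the Floer complex is trivial, which forces the existence of a nonconstant holomorphic disc or sphere of energy at most $E$ by a degeneration/compactness argument (Gromov compactness as the Hamiltonian term is rescaled to zero). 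This is exactly the mechanism in Gromov's original proof that Lagrangians in $\C^n$ bound holomorphic discs, adapted to bounded geometry.

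Next I would feed this into Theorem~\ref{emptiness}. Apply the theorem with this fixed $E$: it produces an open subset $U_E\subset\mc{J}$ of $(\om,L)$-compatible almost complex structures such that $\ov{\mc M}^{\disc}(X,L,J,\beta)=\emptyset$ whenever $\om(\beta)<E$ and $J\in U_E$. Pick any $J\in U_E$ (the set is nonempty since it is a nonempty open subset of $\mc{J}$). In particular there is no nonconstant $J$-holomorphic disc with boundary on $L$ of area less than $E$; and since $c_1(TX)=0$ forces $\om>0$ on every nonconstant sphere class so spheres of area $<E$ through $L$ are likewise constrained (one can strengthen the statement or simply note that a sphere bubble would itself produce, after the symplectic cut degeneration, a disc contribution — this is precisely what the proof of Theorem~\ref{emptiness} exploits). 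Either way, no nonconstant $J$-holomorphic disc or sphere of symplectic area in $(0,E)$ with boundary/point on $L$ exists for $J\in U_E$.

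This contradicts the previous paragraph, so no such displacement $\psi$ can exist, proving the corollary. The main obstacle, and the only genuinely nontrivial input, is the first step: establishing that a displaceable Lagrangian must bound a nonconstant holomorphic disc (or a sphere meeting it) of area at most the displacement energy, in a symplectic manifold that is merely closed with $c_1=0$ rather than exact. For closed manifolds this requires the Hamiltonian Floer homology machinery for Lagrangian pairs together with the absence of a Maurer--Cartan/obstruction issue — but here $c_1=0$ and $L\cong S^n$ with $n\ge 3$ make $L$ monotone with minimal Maslov number $\ge n+1\ge 4$, so the relevant Floer homology is well-defined and the standard argument goes through without bubbling pathologies. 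If one prefers to avoid invoking Lagrangian Floer homology on closed manifolds, an alternative is to localize to a Weinstein neighborhood of $L$ and run Gromov's stretching argument there as in \cite{W6}; the bounded-area disc one extracts is then fed into Theorem~\ref{emptiness} in exactly the same way.
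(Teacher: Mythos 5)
Your proof is correct but takes a genuinely different route from the paper's, though both hinge on Theorem~\ref{emptiness}. The paper feeds the emptiness of the disc moduli spaces \emph{forward} into Floer theory: since there are no $J$-holomorphic discs on $(X,L)$ for suitable $J$, the Lagrangian Floer homology $HF^*(L,\psi_1(L))$ is defined without obstruction, and one computes $HF^*(L,L)\cong H^*(L)\neq 0$, whence non-displaceability follows from invariance of Floer homology under Hamiltonian isotopy. You instead argue by contradiction, importing the Chekanov--Oh displacement-energy estimate: a displaceable Lagrangian must, for \emph{every} compatible $J$, bound a nonconstant $J$-holomorphic disc or carry a nonconstant $J$-holomorphic sphere through it of area at most the displacement energy $E$; but Theorem~\ref{emptiness} (applied with, say, $E+1$ to turn the non-strict bound into a strict one) produces a $J$ for which $\ov{\mc{M}}^{\disc}(X,L,J,\beta)$ is empty for $\om(\beta)<E+1$, and the stable-map compactification absorbs the sphere case via the sphere-bubbling boundary stratum~(\ref{boundary-equ2}). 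Your route is arguably more elementary in that it avoids setting up Floer homology for the pair $(L,L)$ over the Novikov ring, but it makes the Chekanov--Oh theorem an external dependency; the paper's route is heavier but stays inside the references already cited and produces the stronger intermediate statement $HF^*(L,L)\cong H^*(L)$.

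One detail in your writeup is wrong, though it does not sink the argument: with $c_1(TX)=0$ and $L\cong S^n$, $n\ge 2$, the Maslov class $\mu_L\in H^2(X,L)$ \emph{vanishes}. Its image in $H^2(X)$ is $2c_1(TX)=0$, and since $H^1(S^n)=0$ the connecting map $H^1(L)\to H^2(X,L)$ has trivial image, so $\mu_L=0$. Consequently every disc class has Maslov index zero and $L$ is \emph{not} monotone; in particular the claim that ``$L$ is monotone with minimal Maslov number $\ge n+1$'' is false (you seem to be thinking of $\R\P^n\subset\C\P^n$, which is a different geometry). This matters only for your justification of why the Chekanov--Oh mechanism is available. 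Fortunately, that mechanism does not require monotonicity or a well-defined Floer differential: the existence of a bounded-energy disc or sphere on a displaceable Lagrangian is extracted by a Gromov-compactness/deformation argument that does not need $d^2=0$. So you should delete the monotonicity justification and cite the general (non-monotone) form of the displacement-energy estimate; with that repair the argument is sound and is a legitimate alternative to the proof in the paper.
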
 

This corollary is a special case of \cite[Theorem~H]{FOOO}, but our argument avoids the technical issues that are the focus
of~\cite{FOOO}.
Since there are no $J$-holomorphic discs in $(X,L)$, there is no difficulty in defining the Floer homology groups
for Lagrangians $L_1,L_2$, with either $L_1=L_2$ or $L_1\pitchfork L_2$, as described in \cite[Section 1.1]{FOOO},
or showing that they are preserved when either Lagrangian is deformed by a Hamiltonian isotopy. 
Thus, if $\psi_t\!:X\to X$ is any Hamiltonian isotopy such that $L\pitchfork \psi_1(L)$, then
$$HF^*(L,\psi_1(L)) \cong HF^*(L,L) \cong H^*(L),$$
which implies that $L$ is not displacable.

Let $\pi\colon\mc{X}\to \De$ be a family of symplectic manifolds over a disk $\De \subset \C$ obtained from the symplectic sum construction for $X_+\cup_D X_-$; see the paragraph preceding Theorem~\ref{emptiness}. If $L$ is $S^n$, $\R\P^n$, or a Lens space of a certain type and its Weinstein neighborhood used in the symplectic cut process is chosen appropriately, an antisymplectic involution~$\phi$ on $X$ such that $L=\Fix(\phi)$ induces an antisymplectic involution $\phi_{\mc{X}}$ on $\mc{X}$ covering the standard conjugation on $\De$ such that $\Fix(\phi_{\mc{X}})\cap X_+=\emptyset$; see Corollary~\ref{coro:main-coro}.
Furthermore, $(X_{-},\phi_{-})$ is a symplectic manifold with a real structure independent of $X$. For example, if $L\cong \R\P^n$, then $(X_{-},\om_-,\phi_{-})$ is symplectomorphic to $(\P^n,\omega_{\tn{FS}},\tau_n)$, where $\omega_{FS}$ is some multiple of the Fubini-Study form and $\tau_n$ is the standard complex conjugation; if $L\cong S^n$, then $(X_{-},\om_-\phi_{-})$ is symplectomorphic to $(Q^n,\omega_{FS},\tau_{n+1})$, where $Q^n\subset \P^{n+1}$ is a quadratic hypersurface given by a real equation.

By contrast with the spherical case of Theorem \ref{emptiness}, non-trivial open GW invariants do exist when $L$ is diffeomorphic to the real projective plane. For example, the odd-degree open invariants of the quintic threefold computed in \cite{PSW} are not zero.
In \cite{PSW}, equivariant localization is used to reduce the computation in the open case to the closed case. 
Our approach is different, but similar in flavor: we use degeneration to reduce many computations in the open case to the closed case. 

\begin{theorem}\label{open-closed}
Let $(X^6,\omega,\phi)$ be a symplectic manifold with $c_1(TX)=0$ and $L \cong \R\P^3$. 
For any equivalence class $\beta \in H_2(X,L)/\sim$ with $\partial{\beta}\neq 0 \in H^1(L)$, 
the open GW invariants $N_{\beta}^{\disc}$ are a universal linear combination of the classical GW invariants of a symplectic 6-fold $X_+$ with $c_1(TX_+)=0$ canonically constructed from $X$.
\end{theorem}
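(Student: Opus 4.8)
The plan is to run the symplectic cut degeneration of the Introduction and extract from it a degeneration formula for disc counts. By Corollary~\ref{coro:main-coro}, with the Weinstein neighborhood of $L\cong\R\P^3$ chosen appropriately, the antisymplectic involution $\phi$ extends to an involution $\phi_{\mc X}$ on a one-parameter family $\pi\colon\mc X\to\De$ whose general fibre is $(X,\om,\phi)$ and whose central fibre is $X_+\cup_D X_-$, where $X_-\cong(\P^3,\om_{\FS},\tau_3)$ with $L=\Fix(\tau_3)$, the divisor $D\subset X_-$ is a fixed homogeneous surface with no real points (the quadric $Q^2\cong\P^1\times\P^1$), $\Fix(\phi_{\mc X})\cap X_+=\emptyset$, and $c_1(TX_+)=0$; in particular $L\cap D=\emptyset$. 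Since $\partial\beta\neq0\in H_1(L)$, the class $\beta$ is not in the image of $H_2(X)\to H_2(X,L)$, so no sphere bubbling occurs in $\ov{\mc M}^{\disc}(X,L,\beta)$, and --- disc bubbling being cancelled by $\phi$ together with a spin structure on $L$ (note that $\R\P^3$ is parallelizable) --- the number $N_{\beta}^{\disc}$ is a genuine invariant in the sense of~\cite{S}.

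First I would establish the open analogue of the symplectic sum / degeneration formula along $\pi$. As the fibre parameter tends to $0$, a sequence of $J$-holomorphic discs in $(X,L)$ with $\om$-area bounded by $\om(\beta)$ subconverges to a stable disc over $X_+\cup_D X_-$ whose components are of three kinds: holomorphic discs in $(X_-,L)$ meeting $D$ only at interior points; spheres in $X_-$; and spheres in $X_+$; all glued along $D$ with matching tangency orders. Because $L\cap D=\emptyset$, every disc component lies in $X_-$; because $\Fix(\phi_{\mc X})\cap X_+=\emptyset$, the sphere components in $X_+$ carry no reality constraint, whereas the disc components and the sphere components in $X_-$ are organized by $\tau_3$ exactly as in the relative open theory of $(\P^3,\R\P^3,D)$; and $\partial\beta\neq0$ forces the disc part to meet $D$ in an odd total number of points. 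Conversely, gluing recovers all nearby discs in $X$. The substantive work here is the package underlying any such formula: a relative (Symplectic Field Theory type) compactness theorem, now with Lagrangian boundary; the matching gluing theorem; and, crucially, the compatibility of orientations, in that the spin orientation of the $(X_-,L)$ side, the $\phi$-quotient orientation of~\cite{S}, and the complex orientations of the $X_+$ and $D$ sides must be shown to assemble coherently across the neck. This orientation bookkeeping in the real/open setting is, I expect, the principal obstacle.

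Granting the formula, $N_{\beta}^{\disc}$ becomes a finite sum --- finiteness forced by the bound $\om(\beta)$, since every disc in $(X_-,L)$ and every sphere in $X_\pm$ has positive area --- of products of three kinds of factors: relative open invariants of the pair $(\P^3,\R\P^3)$ with respect to $D$ in the classes $(2k-1)[\mc D]$, where $[\mc D]$ is the class of a minimal disc, $\mu([\mc D])=4$, and the odd multiples are exactly the classes with nonzero boundary; combinatorial gluing factors along $D$; and relative Gromov--Witten invariants of $(X_+,D)$. The first two kinds of factors depend only on the discrete data of $\beta$ and on the canonical triple $(\P^3,\R\P^3,D)$, hence are universal constants; indeed the relevant conjugation-invariant rational curves in $\P^3$ and their halves are completely explicit. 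Finally, because $D$ is a homogeneous surface whose Gromov--Witten theory is known, the relative invariants of $(X_+,D)$ are determined by the absolute Gromov--Witten invariants of $X_+$ together with those (universal) invariants of $D$, via the relative/degeneration formula applied to $(X_+,D)$. Assembling these facts, $N_{\beta}^{\disc}$ is a universal linear combination of the classical Gromov--Witten invariants of $X_+$, and $c_1(TX_+)=0$ by construction. Besides the orientation issue above, the points that still require care are the verification that $c_1(TX_+)=0$ and that the relative-to-absolute reduction for $(X_+,D)$ applies for the classes in range.
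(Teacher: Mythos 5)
Your overall strategy matches the paper's: degenerate via the symplectic cut so that $X$ breaks into $X_-\cong\P^3$ (containing $L\cong\R\P^3$ with $D=Q^2$) and $X_+$ with $c_1(TX_+)=0$, compactify the disc moduli space over the family, observe that $\partial\beta\neq0$ kills sphere bubbling while the involution kills disc bubbling, and read off a sum formula over the central fiber. You also correctly locate the serious technical work in compactness, gluing, and real/open orientation compatibility. However, there are two gaps at the point where you pass from a sum of \emph{relative} contributions to the claimed universal linear combination of \emph{absolute} invariants of $X_+$.

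First, you have omitted the dimension argument that eliminates nontrivial tangency. The paper computes that for $\rho=(s_1,\dots,s_k)$ the virtual dimension of $\ov{\mc M}(X_+,D,\rho,\Gamma_+)$ is $k-\sum_{i=1}^{k}s_i$, which is strictly negative unless every $s_i=1$. This forces the only contributing tangency pattern to be $\rho_0=(1,\dots,1)$, and it is precisely in this simple-tangency, no-absolute-constraint situation that the relative invariant $N^{\tn{rel}}_{\Gamma_+}$ coincides with the absolute invariant $N_{\Gamma_+}$. Your proposed alternative --- reducing relative invariants of $(X_+,D)$ to absolute invariants of $X_+$ and of $D$ ``because the GW theory of $D$ is known'' --- is not a valid shortcut: in general the relative theory of $(X_+,D)$ is strictly finer than the absolute theories of $X_+$ and $D$; a relative-to-absolute reduction requires the full degeneration/descendant machinery (and the intermediate invariants of $\P(\mc N_D\oplus\C)$), and would not directly yield a ``universal linear combination of classical GW invariants of $X_+$'' as stated. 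The dimension argument is the step that makes the theorem true in the form stated, and it is missing from your proposal.

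Second, even after the fiber product over the central fiber is identified, it must be shown that the count factors as a product $\alpha^{\tn{rel},\disc}_{\Gamma_-}\cdot N^{\tn{rel}}_{\Gamma_+}$ of a universal $(\P^3,\R\P^3,Q)$ constant and an $X_+$-side invariant. This is the content of the paper's Lemma~\ref{lem:decomposition}: the image of the evaluation map from the $X_+$-side is a $\phi_+$-invariant $0$-chain in $D^k$, and one uses the involution and an explicit cobordism of incidence conditions to show that the $X_-$-side count is insensitive to which point of that chain is imposed, giving the factor $2^{d_0}\alpha^{\tn{rel},\disc}_{\Gamma_-}$. You gesture at ``combinatorial gluing factors along $D$'' but do not supply this decoupling, and it is not automatic --- the fiber product a priori entangles the two sides through the matching conditions on $D$.
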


This is proved in Section \ref{proofs}, where we derive an explicit formula relating the open GW invariants of $X$ and the classical invariants of $X_+$. Although we state these theorems for Calabi-Yau manifolds, our degeneration technique can be used for any manifold $X$, 
as long as $L$ is $S^n$, $\R\P^n$, or some other special Lens space; see Section~\ref{ch:surgery}. 

We now give a detailed version of the statement of Theorem 1.4. The manifold $X_+$ is obtained from $X$ by replacing a tubular
neighborhood~$U$ of $L$, which in this case is symplectomorphic~to a neighborhood of $L$ in
$$TL\cong \R\P^3\!\times\!\R^3\cong\P^3\!-\!Q^2,$$
with a tubular neighborhood~$U_+$ of $Q^2$ in the line bundle ${\mc{O}}_{\P^3}(-2)|_{Q^2}$. 
In particular, we replace the Lagrangian $L$ in $X$ by a symplectic divisor $D\cong Q^2$ in~$X_+$. 
Since $X_+$ is a Calabi-Yau threefold, the virtual dimension of $\ov{\mc{M}}_0(X_+,\beta_+)$ is~0 for every $\beta_+\!\in\!H_2(X_+)$;
we denote the corresponding GW-invariant, i.e.~the virtual degree of $\ov{\mc{M}}_0(X_+,\beta_+)$ by $N_{\beta_+}(X_+)$.
The exact sequence for the pair $(X_+,\bar{U}_+)$, excision, and homotopy give rise to a homomorphism
$$j_+\!:H_2(X_+)\ra H_2(X_+,\bar{U}_+)\cong H_2(X_+\!\setminus\!U_+,\partial\bar{U}_+) =H_2(X\!\setminus\!U,\partial\bar{U})\cong H_2(X,\bar{U})\cong H_2(X,L).$$
For each $\beta\!\in\!H_2(X,L)$ and $d\!\in\!\Z$, let
$$N_{\beta,d}(X_+)=\sum_{\begin{subarray}{c}\beta_+\in H_2(X_+)\\ j_+(\beta_+)=\beta,~\beta_+\cdot D=d\end{subarray}}\hspace{-.3in} N_{\beta_+}(X_+).$$
Similarly, we define
\begin{equation}\label{coeffnums_e}
\begin{split}
N_d(\P^3)&=\int_{\ov{\mc{M}}_d(\P^3,(d/2)\ell)}\ev_1^*(\tn{PD}_{\P^3}\tn{pt})\ldots\ev_d^*(\tn{PD}_{\P^3}\tn{pt}),\\
N_d^{\disc}(\P^3,\R\P^3)&=\int_{\ov{\mc{M}}_d^{\disc}(\P^3,\R\P^3,d(\ell/2))}\ev_1^*(\tn{PD}_{\P^3}\tn{pt})\ldots\ev_d^*(\tn{PD}_{\P^3}\tn{pt}),
\end{split}
\end{equation}
where $\ell\in H_2(\P^3)$ and $\ell/2\!\in\!H_2(\P^3,\R\P^3)$ are the standard generators
and $N_d(\P^3)\equiv0$ if $d$ is odd.
The numbers~(\ref{coeffnums_e}) can be computed using equivariant localization; the first set of numbers can also be computed via the recursion of \cite[Theorem~10.4]{RT}, and a similar recursion for the second set of numbers has been announced~\cite{So2}.
The formula of Theorem 1.4 is of the form
\begin{equation*}
N_{\beta}^{\disc}(X,L)=\sum_{|\Gamma|=\beta}
\frac{1}{\mf{Aut}(\Gamma)}N_{\Gamma}^{\disc}(\P^3,\R\P^3)N_{\Gamma}(X_+),
\end{equation*}
where the sum is over labeled bipartite rooted graphs $\Gamma\!=\!(\Gamma_-,\Gamma_+)$ such~that 
\begin{enumerate}[label=(\arabic*),leftmargin=*]
\item $\Gamma$ is connected and has no loops,
\item the root (distinguished vertex) $v_0$ is an element of the set $\Gamma_-$, and
\item each vertex of $\Gamma_+$ is decorated by an element $\beta_v$ of $H_2(X,L)$,
with all these labels  summing up to~$\beta$.
\end{enumerate}
The root $v_0$ corresponds to a $J$-holomorphic disc in $\P^3$ with boundary in $\R\P^3$, while each of the remaining vertices of~$\Gamma$ corresponds to a $J$-holomorphic sphere in $X_-$ or $X_+$, depending on whether the vertex lies in~$\Gamma_-$ or~$\Gamma_+$; see Figure~\ref{bipartite}.

\begin{figure}
\includegraphics[scale=.6]{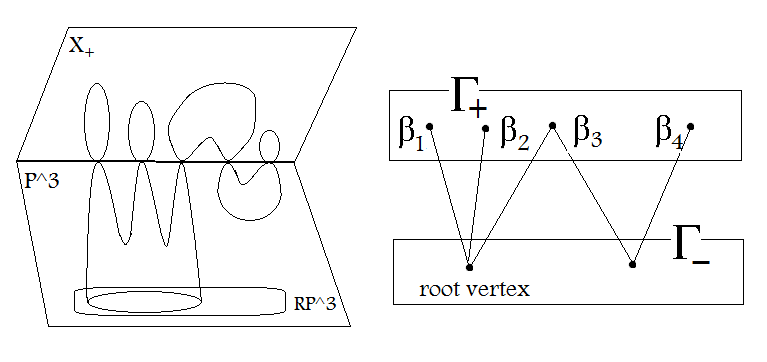}
\caption{A possible configuration of curves in central fiber and the corresponding $\Gamma$.}\label{bipartite}
\end{figure}

Each edge of~$\Gamma$ corresponds to an intersection point with $D\!\subset\!\P^3,X_+$ common to the components corresponding to the two vertices of the edge; the order of the intersection is~one. For a graph~$\Gamma$ as above  any vertex~$v$ of~$\Gamma$, let $\tn{E}_v(\Gamma)$ be the number of edges
of~$\Gamma$ leaving~$v$.
We denote by $\mf{Aut}(\Gamma)$ the order of the automorphism group of~$\Gamma$ and define
$$N_{\Gamma}^{\disc}(\P^3,\R\P^3)=N_{\tn{E}_{v_0}(\Gamma)}^{\disc}(\P^3,\R\P^3)\cdot \prod_{v\in\Gamma_-\setminus v_0}\!\!\!\!\!N_{\tn{E}_v(\Gamma)}(\P^3),\quad N_{\Gamma}(X_+)=\prod_{v\in\Gamma_+}N_{\beta_v,\tn{E}_v(\Gamma)}(X_+).$$

In Section \ref{ch:surgery}, we describe our degeneration setting, reviewing the symplectic cut and sum constructions along the way. We also show that every antisymplectic involution $\phi$ is ``standard'' in a properly chosen Weinstein neighborhood of $\Fix(\phi)$; see Lemma~\ref{deforming-involution0}.
In Section \ref{ch:holomorphicdiscs}, we first review
the definition of open GW invariants, then discuss some examples for
Calabi-Yau threefolds, and finally introduce the notion of relative open invariants.
In Section \ref{ch:degeneration}, we construct moduli spaces of discs over a family of symplectic
manifolds obtained from the symplectic sum procedure, paying special attention to the moduli space over singular fiber, and then prove Theorems \ref{emptiness} and \ref{open-closed}.\\

I would like to thank my advisor, Gang Tian, for his continuous encouragement and support. 
I also wish to thank Aleksey Zinger for his help with the exposition.

\section{A fibration corresponding to Lens spaces\label{ch:surgery}}

The symplectic cut procedure, introduced in \cite{L}, is a surgery technique for symplectic manifolds by means of which  we can decompose a given symplectic manifold into two pieces, each again a symplectic manifold. There is an inverse operation, the symplectic sum, that glues two manifolds into one.

Let $\De\subset \C$ denote a disk centered at the origin and $\De^*=\De\setminus 0$. If $\pi\colon \mc{X} \to \De$ is any map and $\la\in \De$, let $\mc{X}_{\la}\equiv \pi^{-1}(\la)$ be the fiber over $\la$. A \textsf{symplectic fibration} is a pair ($\pi\colon \mc{X} \to \De,\om_{\mc{X}}$) such that
$\pi$ is surjective, $(\mc{X},\om_{\mc{X}})$ is a symplectic manifold, $\mc{X} _{\la}$ is a symplectic submanifold of $(\mc{X},\om_{\mc{X}})$ for every $\la \in \De^*$, and $\mc{X}_{0}$ is a union of symplectic submanifolds of $(\mc{X},\om_{\mc{X}})$ meeting along smooth symplectic divisors. A \textsf{Lagrangian subfibration} of $(\pi\colon \mc{X} \to \De, \om_{\mc{X}})$ is a submanifold $\mc{L}\subset\mc{X}$ disjoint from the singular locus of~$\mc{X}_0$ such that $\pi(\mc{L})=\De$ and $\mc{L}_{\la}\subset\mc{X}_{\la}$ is a Lagrangian submanifold for every $\la\in\De$. Thus, $\mc{L}\cong \mc{L}_0\times\De$ as fibrations over $\De$ and $(\mc{X}_{\la},\mc{L}_{\la})$ is symplectically isotopic to $(\mc{X}_{\la'},\mc{L}_{\la'})$ for all $\la,\la'\in\De^*$.

An \textsf{admissible almost complex structure} on a symplectic fibration $(\pi\!:\mc{X}\to\De,\om_{\mc{X}})$ is an $\om_{\mc{X}}$-compatible almost complex structure on $\mc{X}$ which preserves $\ker d\pi$, restricts to an almost complex structure on the singular locus~$D$ of~$\mc{X}_0$, and satisfies
$$N_{J_{\mc{X}}}(u,v)\in T_xD \qquad\forall~u\in T_xD,~v\in T_x\mc{X}_0,~x\in D,$$
where $N_{J_{\mc{X}}}$ is the Nijenhuis tensor of  $J_{\mc{X}}$. We denote the set of all admissible almost complex structures on $\mc{X}$ by~$\mc{J}_{\mc{X}}$. A \textsf{real structure} on a symplectic fibration $(\pi\!:\mc{X}\to\De,\om_{\mc{X}})$ is an anti-symplectic involution $\phi_{\mc{X}}\!:\mc{X}\to\mc{X}$ covering the standard complex conjugation on~$\De$. A \textsf{$\phi_{\mc{X}}$-compatible admissible almost complex structure} on $(\pi\!:\mc{X}\to\De,\om_{\mc{X}})$ is an element $J_{\mc{X}}\in\mc{J}_{\mc{X}}$ such that $\phi_{\mc{X}}^*J_{\mc{X}}=-J_{\mc{X}}$.
We denote the set of such almost complex structures by $\mc{J}_{\phi_{\mc{X}}}$.

A \textsf{Lens space} is the quotient of $S^n$ by a free $\Z_k$-action by isometries. Every fixed-point free map $S^n\to S^n$ is homotopic to the antipodal map, which has degree $-1$ if $n$ is even. Thus, if $\Z_k$ acts freely on $S^n$ with $n$ even, then $k\le2$. If $n=2m\!-\!1$, a free action of $\Z_k$ on $S^n\subset\C^m$ by isometries is generated~by a map of the form
$$S^n\to S^n, \qquad (z_1,\ldots,z_m)\ra(\xi_1z_1,\ldots,\xi_m z_m),$$
for some primitive $k$-th roots $\xi_1,\ldots,\xi_m$ of 1. Any such action extends to an action~on 
\fontsize{10}{12}\selectfont
\begin{gather*}
Q^n\equiv\big\{[z_0,\ldots,z_{n+1}]\!\in\!\P^{n+1}\!:~z_0^2=\sum_{j=1}^{n+1}z_j^2\big\}
\supset Q^n_{\R}\equiv Q^n\cap \R\P^n \cong S^n,\\
[z_0,\ldots,z_{n+1}]\to
\begin{cases} [z_0,-z_1,\ldots,-z_{n+1}],\!\!\!&\hbox{if}\,k\!=\!2,\\
[z_0,\xi_1^{\R}z_1\!+\!\xi_1^{\mf{i}\R}z_2,\xi_1^{\R}z_2\!-\!\xi_1^{\mf{i}\R}z_1,\ldots,
\xi_m^{\R}z_n\!+\!\xi_m^{\mf{i}\R}z_{n+1},\xi_m^{\R}z_{n+1}\!-\!\xi_m^{\mf{i}\R}z_n],\!\!\!&
\hbox{if}\,n\!=\!2m\!-\!1,
\end{cases}
\end{gather*}
\normalsize
where $\xi_j^{\R}$ and $\xi_j^{i\R}$ are the real and imaginary parts of $\xi_j$, respectively.
This extension preserves the divisor $$D_n\equiv \big\{[z_0,\ldots,z_{n+1}]\!\in\!Q^n\!:~z_0\!=\!0\big\}\cong Q^{n-1}.$$
We call a Lens space $S^n/\Z_k$ \textsf{archetypal} if $k=1,2$ or if the induced action of $\Z_k$ on $Q^n$ is free.  
Furthermore, $Q^n_{\R}$ is the fixed-point locus of the restriction $\tau_n^Q$ to $Q^n$ of the standard involution 
$$\tau_{n+1}\!:\P^{n+1}\ra\P^{n+1},\qquad [z_0,\ldots,z_{n+1}]\ra[\bar{z}_0,\ldots,\bar{z}_{n+1}].$$
This restriction induces an anti-symplectic involution on the quotient $Q^n/\Z_k$, which we still denote by~$\tau_n^Q$. In this section,  we establish the following proposition, which is used in the reminder of the paper to study moduli spaces of $J$-holomorphic discs.

\begin{proposition}\label{prop:main-space}
Let $(X,\om)$ be a symplectic $n$-fold with a Lagrangian $L$ diffeomorphic to an archetypal Lens space $S^n/\Z_k$. 
There exists a symplectic fibration $\pi\!:\mc{X}\!\to\!\De$ with a Lagrangian subfibration~$\mc{L}$
and $\mc{X}_0=X_-\!\cup_D\!X_+$, where $X_-$ and $X_+$ are symplectic manifolds and $D=X_-\cap X_+$, so~that 
\begin{enumerate}[leftmargin=*]
\item for $\la\in\De^*$, $(\mc{X}_{\la},\om_{\mc{X}}|_{X_{\la}},\mc{L}_{\la})$ is symplectically isotopic to $(X,\om,L)$;
\item $\mc{L}_0\subset X_-$ and $(X_-,\om_{\mc{X}}|_{X_-},D,\mc{L}_0)$ is symplectomorphic to $$(Q^n/\Z_k,\om_{\FS},D_n/\Z_k,Q^n_{\R}/\Z_k);$$
\item $c_1(TX_+)=\frac{2k-\de_{k,2}-n}{k}\tn{PD}_{X_+}[D]$ if $c_1(TX)=0$.
\end{enumerate}
Moreover, the space $\mc{J}_{\mc{X}}$ is non-empty and path-connected. 

If in addition $L=\Fix(\phi)$ for a real structure $\phi$ on $(X,\om)$, the above symplectic fibration can be chosen so that it admits a real structure $\phi_{\mc{X}}$ with $\mc{L}\!=\!\Fix(\phi_{\mc{X}})$, $(X_{\la},\om_{\mc{X}}|_{X_{\la}}, \phi|_{X_{\la}})$ symplectically isotopic to $(X,\om,\phi)$ for every $\la\in\De^*$, and $(X_-,\om_{\mc{X}}|_{X_-},D,\phi_{\mc{X}}|_{X_-})$
symplectomorphic to $(Q^n/\Z_k,\om_{\FS},D_n/\Z_k,\tau_n^Q)$. In this case, the space $\mc{J}_{\phi_{\mc{X}}}$ is  non-empty and path-connected.
\end{proposition}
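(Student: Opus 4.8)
The plan is to reduce Proposition~\ref{prop:main-space} to a local statement near~$L$, build the fibration from a standard model attached to a Weinstein neighborhood, and finally track the real structure through this construction. First I would recall that $L$ is an archetypal Lens space $S^n/\Z_k$, so a Weinstein neighborhood of $L$ in $(X,\om)$ is symplectomorphic to a neighborhood of the zero section in $T^*L=(T^*S^n)/\Z_k$. The key geometric input is that $T^*S^n$ carries a Hamiltonian $S^1$-action (rotation in the cotangent fibers, i.e. the normalized geodesic flow away from the zero section, which compactifies to an honest circle action on the disk cotangent bundle with moment map the norm), and this action descends to $T^*L$ because the $\Z_k$-action commutes with it. Performing the symplectic cut of $X$ along a level set $\{\|p\| = \ep\}$ of this moment map at small $\ep$ produces $X_-$ and $X_+$ glued along $D$. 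The manifold $X_-$ is the "inner" cut piece containing~$L$, and because the cut is performed entirely inside the standard model $T^*L$, the symplectic type of $(X_-, \om|_{X_-}, D, L)$ is determined by that model alone: the symplectic cut of $(T^*S^n, \om_{\can})$ at level $\ep$ is $(Q^n, \ep'\,\om_{\FS})$ with divisor $D_n \cong Q^{n-1}$ and zero section $Q^n_\R$, and quotienting by $\Z_k$ gives part (2). Part (1) is automatic: the symplectic sum is the inverse of the cut, so the generic fiber $\mc{X}_\la$ for $\la\in\De^*$ is symplectomorphic to $X$ (and the $\Z_k$-quotiented zero section persists as a Lagrangian subfibration since it sits in the region untouched by the surgery, away from the neck).

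For part (3) I would compute $c_1$ of the cut piece $X_+$. The manifold $X_+$ is obtained by removing a tubular neighborhood of $L$ from $X$ and gluing in a tubular neighborhood of $D$ inside the normal bundle $N_{X_+}D$; since the normal bundle of $D_n$ in $Q^n$ is ${\mc O}_{Q^n}(2) / \Z_k$ (or rather $\mc{O}(2k-\de_{k,2})$-type twisting after the quotient), adjunction gives $c_1(TX_+) = c_1(TX_+|_D)$-contributions localized on~$D$. Concretely, $X$ and $X_+$ differ only over the neighborhood of $L$ versus $D$; outside, $c_1$ agrees, and a Mayer--Vietoris / excision argument together with the Euler class of the normal bundle of $D$ yields $c_1(TX_+) = \frac{2k - \de_{k,2} - n}{k}\,\tn{PD}_{X_+}[D]$ when $c_1(TX)=0$. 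This is a bookkeeping computation using the known degree of $N_{D_n}Q^n$ and the $\Z_k$-covering. The non-emptiness and path-connectedness of $\mc{J}_{\mc{X}}$ follows from the standard fact that admissible almost complex structures on a symplectic fibration form a non-empty contractible (hence connected) space: one builds $J_{\mc{X}}$ first on $D$, extends to $\mc{X}_0$ preserving the divisor and satisfying the Nijenhuis condition (which is an open condition compatible with the fibration structure near $D$ modeled on the normal crossing), then to all of $\mc{X}$, and convexity of the space of compatible structures on each piece, together with a partition-of-unity patching, gives connectedness.

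For the real-structure statement, the main work is Lemma~\ref{deforming-involution0} (asserted but not proved in the excerpt): any antisymplectic involution $\phi$ with $\Fix(\phi) = L$ is standard in a suitably chosen Weinstein neighborhood — i.e., conjugate to the canonical involution $(q,p)\mapsto(q,-p)$ on $T^*L$. Granting that, the circle action used for the cut satisfies $\phi^* (\text{action}) = (\text{inverse action})$ in these coordinates — the fiberwise involution $p\mapsto-p$ reverses the rotation — so the moment-map level set is $\phi$-invariant and $\phi$ descends to the cut $X_+$ and to $X_-$, and it extends to the symplectic sum family $\mc{X}$ covering the conjugation $\la\mapsto\bar\la$ on $\De$. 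The fixed locus $\mc{L} = \Fix(\phi_{\mc{X}})$ is the family of zero sections, which lies entirely in $X_-$ (the cut removes no part of the zero section), so $\Fix(\phi_{\mc{X}}) \cap X_+ = \emptyset$; and $(X_-,\phi_{\mc{X}}|_{X_-})$ is the $\Z_k$-quotient of the cut of $(T^*S^n, (q,p)\mapsto(q,-p))$, which is exactly $(Q^n/\Z_k, \tau^Q_n)$ since the cut of $T^*S^n$ under the fiber-reversing involution yields $Q^n$ with the standard conjugation $\tau_n^Q$ (the fixed locus $Q^n_\R = S^n$ being the zero section). The real structure $\phi_{\mc{X}}$-compatible admissible almost complex structures form a non-empty connected space by the same averaging-and-patching argument as before, now carried out $\phi_{\mc{X}}$-equivariantly on the fundamental domain. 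I expect the main obstacle to be Lemma~\ref{deforming-involution0}: deforming an arbitrary antisymplectic $\phi$ to the linear normal form requires first linearizing $\phi$ along $L$ (using that $d\phi|_L$ acts as $-\id$ on the conormal directions because $L$ is Lagrangian and $\phi$ is antisymplectic), then applying an equivariant Weinstein/Moser argument to straighten the symplectic form and the involution simultaneously; the circle-equivariance of the final neighborhood (needed so that the cut and the involution are compatible) is the delicate point and will have to be arranged by choosing the Weinstein embedding to intertwine the geodesic flow of a $\phi$-invariant metric on $L$ with the model action.
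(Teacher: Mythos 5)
Your proposal follows the paper's route essentially step for step: Weinstein neighborhood modeled on $T^*L = T^*S^n/\Z_k$, symplectic cut along a level set of the geodesic-flow moment map $h(\alpha)=|\alpha|$ (following Audin), reassembly via the Ionel--Parker symplectic sum, adjunction for $c_1(TX_+)$, and pushing the involution through via the standard-form statement Lemma~\ref{deforming-involution0} (your ``equivariant Weinstein/Moser''), with contractibility of $\mc{J}_{\mc{X}}$ and $\mc{J}_{\phi_{\mc{X}}}$ delegated to \cite{IP1},\cite{IP2}. The only slip is the intermediate claim that $\mc{N}_{D_n}^{Q^n}\cong\mc{O}_{Q^n}(2)$---since $D_n$ is a hyperplane section it is $\mc{O}(1)|_{D_n}$, and the paper instead derives item (3) from the ramification formula $K_{Q^n}=\pi^*K_{X_-}+(k_0-1)[Q^{n-1}]$ together with \cite[Lemma 2.4]{IP2}---but you arrive at the correct formula nonetheless and the overall argument is sound.
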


\begin{remark}
If $k=2$, then $L=\R\P^n$ and $(X_-,\om_-)\cong(\P^n, \om_{\FS})$. Moreover, if $n=3$, then $c_1(TX_+)=0$.
\end{remark}

\begin{remark}
If the action of $\Z_k$ on $Q^n$ is not free, then $Q^n/\Z_k$ may be an orbifold and we get a fibration $\mc{X}$ which is singular along $D$. By resolving the singularities, we can get a fibration similar to that of Proposition~\ref{prop:main-space}, but in this paper we are only interested in the $k=1,2$ cases.
\end{remark}

\begin{remark}                            
Let $\mc{SM}$ be the category of all projective varieties (of any dimension). 
Let $\mc{SM}^+$ be the free abelian group generated by $\mc{SM}$ and  $\mc{RS} \subset \mc{SM}^+ $ be the set of all double point relations
$$ [X]-[X_{-}]-[X_{+}]+[\P(\mc{N}^{X_{\pm}}_{D}\oplus \C)], $$
where $X$ is the symplectic sum of $(X_{\pm},D)$ and $\P(\mc{N}^{X_{\pm}}_{D}\oplus \C)$ is a $\P^1$ bundle over $D$. By \cite[Corollary 3]{LP}, $\mc{SM}^+ / \mc{RS}$ is generated by the product of projective spaces. Thus, theoretically a problem can be reduced to one for projective spaces if we know how things change in a symplectic sum/cut.
\end{remark}

In Section~\ref{sec:cut}, we review the symplectic cut procedure and apply it to a canonical $S^1$-action on $T^*S^n$. In Section~\ref{sec:sum}, we review the symplectic sum procedure, apply it to the result of the symplectic cut of the previous section, and build the symplectic fibration of Proposition~\ref{prop:main-space}. Finally, in Section~\ref{sec:involution}, we show that a real structure on the starting manifold induces an involution on the symplectic fibration of Proposition~\ref{prop:main-space}.


\subsection{Symplectic cut}\label{sec:cut}

Let $(X^{2n},\om)$ be a symplectic manifold. Let $U\subset X$ be an open subset with a Hamiltonian $S^1$-action
$$ S^1 \times U \to U,\qquad (e^{\mf{i}\theta},p)\to e^{\mf{i}\theta}p,$$
with moment map $h\colon U\to \R$ and fundamental vector field $\ze_h\in\Gamma(U;TU)$ so that 
$$\ze_h(p)=\frac{\td}{\td\theta}\left(e^{\mf{i}\theta}p\right)\!|_{\theta=0}\quad \forall p\in U, \qquad dh=\iota_{\ze_h}\om \equiv \om(\ze_h,\cdot).$$
For any $a\!\in\!\R$, let $V_a\!=\!h^{-1}(a)$.
The $S^1$-action on $U$ restricts to an action on~$V_a$ and induces an action~on 
$$V_a(\ep)\equiv \big\{(p,t)\!\in\!V_a\!\times\!\R\!:~|t|\!<\!\ep(p)\big\}\subset V_a\!\times\!\R$$
for every $S^1$-invariant smooth map  $\ep\colon V_a\to\R^+$,
by acting trivially on the second factor.

\begin{lemma}\label{tubular_lmm}
If $a\!\in\!\R$ is a regular value for $h$, there exist an $S^1$-invariant smooth map 
$\ep\!:V_a\!\to\R^+$, an $S^1$-invariant neighborhood $U'$ of $V_a$ in~$U$, and an $S^1$-equivariant diffeomorphism 
$$\Psi\!:V_a(\ep)\to U' \qquad\tn{s.t.}\quad \Psi|_{V_a\times0}=\pi_1,~~~h\circ\Psi=a+\pi_2,$$
where $\pi_1,\pi_2\!:V_a\!\times\!\R\!\to\!V_a,\R$ are the two projections.
If $V_a$ is compact, $\ep$ can be taken to be a constant $\ep$ and 
$U'=h^{-1}((a\!-\!\ep,a\!+\!\ep))$.
\end{lemma}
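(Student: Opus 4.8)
The plan is to produce $\Psi$ as the time flow of an $S^1$-invariant vector field $Y$ on a neighborhood of $V_a$ along which $h$ grows at unit rate; the flow then automatically intertwines the $S^1$-actions and transports $V_a$ diffeomorphically across the nearby level sets of $h$. First note that $h$ is constant on $S^1$-orbits, since $dh(\ze_h)=\om(\ze_h,\ze_h)=0$; hence $V_a=h^{-1}(a)$ is an $S^1$-invariant submanifold, and as $a$ is a regular value, $dh$ is nowhere zero on $V_a$, so it is nonzero on some $S^1$-invariant open neighborhood $W\subset U$ of $V_a$ (replace any such neighborhood by the union of its $S^1$-orbits). Averaging an arbitrary Riemannian metric on $W$ over the compact group $S^1$ gives an $S^1$-invariant metric $g$, and I set
\[ Y=\frac{\nabla^g h}{|\nabla^g h|_g^2}\qquad\text{on }W. \]
Then $Y$ is $S^1$-invariant, because $h$ and $g$ are, and $dh(Y)=g(\nabla^g h,Y)\equiv 1$.

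Next, let $\Phi^Y$ be the local flow of $Y$ and put $\Psi(p,t)=\Phi^Y_t(p)$. Since $Y$ is $S^1$-invariant, its flow commutes with the action, so the function assigning to $p\in V_a$ the supremum of times for which $\Phi^Y_\bullet(p)$ is defined and stays in $W$ is $S^1$-invariant, positive, and lower semicontinuous; I choose a smooth $S^1$-invariant $\ep\colon V_a\to\R^+$ strictly below it (minorize by a smooth positive function and then average over $S^1$). With this $\ep$, the map $\Psi$ is defined on $V_a(\ep)$ with image in $W\subset U$. From $dh(Y)\equiv 1$ and $h|_{V_a}\equiv a$ one gets $h\circ\Psi=a+\pi_2$; in particular $\Psi(\cdot,t)$ maps $V_a$ into $h^{-1}(a+t)$, so $d\Psi$ sends $TV_a$ into $\ker dh$ and $\partial_t$ to $Y\notin\ker dh$, whence $d\Psi$ is an isomorphism and $\Psi$ is a local diffeomorphism. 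It is injective: $\Psi(p,t)=\Psi(q,s)$ forces $t=s$ upon applying $h$, and then $p=q$ since $\Phi^Y_t$ is injective. Therefore $\Psi$ is a diffeomorphism onto the open set $U':=\Psi(V_a(\ep))$, which is $S^1$-invariant because $V_a(\ep)$ is and $\Psi$ is $S^1$-equivariant, the latter from $\Phi^Y_t(e^{\mf{i}\theta}p)=e^{\mf{i}\theta}\Phi^Y_t(p)$. Finally $\Psi|_{V_a\times 0}=\Phi^Y_0=\id=\pi_1$, so all the asserted properties hold. If $V_a$ is compact, the maximal existence time is bounded below by a positive constant, so $\ep$ may be taken to be that constant; after replacing $U$ by $W$ (equivalently, by the union of these flow lines through $V_a$), every point with $|h-a|<\ep$ lies on exactly one such flow line, giving $U'=h^{-1}((a-\ep,a+\ep))$.

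The step I expect to require the most care is the construction of $\ep$ in the non-compact case: it must be smooth, positive, $S^1$-invariant, and small enough that $\Phi^Y$ stays defined inside $U$, all at once, which is exactly what the lower-semicontinuity-plus-averaging argument delivers. The identification $U'=h^{-1}((a-\ep,a+\ep))$ in the compact case is the other point to watch, but it reduces to the observation that, once $U$ is shrunk to a neighborhood of the compact regular level set $V_a$, every level $h^{-1}(a+t)$ with $|t|<\ep$ is swept out by the flow of $Y$ emanating from $V_a$. Everything else is the standard flow-of-a-gradient-like-field argument, rendered equivariant by the metric averaging.
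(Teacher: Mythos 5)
Your proof is correct and essentially the same as the paper's. The paper takes $J$ to be an $S^1$-invariant $\om$-compatible almost complex structure and flows along $\xi_h=\frac{1}{g_J(\ze_h,\ze_h)}J\ze_h$, which (since $\nabla^{g_J}h=J\ze_h$ and $|J\ze_h|^2_{g_J}=g_J(\ze_h,\ze_h)$) is exactly the normalized gradient $\nabla^{g_J}h/|\nabla^{g_J}h|^2$; you do the same with an arbitrary averaged $S^1$-invariant metric $g$ in place of $g_J$, which is a cosmetic simplification. The only organizational difference is that the paper invokes the Inverse Function Theorem to produce $\ep$ and the diffeomorphism simultaneously, while you first bound the existence time by a smooth $S^1$-invariant $\ep$ and then verify local diffeomorphism plus injectivity by hand; both yield the same conclusion.
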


\begin{proof}
Let $J$ be an $\om$-compatible $S^1$-invariant almost complex structure and 
$$g_J(\cdot,\cdot)\equiv\om(\cdot,J\cdot)$$ be the corresponding metric.\footnote{Such a $J$ can be 
obtained by first averaging any $\om$-compatible metric $g$ over the $S^1$-action.}
By shrinking~$U$ if necessary, we can assume that $\ze_h$ does not vanish on~$U$.
Let $\Phi\!:W\!\to\!U$ be the flow of the vector field
$$\xi_h=\frac{1}{g_J(\ze_h,\ze_h)}J\ze_h\in\Gamma(U;TU);$$
see \cite[Theorem~1.48]{W}. In particular, $W$ is an open neighborhood of $U\!\times\!0$ in $U\!\times\!\R$.
Since $\xi_h$ is $S^1$-equivariant, $W\!\subset\!U\!\times\!\R$ is preserved 
by the $S^1$-action on $U\!\times\!\R$ and $\Phi$ is $S^1$-equivariant.
Since
$$\frac{\td}{\td t}h\big(\Phi(x,t)\big) =\td_{\Phi(x,t)}h\big(\xi_h(\Phi(x,t))\big)
=\om\big(\ze_h(\Phi(x,t)),\xi_h(\Phi(x,t))\big)=1,$$
$h(\Phi(x,t))=h(x)+t$.
Let
$$W_a=\big\{(x,t)\!\in\!W\!:\,x\!\in\!V_a\big\}.$$
Since 
$$\frac{\td}{\td t}\Phi(x,t)\Big|_{t=0}\not\in \ker\td_xh=T_xV_a\qquad\forall\,x\!\in\!V_a,$$
the differential
$$\td_{(x,0)}\Phi|_{W_a}\!: T_xV_a\!\oplus\!\R \to T_xU$$ 
is an isomorphism for all $x\!\in\!V_a$. 
Thus, by the Inverse Function Theorem, there exists a smooth function
$\ep\!:V_a\!\to\!\R^+$, which can be chosen to be $S^1$-invariant, so that
$\Phi|_{W_a}$ restricts to a diffeomorphism from $V_a(\ep)$ onto its image~$U'$.
\end{proof}

\noindent
Extending the Hamiltonian $S^1$-action on $U$ to $(\tilde{U}\!\equiv\!U\!\times\!\C,\om\!\oplus\!\om_0)$
by $e^{\pm\mf{i}\theta}$ in the second factor, let
$$h_{\pm}\!:U\!\times\!\C\to\R, \qquad (x,z)\to h(x)\mp\frac{1}{2}|z|^2,$$
denote its moment map and define 
$$V_{\pm;a}=h_{\pm}^{-1}(a), \qquad D_a=(V_{\pm;a}\cap U\!\times\!0)/S^1\subset V_{\pm;a}/S^1.$$

\begin{lemma}
If $a\!\in\!\R$ is a regular value for $h$ and $S^1$ acts freely on $V_a$,
$D_a$ is a symplectic submanifold of  $V_{\pm;a}/S^1$ with symplectic normal
bundle isomorphic~to
$$L_{\pm}=V_a\times_{S^1}\C, \qquad (x,z)\sim\big(e^{\mf{i}\theta}x,e^{\pm\mf{i}\theta}z\big).$$
\end{lemma}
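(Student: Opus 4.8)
The plan is to identify $V_{\pm;a}/S^1$ with the Marsden--Weinstein reduction of $(\tilde U,\om\oplus\om_0)$ at the moment-map level $a$ of $h_\pm$, and then to recognize $D_a$ inside it as the image of the $S^1$-invariant submanifold $V_a\times0$, i.e.\ as the ordinary symplectic reduction of $(X,\om)$ at level~$a$, with the normal directions recording the weight of the $S^1$-action on the $\C$-factor.

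First I would check that $a$ is a regular value of $h_\pm$ along $V_{\pm;a}$ and that $S^1$ acts freely there. At a point $(x,z)\in V_{\pm;a}$ with $z\neq0$ the differential of $(x,z)\mapsto\mp\frac{1}{2}|z|^2$ already surjects onto $\R$, so $d(h_\pm)_{(x,z)}$ is onto, and the $S^1$-action is free because $e^{\pm\mf{i}\theta}z=z$ forces $\theta\in2\pi\Z$. At a point $(x,0)$ one has $h(x)=a$, hence $d(h_\pm)_{(x,0)}=dh_x$ is onto since $a$ is a regular value of $h$, and the stabilizer of $(x,0)$ coincides with the stabilizer of $x\in V_a$, which is trivial by hypothesis. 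Thus $V_{\pm;a}$ is a smooth hypersurface in $\tilde U$ on which $S^1$ acts freely (hence properly), so $V_{\pm;a}/S^1$ is a manifold carrying the reduced symplectic form $\bar\om_\pm$ determined by $\pi^*\bar\om_\pm=\iota^*(\om\oplus\om_0)$, where $\iota\colon V_{\pm;a}\hookrightarrow\tilde U$ is the inclusion and $\pi\colon V_{\pm;a}\to V_{\pm;a}/S^1$ the quotient map.

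Next, $V_a\times0=V_{\pm;a}\cap(U\times0)$ is a closed $S^1$-invariant submanifold of $V_{\pm;a}$, so $D_a=\pi(V_a\times0)$ is a closed embedded submanifold of $V_{\pm;a}/S^1$, canonically identified with $V_a/S^1$. Pulling back the identity $\pi^*\bar\om_\pm=\iota^*(\om\oplus\om_0)$ to $V_a\times0$ yields $\om|_{V_a}$, since $\om_0$ vanishes on $\{0\}\subset\C$; hence $\bar\om_\pm|_{D_a}$ is the reduced form of $(X,\om)$ at level $a$, which is nondegenerate by Marsden--Weinstein. Therefore $D_a$ is a symplectic submanifold of $(V_{\pm;a}/S^1,\bar\om_\pm)$.

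Finally I would identify the symplectic normal bundle. At $(x,0)$ with $x\in V_a$ one computes $T_{(x,0)}V_{\pm;a}=\ker(dh_\pm)_{(x,0)}=T_xV_a\oplus\C$, while the orbit direction is $\R\,\ze_h(x)\subset T_xV_a\oplus0$; hence $T_{[x,0]}(V_{\pm;a}/S^1)=T_{[x,0]}D_a\oplus\C$. For $w\in\C$ and $v\in T_xV_a$ one has $(\om\oplus\om_0)\big((0,w),(v,0)\big)=\om(0,v)+\om_0(w,0)=0$, so the $\C$-summand is $\bar\om_\pm$-orthogonal to $TD_a$; being $2$-dimensional it is exactly the $\bar\om_\pm$-orthogonal complement of $TD_a$, i.e.\ the symplectic normal bundle of $D_a$, with fiberwise symplectic form $\om_0$. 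On the level of bundles, the normal bundle of $V_a\times0$ in $V_{\pm;a}$ is the trivial bundle $V_a\times\C$ on which $S^1$ acts by $(x,z)\mapsto(e^{\mf{i}\theta}x,e^{\pm\mf{i}\theta}z)$, so passing to the $S^1$-quotient the normal bundle of $D_a$ in $V_{\pm;a}/S^1$ is $V_a\times_{S^1}\C$ with $(x,z)\sim(e^{\mf{i}\theta}x,e^{\pm\mf{i}\theta}z)$, which is precisely $L_{\pm}$. The only step needing care is tracking the $S^1$-weight on the $\C$-factor---the sign in $e^{\pm\mf{i}\theta}$---since this is exactly what distinguishes $L_{+}$ from $L_{-}$; everything else is a standard application of symplectic reduction and bookkeeping of tangent spaces.
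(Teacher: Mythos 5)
Your argument is correct and reaches the same conclusion, but it diverges from the paper's in how the symplectic normal bundle is identified. The paper also invokes Marsden--Weinstein for the two reduced forms $\om_\pm$ on $V_{\pm;a}/S^1$ and $\om_\pm'$ on $D_a$, but then builds an explicit tubular-neighborhood diffeomorphism $\widetilde\Psi$ from a disc bundle in $L_\pm$ onto a neighborhood of $D_a$ in $V_{\pm;a}/S^1$, using the $S^1$-equivariant flow $\Psi$ constructed in Lemma~\ref{tubular_lmm}, and checks that $\widetilde\Psi^*\om_\pm$ restricts to $\om_0$ on the fibers at points of $D_a$. You instead perform a purely linear-algebraic computation at a point $(x,0)\in V_a\times 0$: you identify $T_{(x,0)}V_{\pm;a}=T_xV_a\oplus\C$, note that the orbit direction lies in $T_xV_a\oplus 0$, observe that the $\C$-summand is $(\om\oplus\om_0)$-orthogonal to $T_xV_a\oplus 0$ and carries the form $\om_0$, and conclude that after quotienting it is exactly $TD_a^{\bar\om_\pm}$; tracking the $S^1$-weight on the $\C$-factor then yields $L_\pm$ as an $S^1$-quotient of the trivial normal bundle $V_a\times\C$. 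Your version is more elementary and self-contained (it does not rely on Lemma~\ref{tubular_lmm}), while the paper's version simultaneously produces an explicit symplectic normal form for a neighborhood of $D_a$, which the paper later reuses in the adjacent lemmas. The preliminary checks you include --- regularity of $h_\pm$ at level $a$ and freeness of the $S^1$-action on $V_{\pm;a}$ --- are a worthwhile addition that the paper leaves implicit in its appeal to Marsden--Weinstein.
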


\begin{proof} By \cite[Theorem~1]{MW}, there are unique symplectic forms $\om_{\pm}$
and $\om_{\pm}'$ on $V_{\pm;a}/S^1$ and $D_a$, respectively, characterized by
$$\pi_{\pm}^*\om_{\pm}=(\om\!\oplus\!\om_0)\big|_{TV_{\pm;a}}, \qquad
\pi_{\pm}'^*\om_{\pm}'=(\om\!\oplus\!0)|_{TV_{a}}=(\om\oplus\om_0)\big|_{TV_{a}},$$
where $\pi_{\pm}\!:V_{\pm;a}\to V_{\pm;a}/S^1$ and
$\pi_{\pm}'\!:V_{\pm;a}\cap U\!\times\!0\to D_a$ are the quotient maps.
Thus, $D_a$ is a symplectic submanifold of $V_{\pm;a}/S^1$.
With $\ep$ and $\Psi$ as in Lemma~\ref{tubular_lmm}, the map
$$\widetilde\Psi\!:\big\{[x,z]\!\in\!L_{\pm}\!:\,|z|^2\!<\!2\ep(x)\big\}\to \big(V_{\pm;a}\cap V_a(\ep)\!\times\!\C\big)/S^1,$$
$$ [x,z]\to \big[\Psi(x,\pm|z|^2/2),z\big]$$
is a diffeomorphism such that 
$$\{\widetilde\Psi^*\om_{\pm}\}(w_1,w_2)=\om_0(w_1,w_2)\qquad\forall~w_1,w_2\in\C\subset T_xL_{\pm},~x\in D_a.$$
This implies the claim.
\end{proof}

\noindent
Extending the Hamiltonian $S^1$-action on $U$ to $(\tilde{U}\!\equiv\!U\!\times\!\C\!\times\!,\om\!\oplus\!\om_0\!\oplus\!\om)$
by $e^{\mf{i}\theta}$ in the second factor and $e^{-\mf{i}\theta}$ in the third factor, 
let
$$\tilde{h}\!:U\!\times\!\C\!\times\!\C\to\R, \qquad (x,z_+,z_-)\to h(x)-\frac{1}{2}|z_+|^2+\frac{1}{2}|z_-|^2,$$
denote its moment map.

\begin{lemma}
If $a\!\in\!\R$ is a regular value for $h$ and $S^1$ acts freely on $V_a$,
$D_a$ is a symplectic submanifold of  $\tilde{h}^{-1}(a)/S^1$ with symplectic normal
bundle isomorphic~to $L_+\!\oplus\!L_-$.
\end{lemma}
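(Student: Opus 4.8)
The plan is to repeat the argument of the previous lemma, now carrying the two extra $\C$-factors at once. First I would invoke the symplectic reduction theorem \cite[Theorem~1]{MW}. Shrinking $U$ so that $dh$ is nowhere zero on $U$ (as in the proof of Lemma~\ref{tubular_lmm}), the map $\tilde h$ is a submersion, so $a$ is a regular value of $\tilde h$; and $S^1$ acts freely on $\tilde h^{-1}(a)$, since at a point $(x,z_+,z_-)$ with $(z_+,z_-)\neq0$ a stabilizer element must fix the nonzero $z_+$ or $z_-$ and hence be trivial, while a point with $z_+\!=\!z_-\!=\!0$ lies in $V_a$, on which $S^1$ acts freely by hypothesis. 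Reduction then yields a unique symplectic form $\tilde\om$ on $\tilde h^{-1}(a)/S^1$ with $\tilde\pi^*\tilde\om=(\om\!\oplus\!\om_0\!\oplus\!\om_0)|_{T\tilde h^{-1}(a)}$, where $\tilde\pi$ is the quotient map. Restricting to the slice $z_+\!=\!z_-\!=\!0$ identifies $\tilde h^{-1}(a)\cap(U\!\times\!0\!\times\!0)$ with $V_a$ and $D_a$ with $V_a/S^1$ carrying the form $\om'$ of the previous lemma; since $(\om\!\oplus\!\om_0\!\oplus\!\om_0)|_{TV_a}=\om|_{TV_a}$, the inclusion $D_a\hookrightarrow\tilde h^{-1}(a)/S^1$ pulls $\tilde\om$ back to $\om'$, so $D_a$ is a symplectic submanifold.

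Next I would build an explicit tubular neighborhood. With $\ep\!:\!V_a\!\to\!\R^+$ and $\Psi$ as in Lemma~\ref{tubular_lmm}, define, on the disc subbundle $\{[x,z_+,z_-]\in L_+\!\oplus\!L_-:\;|z_+|^2\!+\!|z_-|^2<2\ep(x)\}$,
$$\widehat\Psi\big([x,z_+,z_-]\big)=\Big[\,\Psi\big(x,\tfrac12(|z_+|^2-|z_-|^2)\big),\,z_+,\,z_-\,\Big].$$
The identity $h\circ\Psi=a+\pi_2$ shows the image lies in $\tilde h^{-1}(a)/S^1$, and $|\tfrac12(|z_+|^2-|z_-|^2)|<\ep(x)$ makes the formula well defined. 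Since $L_+$ and $L_-$ carry the weights $e^{+\mf{i}\theta}$ and $e^{-\mf{i}\theta}$ matching the actions on the second and third $\C$-factors, $\widehat\Psi$ is $S^1$-equivariant and descends; and because $\Psi|_{V_a\times0}=\pi_1$, one checks (exactly as with $\widetilde\Psi$ in the previous lemma) that $\widehat\Psi$ is a diffeomorphism onto an open neighborhood of $D_a$.

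It remains to identify the symplectic normal bundle, i.e.\ to compute $\widehat\Psi^*\tilde\om$ on the fiber directions $\C\!\oplus\!\C\subset T_{[x,0,0]}(L_+\!\oplus\!L_-)$ along the zero section $D_a$. Here all mixed terms drop out: the first slot of $\widehat\Psi$ depends on the fiber coordinates only through the quadratic $\tfrac12(|z_+|^2-|z_-|^2)$, whose differential vanishes at $z_+\!=\!z_-\!=\!0$, so the differential of $\widehat\Psi$ at a zero-section point splits as (a lift into $TV_a$ of $TD_a$)~$\oplus$~(the $z_+$- and $z_-$-directions of $T\tilde U$). Pairing under $\om\!\oplus\!\om_0\!\oplus\!\om_0$ is then block diagonal, and the restriction to the two $\C$-fibers is $\om_0\!\oplus\!\om_0$. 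The asymmetry in $\tilde h$ (its factors enter as $-\tfrac12|z_+|^2$ and $+\tfrac12|z_-|^2$) does not affect this, since $\tilde\om$ is characterized by the ambient form $\om\!\oplus\!\om_0\!\oplus\!\om_0$, in which the $\om_0$ on the third factor is the standard \emph{positive} form; the weight $e^{-\mf{i}\theta}$ there enters only through the twisting that defines $L_-$. Hence the symplectic normal bundle of $D_a$ in $\tilde h^{-1}(a)/S^1$ is $(L_+\!\oplus\!L_-,\om_0\!\oplus\!\om_0)$.

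The step I expect to be the main obstacle is the final form computation — keeping track of the horizontal/vertical splitting of $T\tilde h^{-1}(a)$ under the quotient and verifying that the cross terms between the $U$-directions coming from $\Psi$ and the $z_\pm$-directions genuinely vanish along $D_a$. This is exactly the mechanism that made $\widetilde\Psi$ work in the previous lemma, now with two fiber factors and a little extra sign bookkeeping; no essentially new idea is required.
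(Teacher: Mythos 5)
Your proof follows the same route as the paper's: invoke Marsden–Weinstein reduction \cite[Theorem~1]{MW} for both $\tilde h^{-1}(a)/S^1$ and $D_a=V_a/S^1$, build the equivariant tubular-neighborhood map $[x,z_+,z_-]\mapsto[\Psi(x,\tfrac12(|z_+|^2-|z_-|^2)),z_+,z_-]$ from the flow map $\Psi$ of Lemma~\ref{tubular_lmm}, and check that the pullback of $\tilde\om$ to the fiber directions along the zero section is $\om_0\oplus\om_0$, using that the differential of the quadratic $\tfrac12(|z_+|^2-|z_-|^2)$ vanishes there so the ambient block-diagonal form contributes no cross-terms. The only cosmetic differences are that you restrict to the disc sub-bundle $|z_+|^2+|z_-|^2<2\ep(x)$ rather than the larger set $\big||z_+|^2-|z_-|^2\big|<2\ep(x)$ used in the paper (either works, since only a neighborhood of the zero section is needed), and you spell out the regular-value and free-action hypotheses that the paper leaves implicit.
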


\begin{proof}
By \cite[Theorem~1]{MW}, there are unique symplectic forms $\tilde\om$
and $\om'$ on $\tilde{h}^{-1}(a)/S^1$ and $D_a$, respectively, characterized by
$$\tilde\pi^*\tilde\om=(\om\oplus\om_0\oplus\om_0)\big|_{T(\tilde{h}^{-1}(a))}, \qquad \pi'^*\om'=(\om\oplus0\oplus0)|_{TV_a}
=(\om\!\oplus\om_0\oplus\om_0)\big|_{TV_a},$$
where $\tilde\pi\!:\tilde{V}_a=\tilde{h}^{-1}(a)\to\tilde{h}^{-1}(a)/S^1$ and
$\pi'\!:\tilde{h}^{-1}(a)\cap U\!\times\!0\!\times\!0\to D_a$ are the quotient maps.
Thus, $D_a$ is a symplectic submanifold of $\tilde{h}^{-1}(a)/S^1$.
With $\ep$ and $\Psi$ as in Lemma~\ref{tubular_lmm}, the map
\begin{gather*}
\widetilde\Psi\!:\big\{[x,z_+,z_-]\!\in\!L_+\!\otimes\!L_-\!:\,
\big||z_+|^2\!-\!|z_-|^2\big|\!<\!2\ep(x)\big\}\to 
\big(\tilde{V}_a\cap V_a(\ep)\!\times\!\C\!\times\!\C\big)/S^1,\\
\qquad [x,z_+,z_-]\to \big[\Psi(x,(|z_+|^2-|z_-|^2)/2),z_+,z_-\big],
\end{gather*}
is a diffeomorphism such that 
$$\{\widetilde\Psi^*\tilde\om\}(w_1,w_2)=
\{\om_0\oplus\om_0\}(w_1,w_2)\qquad\forall~w_1,w_2\in\C\oplus\C
\subset T_x(L_+\oplus L_-),~x\in D_a.$$
This implies the claim.
\end{proof}

Let $U_{\pm} = V_{a;\pm}/S^1$. Define $X_{\cut}$ to be the closed symplectic manifold obtained by gluing the open charts $U_{\pm}$ and $X\setminus V_a$ via the symplectic gluing map
\begin{equation}
 U_{\pm(h-a)>0} \subset X\setminus V_a \to U_{\pm}\setminus D,\quad x \to (x,\sqrt{\pm(h(x)-a)}) \in V_{a;\pm} \stackrel{\tn{proj}}{\longrightarrow} U_{\pm}, 
\end{equation}
on the overlap. The space $X_{\cut}$ is called the \textsf{symplectic cut} of $X$ along $V_a$ with respect to the Hamiltonian $S^1$-action. If $V_a$ separates $X$ into two connected components, then $X_{\cut}$ is a union of two symplectic manifolds $(X_{\pm},\om_{\pm})$ with $X_{\pm}$ containing $U_{\pm}$.

Following \cite{A}, we apply the symplectic cut construction to $T^*S^n$ with the $S^1$-action on $U=T^*S^n\setminus S^n$ corresponding to the Hamiltonian 
$$ h\colon U\to \R,\qquad h(\alpha)=|\alpha|,$$
where $|\alpha|$ is the norm of $\alpha\in T^*S^n$ with respect to the standard metric on $S^n \subset \R^{n+1}$. Let $\ze_h$ be the corresponding vector field on $U$. If $x_i\colon S^n \to \R$ are the normal coordinates around a point $p\in S^n$ and $y_i\colon T^*S^n\to \R$ are the induced functions on $T^*S^n$, then
$$\ze_h(x,y)=\frac{1}{|y|} \sum_{i=1}^{n}y_i \frac{\partial}{\partial x_i}+ O(|x|).$$
Thus, if $\gamma\colon(-\ep,\ep)\to (U,\alpha)$ is a $\ze_h$-trajectory, with $\alpha \in T^*_vS^n-0$, then 
$$ \gamma(t)^*=\frac{\tn{d}}{\tn{d}t} \tn{exp}_v(t\alpha^*/|\alpha|)\qquad \forall t\in (-\ep,\ep),$$
where $\gamma(t)^*,\alpha^*\in T^*S^n$ are the duals of $\gamma(t),\alpha\in T^*S^n$ with respect to the metric on $S^n$ and $\exp$ is the 
exponential map for this metric. Thus, the orbits of $\ze_h$ are $2\pi$-periodic and describe an $S^1$-action on $T^*S^n$. With respect to the identification
\fontsize{10}{12}\selectfont
\begin{equation}\label{equ1}
 U\cong \left\{ (v,w,r)\in S^n\times S^n \times \R^+: v\cdot w=0\right\} \subset \R^{n+1}\times \R^{n+1} \times \R \cong \C^{n+1}\times \R,
\end{equation}
 \normalsize
this action is the restriction of the action 
$$e^{\mf{i}\theta}(x,r)=(e^{-\mf{i}\theta}x,r).$$
In particular, this action is free. For any $a\in \R^+$,
\fontsize{10}{12}\selectfont
\begin{equation*}
\begin{split}
V_{-;a}&\cong \left\{(x_1,\cdots,x_{n+1},r,z)\in \C^{n+1}\times \R^+\times \C: \sum_{i=1}^{n+1}x_i^2=0,~\sum_{i=1}^{n+1}|\tn{Re}~x_i|^2=1,~r+\frac{|z|^2}{2}=a \right\}\\
& \cong  \left\{(x_1,\cdots,x_{n+1},z)\in \C^{n+1}\times \C: \sum_{i=1}^{n+1}x_i^2=0,~\sum_{i=1}^{n+1}|\tn{Re}~x_i|^2=1,~|z|^2<2a \right\}\equiv V'_{-,a}
\end{split}
\end{equation*}
\normalsize
with respect to the identification (\ref{equ1}) and 
$$ S^1\times V'_{-,a} \to V'_{-,a},\qquad e^{\mf{i}\theta}\cdot(x_1,\cdots,x_{n+1},z)=(e^{-\mf{i}\theta}x_1,\cdots,e^{-\mf{i}\theta}x_{n+1},e^{-\mf{i}\theta}z).$$
The symplectic manifold $((T^*S^n)_-,\om_-)$ is obtained by gluing 
\begin{equation}\label{equ2}
\begin{split}
(T^*S^n)_{<a}&\equiv \left\{\alpha \in T^*S^n \colon |\alpha|<a\right\}\\
             &\cong\left\{(v,w)\in S^n\times \R^{n+1}\colon v\cdot w=0, |w|<a\right\} \subset \C^{n+1}
\end{split}
\end{equation}
\normalsize
with $V'_{-;a}/S^1$ by the map 
\begin{equation}\label{equ3}
 (T^*S^n)_{<a}~-~S^n \to V'_{-;a}/S^1,\quad (v,w)\to [v+\mf{i}\frac{w}{|w|},\sqrt{2(a-|w|)}].
\end{equation}
We define a diffeomorphism 
$$f\colon (T^*S^n)_-\to Q^n\equiv \left\{[x_0,\cdots,x_{n+1}]\in \P^{n+1}\colon x_0^2=\sum_{i=1}^{n+1}x_i^2\subset \P^n\right\}$$
by 
\begin{equation*}
\begin{split}
(T^*S^n)_{<a}&\to Q^n,\quad (v,w)\to\left[\sqrt{1-\frac{|w|^2}{a^2}},v+\mf{i}\frac{w}{a}\right],\\
 V'_{-;a}/S^1 &\to Q^n,\quad [x,z]\to\left[\frac{z}{\sqrt{a}}\sqrt{1-\frac{z^2}{4a}}, \left(1-\frac{|z|^2}{4a}\right)x+\frac{|z|^2}{4a}\bar{x}\right].
\end{split}
\end{equation*}
By (\ref{equ3}), $f$ is well-defined on the overlap. Using the fact that 
$$\om_{T^*S^n}=\om_{\C^{n+1}}\!|_{T^*S^n}$$
with respect to the identification (\ref{equ2}), we find that
$$f^*\om_{\tn{FS}}=\frac{\pi}{4a}\om_-.$$
The diffeomorphism $f$ takes $S^n\subset (T^*S^n)_-$ and 
$$D=\{[x_1,\cdots,x_{n+1},z]\in V'_{-;a}/S^1 :z=0\}$$
to $Q^n_{\R}\equiv Q^n\cap \R\P^n$ and $Q^n\cap (x_0=0)\cong Q^{n-1}$, respectively. The former is the fixed-point locus of the restriction $\tau_{n}^Q$ to $Q^n$ of the standard involution 
$$\tau_{n+1}\colon \P^{n+1}\to\P^{n+1},\qquad [z_0,\cdots,z_{n+1}]\to[\bar{z}_0,\cdots,\bar{z}_{n+1}].$$
This is the involution on $Q^n$ induced by the standard involution of $T^*S^n$ via the diffeomorphism $f$.

A free $\Z_k$ action on $S^n$ by isometries, with primitive $k$-th roots $\xi_1,\cdots,\xi_m$ of 1 if $n=2m-1$, commutes with the above $S^1$-action and thus induces a Hamiltonian action on $T^*L$, where $L=S^n/\Z_k$. It also induces an action on $Q^n$ taking $[z_0,\ldots,z_{n+1}]$ to
$$
\begin{cases} [z_0,-z_1,\ldots,-z_{n+1}],\!\!\!&\hbox{if}\,k\!=\!2,\\
[z_0,\xi_1^{\R}z_1\!+\!\xi_1^{\mf{i}\R}z_2,\xi_1^{\R}z_2\!-\!\xi_1^{\mf{i}\R}z_1,\ldots,
\xi_m^{\R}z_n\!+\!\xi_m^{\mf{i}\R}z_{n+1},\xi_m^{\R}z_{n+1}\!-\!\xi_m^{\mf{i}\R}z_n],\!\!\!&
\hbox{if}\,n\!=\!2m\!-\!1,
\end{cases}
$$
where $\xi_i^{\R}$ and $\xi_{i}^{\mf{i}\R}$ are the real and imaginary parts of $\xi_i$, respectively. This action preserves the divisor 
$$D_n\equiv \{[z_0,\cdots,z_{n+1}]\in Q^{n}\colon z_0=0\}\equiv Q^{n-1}$$
and commutes with the involution $\tau_{n}^{Q}$. The quotient $Q^n/\Z_k$ and $D_n/\Z_k$ thus correspond to a symplectic cut on $T^*L$ for the $S^1$-action on $L$ and inherit involutions from the standard involution on $T^*L$. If $k=2$, the $S^1$-action on $S^n$ induces a free $S^1/\Z_2\cong S^1$ Hamiltonian action on $L=\R\P^n$ with
\begin{equation*}
\begin{split}
((T^*L)_-,L,D)\cong (Q^n,Q^n_{\R},D_n)/\Z_2\cong (\P^n,\R\P^n,D_n),\\
[x_0,x_1,\cdots,x_{n+1}]\to [x_1,\cdots,x_{n+1}].
\end{split}
\end{equation*}
If the induced $\Z_k$-action on $Q^n$ is free, then the $S^1$-action on $L=T^*S^n/\Z_k$ is also free and 
$$ ((T^*L)_-,L,D)\cong (Q^n,Q^n_{\R},D_n)/\Z_k.$$

\subsection{Symplectic sum}\label{sec:sum}
In this section, we review the symplectic sum construction, following \cite[Section 2]{IP2}. Starting from a nodal symplectic manifold $(X_{\cut},\om_{\cut})$ as in Section \ref{sec:cut}, we build a symplectic fibration $\pi\colon\mc{X}\to \De$ with central fiber $\mc{X}_0$ obtained from $X_{\cut}$ by identifying the two copies of $D$ in the canonical way. Since the operation is localized near $D\subset X_{\cut}$, we may assume that $X_{\cut}=X_-\sqcup_D X_+$; therefore, throughout this section $(X_{\pm},D)$ are two symplectic manifolds each containing a copy of a symplectic divisor $D$ so that their normal bundles $\mc{N}^{X_{\pm}}_{D}$ are dual to each other. The symplectic sum construction involves gluing three open symplectic charts: 
$$\mc{X}_{\pm}=(X_{\pm}\setminus D)\times \De,~~~ 
\mc{X}_{\tn{neck}}= \{ (p,x,y)\!\in\!\mc{N}^{X_{+}}_{D}\!\oplus\!\mc{N}^{X_{-}}_{D}\colon~~ |x|,|y|\leq1, |xy|<\de \},$$
where $\De\subset\C$ is a small disk, $\de\in\R^+$ is sufficiently small, and $|\cdot|$ denotes a Hermitian norm on $\mc{N}_D^{X_+}$
and the dual Hermitian norm on $\mc{N}_D^{X_-}\cong(\mc{N}^{X_+}_D)^*$.

Given a complex line bundle $\pi\colon E \to D$, fix a hermitian metric on $E$. Let
$$  \rho\colon E\to \R,\quad \rho(x)=\frac{1}{2}|x|^2,\qquad  \rho^*\colon E^*\to \R,\quad \rho^*(y)=\frac{1}{2}|y|^2.$$ 
A Hermitian connection in $E$ defines a 1-form $\alpha$ on $E\setminus D$ with $\alpha(\partial{\theta})=1$. This is the pull-back of the connection form on the circle bundle viewed as a principal $S^1$-bundle. On the total space of $E$, we define a symplectic form~by
$$ \om_{E}= \pi^*\om_D + d(\rho \alpha).$$
It extends across the zero section and is $S^1$-invariant. The dual bundle $E^*$ inherits a dual connection $\alpha^*$.
Hence, we get a symplectic form on $\pi \colon E\oplus E^* \to D$,  
\begin{equation*}
\begin{aligned}
\om_{\tn{neck}}&= \pi^*\om_D+ d(\rho \wedge \alpha)+d(\rho^* \wedge \alpha^*)\\
						   &=  \pi^*\om_D+ (\rho-\rho^*) \pi^* F  +d\rho \wedge \alpha + d\rho^* \wedge \alpha^*,
\end{aligned}
\end{equation*}						   
where $F$ is the curvature 2-form of $\alpha$. This space admits an $S^1$-action given by 
$$(p,x,y) \to (p,e^{\mf{i}\theta} x, e^{-\mf{i}\theta} y)$$
with moment map $\rho^*-\rho$. There is also a natural $S^1$-invariant map 
$$\la\colon E\oplus E^* \to \C,\qquad (p,x,y)\to xy \in \C.$$ 
Putting $E=\mc{N}^{X_{+}}_{D}$, we obtain a symplectic form $\om_{\tn{neck}}$ on $\mc{X}_{\tn{neck}}$. 

Let $\mc{X}$ be the smooth manifold obtained by gluing the three charts by the diffeomorphisms
\begin{equation}\label{chart-gluing}
\psi_{\pm} \colon \mc{X}_{\tn{neck}}\setminus \mc{N}_D^{X_{\mp}} \to \mc{X}_{\pm},  \quad (p,x_+,x_-) \to ((p,x_{\pm}),x_+x_-).
\end{equation}
The map $\la\colon \mc{X}_{\tn{neck}} \to \C$ extends to the whole of $\mc{X}$ and gives $\mc{X}$ the structure of a fibration over $\De$ such that the fiber over zero is $X_{-}\cup_D X_{+}$ and the other fibers are smooth manifolds.

\begin{remark}
If the manifolds $X_{\pm}$ are obtained from the symplectic cut procedure on $(X,\om)$ along $V_a=h^{-1}(a)\subset U \subset X$, then $\mc{X}_{\pm}\cong X_{\pm(h-a)>0} \times \De$ and $\mc{X}_{\tn{neck}}\cong \tilde{h}^{-1}(a)/S^1$, where
$$\tilde{h}\colon U\times \C \times \C \to \R, \quad \tilde{h}(p,x,y)=h(p)-\frac{1}{2}|x|^2+\frac{1}{2}|y|^2,$$
is the moment map for the $S^1$-action $(p,x,y)\to (p,e^{\mf{i}\theta} x, e^{-\mf{i}\theta} y)$ and $\om_{\tn{neck}}$ is the symplectic structure induced from $\om\oplus \om_0\oplus \om_0$ via symplectic cut. 
\end{remark}

We next define a symplectic structure on $\mc{X}$. By the Symplectic Neighborhood Theorem \cite[Chapter 3]{MS1}, a neighborhood of $D$ in $X_{\pm}$ is symplectomorphic to the disc bundle of radius $\ep\leq 1$ in $\mc{N}_D^{X_{\pm}}$. We can assume $\ep=1$ (by some re-scaling). Let $\om_0= r dr d\theta$ be the symplectic form on $\C$. In the overlap region, where $1-\de < |x| < 1$ and $|y|<\de$, 
$$\om_{\tn{neck}}= \om_{X_{+}}+ d(\rho^* \alpha^*)\quad \tn{and} \quad \psi_+^* (\om_{X_+} \oplus \om_0) = \om_{X_{+}} + \la^* \om_0,$$ 
because of the symplectic neighborhood identification. We can smoothly merge  
$$\la^* (\om_0)= \frac{1}{2} d (|\la|^2 \la^* d\theta) = 2d(\rho \rho^* \la^* d\theta)=d(2\rho\rho^* (\alpha+\alpha^*))$$ 
into $d(\rho^* \alpha^*)$ by replacing $ 2\rho^* \rho (\alpha+\alpha^*)$ with 
$$\eta\cdot 2\rho^* \rho (\alpha+\alpha^*)+(1-\eta)\cdot \rho^* \alpha^*,$$ 
where $\eta=\eta(|x|)$ is a cutoff function such that $\eta(|x|)=1$ if $|x|\geq 1$, $\eta(x)=0$ if $|x|\leq 1-\de$, and $|d\eta| < 2/\de$. If $\de$ is sufficiently small, the closed two-form
$$\om_{X_{+}}+ d(\eta(2\rho^* \rho (\alpha+\alpha^*))+(1-\eta) \rho^* \alpha^*)$$ 
is non-degenerate; see \cite[Section 2]{IP2}. We can do the same procedure for the other overlap, thus obtaining a symplectic form $\om_{\mc{X}}$ on $\mc{X}$. 

Suppose $c_1(TX)=0$. Let $A\in H_2(X_+)$ and $s=A\cdot D$. After multiplying $A$ by some scalar, there is $B\in H_2(X_-)$ such that $B\cdot D=s$. We can deform $A+B$ into a homology class $C=A\#B$ in the smooth fibers $\mc{X}_{\la}$. By \cite[Lemma 2.4]{IP2}, 
$$0= K_{\mc{X}_{\la}}(C)= K_{X_-}(B)+K_{X_{+}}(A)+2s.$$
In each case, $K_{X_-}$ is equal to $\frac{1-(k_0+n)}{k}\cdot[D]$, where $k_0=1+\de_{k,2}$ is
the order of the branching of $Q^n\ra X_-$ along $Q^{n-1}$, since
$$K_{Q^{n}}=\pi^* K_{X_{-}}+(k_0-1)[Q^{n-1}]$$
and $K_{Q^{n}}=-n [Q^{n-1}]$. This confirms (3) in Proposition~\ref{prop:main-space}.

The statement of Proposition~\ref{prop:main-space} concerning $\mc{J}_{\mc{X}}$ follows from \cite[Lemma 2.3]{IP2} and \cite[Theorem A.2]{IP1}.

\subsection{Involution on the symplectic cut and sum}\label{sec:involution}
It remains to prove the claim of Proposition \ref{prop:main-space} concerning antisymplectic involutions. We call an antisymplectic involution $\phi$ and an $S^1$-action $e^{\mf{i}\theta}\colon U \to U$ with moment map $h\colon U\to \R$ \textsf{compatible} if $h=h\circ \phi$.
Since the Hamiltonian flow for $h\circ\phi$ is $\phi\circ e^{-\mf{i}\theta}\circ \phi$,
 $\phi \circ e^{\mf{i}\theta} = e^{-\mf{i}\theta}\circ\phi$ for a $\phi$-compatible $S^1$-action. 
 
If $D\subset X$ is a submanifold preserved by an involution $\phi$ on $X$, the differential $d\phi$ induces a linear map 
$$\phi_*\colon \mc{N}_D^X\to \mc{N}_D^X, \qquad v\to d\phi(v)+T_{\phi(p)}D\quad \forall v\in T_pX, p\in D,$$
covering $\phi\colon D\to D$.

\begin{lemma}\label{lem:ind-invol}
Let $(X,\om,\phi)$ be a symplectic manifold with a real structure, $h\colon U\to S^1$ be the moment map for a free Hamiltonian $S^1$-action on an open subset $U\subset X$, and $a\in\R$ be a regular value of $h$ so that the hypersurface $V_a\equiv h^{-1}(a)$ is non-empty. Let $X_{\cut}$ be the corresponding symplectic manifold obtained from $(X,\om)$ by symplectically cutting along $V_a$ as in Section~\ref{sec:cut} and let $D_{\pm}\cong D \subset X_{\cut}$ be the corresponding divisors. If the $S^1$-action is compatible with $\phi$, there is a real structure $\phi_{\cut}$ on $X_{\cut}$ preserving $D_{\pm}$ such that the canonical projection map $X\to X_{\cut}/D_+\sim D_-$ intertwines the involutions $\phi$ and $\phi_{\cut}$ and
$$
(\mc{N}_{D_+}^{X_{\cut}}\otimes \mc{N}_{D_-}^{X_{\cut}},(\phi_{\cut})_*\otimes (\phi_{\cut})_*) \cong (D\times \C, \phi_{\cut} \times c),
$$
where $c$ is the standard complex conjugation on $\C$.
If in addition $\Fix(\phi)\cap V_a=\emptyset$, then $\Fix(\phi_{\cut})\cap D_{\pm}=\emptyset$.
\end{lemma}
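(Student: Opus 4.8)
The plan is to construct $\phi_{\cut}$ chart by chart, using the description of $X_{\cut}$ as the gluing of $X\setminus V_a$ with the symplectic reductions $U_{\pm}=V_{\pm;a}/S^1$, and then check that the gluing is $\phi$-equivariant. First I would record the basic compatibility consequence: since the $S^1$-action is compatible with $\phi$, we have $\phi\circ e^{\mf{i}\theta}=e^{-\mf{i}\theta}\circ\phi$, so $\phi$ preserves $V_a=h^{-1}(a)$ setwise and descends to an antisymplectic map on the reduced spaces. More precisely, extend $\phi$ to $\tilde\phi=\phi\times c$ on $U\times\C$ (and $\phi\times c\times c$ on $U\times\C\times\C$); because $h_{\pm}(x,z)=h(x)\mp\tfrac12|z|^2$ is $\tilde\phi$-invariant and $\tilde\phi$ conjugates the $e^{\mf{i}\theta}$-action to the $e^{-\mf{i}\theta}$-action, $\tilde\phi$ preserves $V_{\pm;a}$ and commutes with the $S^1$-quotient, hence induces an involution $\phi_{\pm}$ on $U_{\pm}=V_{\pm;a}/S^1$. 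Using the Marsden--Weinstein characterization of $\om_{\pm}$ from the lemmas above, $\pi_{\pm}^*(\phi_{\pm}^*\om_{\pm})=\tilde\phi^*(\om\oplus\om_0)|_{TV_{\pm;a}}=-(\om\oplus\om_0)|_{TV_{\pm;a}}$, so $\phi_{\pm}$ is antisymplectic; and it preserves $D_{\pm}=(V_{\pm;a}\cap U\times 0)/S^1$ because $\tilde\phi$ preserves $U\times 0$.

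Next I would check that these local involutions patch along the gluing map defining $X_{\cut}$. On the overlap, a point $x\in U_{\pm(h-a)>0}\subset X\setminus V_a$ is identified with $[x,\sqrt{\pm(h(x)-a)}]\in U_{\pm}$; applying $\phi$ on the source gives $\phi(x)$, applying $\phi_{\pm}$ on the target gives $[\phi(x),c(\sqrt{\pm(h(x)-a)})]=[\phi(x),\sqrt{\pm(h(\phi(x))-a)}]$ since $h\circ\phi=h$ and the radial coordinate is real and nonnegative. These agree, so $\phi$ on $X\setminus V_a$ and the $\phi_{\pm}$ on $U_{\pm}$ glue to a well-defined smooth involution $\phi_{\cut}$ on $X_{\cut}$; it is antisymplectic chart by chart, preserves $D_{\pm}$, and by construction the canonical projection $X\to X_{\cut}/(D_+\!\sim\! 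D_-)$ (collapsing $V_a$ to $D$) intertwines $\phi$ and $\phi_{\cut}$.

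For the normal-bundle statement I would use the third of the three preliminary lemmas, which identifies the symplectic normal bundle of $D_a$ in $\tilde h^{-1}(a)/S^1$ with $L_+\oplus L_-$ where $L_{\pm}=V_a\times_{S^1}\C$, and observe that $\mc{N}^{X_{\cut}}_{D_+}\otimes\mc{N}^{X_{\cut}}_{D_-}\cong L_+\otimes L_-$. The extended involution $\tilde\phi=\phi\times c\times c$ acts on $L_+\otimes L_-=V_a\times_{S^1}\C$ (with total weight $0$, matching that $\phi$ conjugates the action) by $[x,z_+\otimes z_-]\mapsto[\phi(x),\bar z_+\otimes\bar z_-]$, and on the weight-zero line $\C$ this is literally $z_+z_-\mapsto \overline{z_+z_-}$; trivializing $L_+\otimes L_-\cong D\times\C$ equivariantly then gives $((\phi_{\cut})_*\otimes(\phi_{\cut})_*)\cong \phi_{\cut}\times c$, which is the claimed isomorphism. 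Finally, if $\Fix(\phi)\cap V_a=\emptyset$, then a fixed point of $\phi_{\cut}$ on $D_{\pm}$ would lift to an $S^1$-orbit in $V_{\pm;a}\cap(U\times 0)$, i.e.\ to a point $x\in V_a$ with $\phi(x)=e^{\mf{i}\theta}x$ for some $\theta$; replacing $x$ by $e^{\mf{i}\theta/2}x$ (using freeness and $\phi e^{\mf{i}\theta}=e^{-\mf{i}\theta}\phi$) produces a genuine fixed point of $\phi$ on $V_a$, a contradiction, so $\Fix(\phi_{\cut})\cap D_{\pm}=\emptyset$.

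The main obstacle I expect is the bookkeeping in the last two points: getting the $S^1$-weights exactly right so that $\tilde\phi$ genuinely descends to $L_+\otimes L_-$ and acts as honest conjugation on the weight-zero trivialization (rather than conjugation twisted by a phase), and carefully running the "take a square root of the rotation" argument so that freeness of the action really upgrades an orbit fixed by $\phi_{\cut}$ to a point fixed by $\phi$. Everything else is a routine patching argument once the Marsden--Weinstein characterizations from the preceding lemmas are invoked.
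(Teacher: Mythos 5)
Your proposal is correct and takes essentially the same approach as the paper: define $\phi_{\cut}$ by $\phi$ on $X\setminus V_a$ and by $[x,z]\mapsto[\phi(x),\bar z]$ on $U_{\pm}=V_{\pm;a}/S^1$, use $h_{\pm}\circ(\phi\times c)=h_{\pm}$ and $\phi\circ e^{\mf{i}\theta}=e^{-\mf{i}\theta}\circ\phi$ for well-definedness and antisymplecticity, and for the last claim take a ``square root'' $e^{\mf{i}a/2}\cdot p$ of the rotation to produce a $\phi$-fixed point on $V_a$. You in fact supply more detail than the paper does --- the explicit check of compatibility of the two definitions on the overlap and the explicit verification of the normal-bundle isomorphism via $\mc{N}^{X_{\cut}}_{D_{\pm}}\cong L_{\pm}=V_a\times_{S^1}\C$ so that $L_+\otimes L_-$ is the weight-zero bundle $D\times\C$ with induced involution $\phi_{\cut}\times c$ --- both of which the paper leaves implicit.
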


\begin{proof}
We continue with the notation of the symplectic cut construction in Section~\ref{sec:cut}. We define $\phi_{\cut}$ on $X_{\cut}$ by
$$ \phi_{\cut}\colon X\setminus V \to  X\setminus V,\quad x\to \phi(x), \qquad \phi_{\pm}\colon U_{\pm}\to U_{\pm},\quad [x,z]\to[\phi(x),\bar{z}].$$
Since the moment maps $h_{\pm}$ are invariant with respect to the involution $(x,z)\to(\phi(x),\bar{z})$ on $U\times \C$ and $\phi \circ e^{\mf{i}\theta}=e^{-\mf{i}\theta}\circ \phi$, the second map above is well-defined. Since it is induced by an antisymplectic map on $U\times \C$, it is antisymplectic. 

If $\phi$ preserves an orbit $S^1\cdot p \subset V_a$, then $\phi(p)=e^{\mf{i}a}\cdot p$ for some $e^{\mf{i}a}\in S^1$ and 
$$\phi(e^{\mf{i}\theta}\cdot p)=e^{\mf{i}(a-2\theta)}\cdot e^{\mf{i}\theta}\cdot p \qquad \forall e^{\mf{i}\theta}\in S^1.$$
Thus, $e^{\mf{i}a/2}\cdot p \in \Fix(\phi)\cap V_a$. This implies the last claim.
\end{proof}

\begin{remark}
If $V_a$ separates $X$ into two connected components, then $\phi_{\cut}$ restricted to $X_{\pm}$ is an involution $\phi_{\pm}$ agreeing on the common divisor $D$. 
\end{remark}

\begin{lemma}\label{deforming-involution0}
Let $(X,\om,\phi)$ be a symplectic manifold with a real structure and $L=\Fix(\phi)$.
Then there exist a neighborhood $N(L)\subset T^*L$ of the zero section, a neighborhood $U\subset X$ of $L$, and a diffeomorphism
$$  \psi\colon(N(L),L) \to (U,L) \qquad \tn{s.t.} \qquad \psi^*\om =\om_L,\quad \psi^{-1}\circ \phi\circ \psi= \tau_L,$$
where $\om_L$ and $\tau_L$ are the canonical symplectic form and antisymplectic involution on $T^*L$, respectively.
\end{lemma}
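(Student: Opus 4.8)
This is the $\Z_2$-equivariant refinement of Weinstein's Lagrangian neighborhood theorem, with $\Z_2$ acting by $\phi$ on $X$ and by $\tau_L$ on $T^*L$. I would first build a $\phi$-equivariant tubular neighborhood of $L$ modeled on $(T^*L,\tau_L)$, and then correct it by an equivariant Moser isotopy so that it also matches $\om$. For the first step, I would average an $\om$-compatible almost complex structure over $\langle\phi\rangle$ (as in the footnote to Lemma~\ref{tubular_lmm}, but over $\Z_2$ rather than $S^1$) to get, near $L$, a $\phi$-invariant $\om$-compatible $J$ with $\phi$-invariant metric $g_J=\om(\cdot,J\cdot)$. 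Its normal exponential map is a diffeomorphism from a neighborhood $N_0$ of the zero section of $J(TL)=(TL)^{\perp_{g_J}}$ onto a neighborhood $U_0$ of $L$; since $\phi$ is a $g_J$-isometry fixing $L$ with $d\phi=-\id$ on $J(TL)$ (as $\phi_*Jv=-J\phi_*v=-Jv$ for $v\in TL$), this map intertwines $\phi$ with fiberwise multiplication by $-1$. Identifying the normal bundle with $T^*L$ through the $\phi$-invariant Lagrangian complement $J(TL)$, exactly as in the usual proof of Weinstein's theorem, yields a diffeomorphism $\psi_0\colon N_0\subset T^*L\to U_0\subset X$ with $\psi_0|_L=\id_L$, with $\psi_0^*\om$ and $\om_L$ agreeing along the zero section, and with $\psi_0^{-1}\circ\phi\circ\psi_0=\tau_L$, since fiberwise $-1$ is precisely $\tau_L$.

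Next I would run an equivariant Moser argument. Set $\om_0=\om_L$, $\om_1=\psi_0^*\om$, and $\om_t=\om_0+t(\om_1-\om_0)$ for $t\in[0,1]$. Both forms are closed, agree along the zero section -- so $\om_t$ is symplectic on a $t$-independent neighborhood of $L$ -- and are $\tau_L$-antisymplectic; for $\om_1$ this is the computation $\tau_L^*\psi_0^*\om=\psi_0^*\phi^*\om=-\psi_0^*\om$. I would then apply the relative Poincar\'e lemma using the $\tau_L$-equivariant fiber scaling $r_s(q,\alpha)=(q,s\alpha)$ as the contracting homotopy; this produces a $1$-form $\sigma$ near $L$ with $d\sigma=\om_1-\om_0$, with $\sigma|_L=0$, and with $\tau_L^*\sigma=-\sigma$, the last because $\om_1-\om_0$ is $\tau_L$-anti-invariant and the homotopy is equivariant. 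Defining $X_t$ by $\iota_{X_t}\om_t=-\sigma$, the anti-invariance of $\sigma$ and $\om_t$ forces $(\tau_L)_*X_t=X_t$; as $\sigma$ vanishes along $L$, so does $X_t$, so its flow $\chi_t$ is defined near $L$ for all $t\in[0,1]$, fixes $L$ pointwise, and commutes with $\tau_L$. Cartan's formula gives $\frac{d}{dt}\chi_t^*\om_t=\chi_t^*(d\,\iota_{X_t}\om_t+(\om_1-\om_0))=0$, hence $\chi_1^*\om_1=\om_0$. Taking $N(L)$ small enough and $\psi=\psi_0\circ\chi_1$, $U=\psi(N(L))$, I would get $\psi^*\om=\chi_1^*\om_1=\om_L$ and $\psi^{-1}\circ\phi\circ\psi=\chi_1^{-1}\circ\tau_L\circ\chi_1=\tau_L$, as required.

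The main thing to watch is that every step respects the $\Z_2$-action simultaneously: the hard part is producing a single $1$-form $\sigma$ that is at once a primitive of $\om_1-\om_0$, vanishing along $L$, and $\tau_L$-anti-invariant, and then confirming that the Moser vector field inherits $\tau_L$-invariance so that its flow conjugates $\tau_L$ to itself. This forces the retraction in the relative Poincar\'e lemma to commute with $\tau_L$ -- the linear fiber scaling does -- after which equivariance propagates mechanically. The remaining points are routine: taking $J$ to be $\om$-compatible so that the $g_J$-normal bundle of $L$ is a Lagrangian complement, and shrinking neighborhoods so the Moser flow exists up to time $1$ uniformly in $t$.
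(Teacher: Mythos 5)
Your proof is correct and follows essentially the same strategy as the paper: first build a $\phi$-equivariant Weinstein chart using the exponential map of a $\phi$-invariant $\om$-compatible $J$ (so that $\phi$ is conjugated to fiberwise $-1=\tau_L$), then run a Moser argument in which the primitive $\sigma$ of $\om_1-\om_0$ is made $\tau_L$-anti-invariant so that the Moser flow commutes with $\tau_L$. The only cosmetic difference is that you obtain the anti-invariant $\sigma$ directly from the $\tau_L$-equivariance of the radial retraction, whereas the paper takes the primitive from \cite[(3.7)]{MS1} and anti-symmetrizes it after the fact; these produce the same thing.
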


\begin{proof}
The proof is a modification of the proofs of \cite[Theorem~3.33]{MS1} and \cite[Lemma~3.14]{MS1}.
Let $J$ be an $(\om,\phi)$-compatible almost complex structure on $X$ and denote by $g_J$ the associated metric. Let
$$ \Phi_q \colon T_q^*L \to T_q L,\qquad g_J(\Phi_q(v^*),v)=v^*(v)\quad \forall v^* \in T_q^*L,\; v\in T_qL, $$
be the isomorphism induced by the metric $g_J$. Define 
$$ \psi\colon T^*L \to X \quad \hbox{by}\quad \psi(q,v^*) = \exp_q(J_q \Phi_q(v^*)).$$
Since $\phi$ is an isometry of $g_J$, 
$$ \phi(\exp_q(J_qu))=\exp_q(D\phi(J_qu))= \exp_q(-J_qD\phi(u))=\exp_q(-J_qu)\quad \forall v\in T_qL.$$
Therefore, $\psi^{-1} \circ \phi \circ \psi(v^*)=(-v^*)= \tau_{L}(v^*)$. By the proof of \cite[Theorem~3.33]{MS1}, $\psi^*\om|_{L}=\om_L$.

Define $\om_1=\psi^*\om$ and $\om_0=\om_L$; then $\om_0$ and $\om_1$ are two symplectic forms on $N(L)$ such that $\tau_L^*\om_i=-\om_i$.
By \cite[Lemma 3.14]{MS1}, there is a path of symplectomorphisms $\varphi_t$ such that $\varphi_1^*\om_1=\om_0$.
We show that $\varphi_t$ can be chosen to commute with $\tau_L$, i.e.
\begin{equation}\label{equ:compatibility}
\tau_L \circ \varphi_t =\varphi_t \circ \tau_L  .
\end{equation}
If $\si$ is as in \cite[(3.7)]{MS1}, $d\si = \om_1 -\om_0$ and $\alpha \equiv \frac{\si+\tau_L^*\si}{2}$ is a $\tau_L$-invariant closed $1$-form. Replacing $\si$ by $\si-\frac{\alpha}{2}$,
we can assume that $\tau_L^*\si=-\si$. This implies that the path of symplectomorphisms $\varphi_t$ given by the vector field $X_t$ such that
$$ \si = \iota_{X_t}(t\om_1 +(1-t) \om_0)$$
satisfies (\ref{equ:compatibility}).
\end{proof}

\begin{corollary}\label{coro:main-coro}
Let $(X,\om,\phi)$ be a symplectic manifold with a real structure and $L=\Fix(\phi)$. If $L$ is an archetypal Lens space, there is a symplectic cutting of $X$ into symplectic manifolds $(X_{\pm},\om_{\pm})$ with antisymplectic involutions $\phi_{\pm}$  and $\phi_{\pm}$-invariant symplectic divisor $D$ so that $\Fix(\phi_+)=\emptyset$, $\Fix(\phi_-)=L$, and there are isomorphisms
\begin{equation*}
\begin{split}
(X_-,\om_-,D,L)&\cong (Q^n/\Z_k,\om_{\FS},D_n/\Z_k,Q^n_{\R}/\Z_k),\\  
(\mc{N}_D^{X_+}\otimes \mc{N}_{D}^{X_-},(\phi_+)_*\otimes (\phi_-)_*) &\cong (D\times \C, \phi_{\pm} \times c).
\end{split}
\end{equation*}
\end{corollary}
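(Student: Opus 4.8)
The plan is to assemble Corollary~\ref{coro:main-coro} by combining the three ingredients already in hand: the canonical $S^1$-action on $T^*L$ from Section~\ref{sec:cut}, the normalization of the involution near $L$ from Lemma~\ref{deforming-involution0}, and the induced-involution mechanism from Lemma~\ref{lem:ind-invol}. First I would apply Lemma~\ref{deforming-involution0} to obtain a Weinstein neighborhood $\psi\colon(N(L),L)\to(U,L)\subset X$ with $\psi^*\om=\om_L$ and $\psi^{-1}\circ\phi\circ\psi=\tau_L$; this transports everything to the model $T^*L$ where the geometry is explicit. On $T^*L$ we have the canonical $S^1$-action coming from the geodesic flow (the action built in Section~\ref{sec:cut} on $T^*S^n$, descended to $T^*L=T^*S^n/\Z_k$ since the $\Z_k$-action commutes with it), with moment map $h(\alpha)=|\alpha|$. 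The key compatibility check is that this $S^1$-action is $\tau_L$-compatible in the sense defined just before Lemma~\ref{lem:ind-invol}, i.e.\ $h\circ\tau_L=h$: this is immediate because $\tau_L$ is $\alpha\mapsto-\alpha$ on fibers, which is a fiberwise isometry, so $|{-\alpha}|=|\alpha|$. Hence $\phi\circ e^{\mf i\theta}=e^{-\mf i\theta}\circ\phi$ on $U$.

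Next I would choose a regular value $a\in\R^+$ of $h$ small enough that $V_a=h^{-1}(a)\subset U$, and perform the symplectic cut of $(X,\om)$ along $V_a$ as in Section~\ref{sec:cut}; since $V_a$ is a sphere (or Lens-space) bundle over $L$ separating the zero section from the rest of $X$, the cut is $X_{\cut}=X_-\cup_D X_+$ with $X_-\supset L$. The $S^1$-action on $T^*L$ is free (shown in Section~\ref{sec:cut} for $T^*S^n$, and freeness descends for archetypal Lens spaces), so Lemma~\ref{lem:ind-invol} applies verbatim: it produces a real structure $\phi_{\cut}$ on $X_{\cut}$ preserving $D_\pm$, intertwined with $\phi$ by the quotient map, with the normal-bundle identification $(\mc N_{D_+}^{X_{\cut}}\otimes\mc N_{D_-}^{X_{\cut}},(\phi_{\cut})_*\otimes(\phi_{\cut})_*)\cong(D\times\C,\phi_{\cut}\times c)$. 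By the Remark after Lemma~\ref{lem:ind-invol}, since $V_a$ separates, $\phi_{\cut}$ restricts to involutions $\phi_\pm$ on $X_\pm$ agreeing on $D$, which gives the second displayed isomorphism of the corollary (with $\phi_{\cut}|_D=\phi_\pm|_D$). Since $L=\Fix(\phi)$ lies on the zero section, it is disjoint from $V_a$ (as $h>0$ there), so the last clause of Lemma~\ref{lem:ind-invol} gives $\Fix(\phi_{\cut})\cap D_\pm=\emptyset$; moreover $\Fix(\phi_-)=L$ because $\phi_-$ equals $\phi$ away from the cut locus and $L$ is untouched, while $\Fix(\phi_+)=\emptyset$ because $X_+\setminus D$ is identified with the region $h>a$ where $\phi$ is fixed-point free (again since $\Fix(\phi)=L$ sits at $h=0$).

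For the explicit identification $(X_-,\om_-,D,L)\cong(Q^n/\Z_k,\om_{\FS},D_n/\Z_k,Q^n_\R/\Z_k)$ I would invoke the computation already carried out in Section~\ref{sec:cut}: the diffeomorphism $f\colon(T^*S^n)_-\to Q^n$ with $f^*\om_{\FS}=\tfrac{\pi}{4a}\om_-$ carrying $S^n$ to $Q^n_\R$ and $D$ to $D_n\cong Q^{n-1}$, and the fact that $f$ intertwines the standard involution of $T^*S^n$ with $\tau_n^Q$; passing to the $\Z_k$-quotient (legitimate in the archetypal case, where the action is free on $Q^n$ and commutes with both the $S^1$-action and $\tau_n^Q$) gives the claim, after rescaling $\om_{\FS}$ by the constant $\tfrac{\pi}{4a}$. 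The main obstacle — or rather the one point requiring care rather than a hard argument — is verifying that the \emph{intrinsic} symplectic-cut involution $\phi_{\cut}$ coming from Lemma~\ref{lem:ind-invol} is compatible, under $f$, with $\tau_n^Q$: one must check that the model involution $[x,z]\mapsto[\phi(x),\bar z]$ used in the proof of Lemma~\ref{lem:ind-invol} is exactly the one that $f$ conjugates to the restriction of complex conjugation on $\P^{n+1}$. This is a direct substitution into the two explicit formulas defining $f$ on the charts $(T^*S^n)_{<a}$ and $V'_{-;a}/S^1$, using that the standard involution on $T^*S^n$ is $(v,w)\mapsto(v,-w)$, and I would present it as a short verification rather than a computation of substance.
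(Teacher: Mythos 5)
Your proposal follows exactly the same route as the paper's own (very terse, three-line) proof: normalize $\phi$ to $\tau_L$ in a Weinstein neighborhood via Lemma~\ref{deforming-involution0}, observe that $\tau_L$ is compatible with the geodesic $S^1$-action because $|{-\alpha}|=|\alpha|$, and apply Lemma~\ref{lem:ind-invol} together with the explicit model identification of $(T^*L)_-$ with $Q^n/\Z_k$ worked out in Section~\ref{sec:cut}. The only difference is that you spell out the verifications the paper leaves implicit (the fixed-point locus claims, the separation by $V_a$, and that $f$ intertwines the cut involution with $\tau_n^Q$), all of which are correct.
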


\begin{proof}
If $L$ is a Lens space and $U$ is a Weinstein neighborhood as in Lemma \ref{deforming-involution0}, 
then $\phi\!|_U\cong \tau_L$ is compatible with the associated $S^1$-action of $L$ described in the second half of Section~\ref{sec:cut}
and therefore descends to the symplectic cut by Lemma~\ref{lem:ind-invol}.
\end{proof}

We next obtain a statement similar to Lemma~2.11 for $\phi$-invariant symplectic submanifolds. 

\begin{lemma}\label{lem:sym-neighborhood}
Let $(X,\om,\phi)$ be a symplectic manifold with a real structure and $D\!\subset\!X$ a symplectic divisor preserved by~$\phi$.
Then there exist a neighborhood $N(D)\subset \mc{N}_D^X$ of the zero section, a neighborhood $U\subset X$ of $D$, and a diffeomorphism
$$\psi\!:(N(D),D)\ra(U,D) \qquad\textnormal{s.t.}\qquad \psi^{-1}\circ\phi\circ\psi=\phi_*: N(D)\ra N(D).$$
\end{lemma}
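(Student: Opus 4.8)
The statement is an equivariant refinement of the Symplectic Neighborhood Theorem for a $\phi$-invariant symplectic divisor $D$, and the plan is to adapt the argument of Lemma~\ref{deforming-involution0}, replacing the role of the cotangent bundle $T^*L$ by the normal bundle $\mc{N}_D^X$ and the role of $\tau_L$ by the bundle involution $\phi_*$. First I would pick an $(\om,\phi)$-compatible almost complex structure $J$ on $X$ (these exist by the usual averaging argument over the $\Z_2$-action generated by $\phi$) and use $J$ together with the induced metric $g_J$ to build an identification of a neighborhood $N(D)\subset\mc{N}_D^X$ of the zero section with a neighborhood $U$ of $D$ in $X$: one takes the $g_J$-orthogonal complement of $TD$ inside $TX|_D$ as a model for $\mc{N}_D^X$, and defines $\psi$ by the exponential map, $\psi(q,v)=\exp_q(v)$ for $v$ in this complement. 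Because $\phi$ is a $g_J$-isometry fixing $D$, commutes with $\exp$ in the sense $\phi(\exp_q(v))=\exp_{\phi(q)}(d\phi(v))$, and $d\phi$ preserves the splitting $TX|_D=TD\oplus(TD)^{\perp}$, this $\psi$ satisfies $\psi^{-1}\circ\phi\circ\psi=\phi_*$ on the nose (here $\phi_*$ acts on $(TD)^{\perp}\cong\mc{N}_D^X$ as the restriction of $d\phi$). This gives the diffeomorphism with the correct intertwining property; what remains is to arrange that it can be chosen symplectic, but in fact the stated lemma only asks for the intertwining property, not $\psi^*\om=\om_{N(D)}$ for a model form, so this first construction already suffices for the literal statement.

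If one also wants the symplectic normalization (which is how the lemma is used, cf.\ its invocation alongside Lemma~\ref{lem:ind-invol}), the plan is to run a $\phi$-equivariant Moser argument exactly as in the proof of Lemma~\ref{deforming-involution0}. Set $\om_1=\psi^*\om$ and let $\om_0$ be a model symplectic form on $\mc{N}_D^X$ built from $\om|_D$ and a connection (as in the symplectic sum discussion of Section~\ref{sec:sum}, where $\om_E=\pi^*\om_D+d(\rho\alpha)$ appears); both satisfy $\phi_*^*\om_i=-\om_i$ and agree along $D$. By the relative Moser theorem \cite[Chapter~3]{MS1} there is a primitive $\si$ with $d\si=\om_1-\om_0$ vanishing along $D$; replacing $\si$ by $\tfrac{1}{2}(\si-\phi_*^*\si)$ makes it $\phi_*$-anti-invariant while preserving $d\si=\om_1-\om_0$, and then the Moser vector field $X_t$ defined by $\si=\iota_{X_t}(t\om_1+(1-t)\om_0)$ is $\phi_*$-equivariant, so its flow $\varphi_t$ commutes with $\phi_*$ and $\varphi_1^*\om_1=\om_0$. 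Composing $\psi$ with $\varphi_1$ yields the desired symplectomorphism still intertwining $\phi$ and $\phi_*$.

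The main obstacle is purely the equivariance bookkeeping, not any new analytic difficulty: one must check at each stage that the almost complex structure, the metric, the exponential map, the primitive $\si$, and the Moser flow can all be chosen compatibly with the involution, and in particular that averaging/anti-symmetrizing $\si$ over $\phi$ does not destroy either the cohomological identity $d\si=\om_1-\om_0$ or the vanishing along $D$. Both are immediate because $\phi_*^*(\om_1-\om_0)=-(\om_1-\om_0)$, so $\phi_*^*\si$ is again a primitive and the anti-symmetrized form has the same differential; and $\phi_*$ fixes $D$, so it preserves the space of forms vanishing on $D$. Once these compatibility points are dispatched, the proof is essentially a transcription of the arguments already given for Lemma~\ref{deforming-involution0} and the Symplectic Neighborhood Theorem.
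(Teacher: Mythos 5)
Your first paragraph reproduces the paper's proof: fix an $(\om,\phi)$-compatible $J$, model $\mc{N}_D^X$ by a $\phi_*$-invariant complement of $TD$ in $TX|_D$, and use the $g_J$-exponential map together with the fact that $\phi$ is a $g_J$-isometry to get $\phi\circ\exp=\exp\circ\,\phi_*$ (the paper uses the $\om$-orthogonal complement $TD^{\om}$ rather than the $g_J$-orthogonal one, but this is immaterial to the argument). You also correctly observe that the lemma as stated asks only for a diffeomorphism intertwining $\phi$ and $\phi_*$, so the equivariant Moser step you outline in your later paragraphs is not needed here, and indeed the paper's proof ends after the exponential-map argument.
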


\begin{proof}
The proof is a modification of the proof of \cite[Theorem 3.30]{MS1}. Let $J$ be an $(\om,\phi)$-compatible almost complex structure on $X$ and denote by $g_J$ the associated metric. There is an isomorphism
$\mc{N}_D^X\cong TD^{\om}$,
where $TD^{\om}$ is the $\om$-orthogonal complement of $TD$ in $TX\!|_{D}$. Let $\exp\colon TD^{\om} \to X$ be the exponential map associated to $g_J$.
Since $\phi$ is an isometry with respect to~$g_J$, 
$$\phi\big(\exp(v)\big) = \exp\big(\phi_*(v)\big) \qquad\forall\,v\in TD^{\om}.$$ 
The restriction $\psi$ of $\tn{exp}$ to some neighborhood $N(D)$ of $D\subset TD^{\om}$ is a diffeomorphism onto an open subset $U\subset X$. 
\end{proof}

\begin{lemma}
Let $(X_{\pm},\om_{\pm},\phi_{\pm})$ be symplectic manifolds with real structures and $D\subset X_{\pm}$ be a common symplectic divisor preserved by $\phi_{\pm}$ such that $\phi_-\!|_D=\phi_+\!|_D$. If 
\begin{equation}\label{equ:isomorphism}
(\mc{N}_D^{X_+}\otimes \mc{N}_{D}^{X_-},(\phi_+)_*\otimes (\phi_-)_*) \cong (D\times \C, \phi_{\pm} \times c),
\end{equation}
where $c$ is the standard complex conjugation on $\C$, the corresponding symplectic sum fibration $\mc{X}\to \De$ as in Section~\ref{sec:sum} can be constructed so that it admits an antisymplectic involution $\phi_{\mc{X}}$ such that $\phi_{\mc{X}}\!|_{X_{\pm}}=\phi_{\pm}$.
\end{lemma}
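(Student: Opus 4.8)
The plan is to rerun the symplectic sum construction of Section~\ref{sec:sum} with $\phi_{\pm}$-equivariant auxiliary data and then to assemble $\phi_{\mc{X}}$ chart by chart. First I would apply Lemma~\ref{lem:sym-neighborhood} to $(X_+,\om_+,\phi_+)$ and to $(X_-,\om_-,\phi_-)$ to identify neighborhoods of $D$ in $X_{\pm}$ with neighborhoods of the zero section of $\mc{N}_D^{X_{\pm}}$ intertwining $\phi_{\pm}$ with $(\phi_{\pm})_*$. Since $\phi_{\pm}$ is antisymplectic and $D$ is a $\phi_{\pm}$-invariant symplectic divisor, one may choose a $J_{\pm}$ compatible with $\om_{\pm}$ and $\phi_{\pm}$ and preserving $TD$; then $\mc{N}_D^{X_{\pm}}$ becomes a complex line bundle on which $(\phi_{\pm})_*$ is a conjugate-linear real structure covering the common involution $\phi|_D:=\phi_+|_D=\phi_-|_D$ of $D$. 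Averaging over $(\phi_+)_*$, I may pick a Hermitian metric and Hermitian connection on $E:=\mc{N}_D^{X_+}$ compatible with this real structure and give $E^*\cong\mc{N}_D^{X_-}$ the dual ones; I will also use the isomorphism~(\ref{equ:isomorphism}) as the duality pairing $\mc{N}_D^{X_+}\otimes\mc{N}_D^{X_-}\ra\C$, $(x,y)\mapsto xy$, used in the gluing. Define $\phi_{\tn{neck}}\!:E\oplus E^*\ra E\oplus E^*$ by $\phi_{\tn{neck}}(p,x,y)=(\phi|_D(p),(\phi_+)_*x,(\phi_-)_*y)$. Then $\rho,\rho^*$ and $|x|,|y|$ are $\phi_{\tn{neck}}$-invariant while the connection $1$-forms satisfy $\phi_{\tn{neck}}^*\alpha=-\alpha$ and $\phi_{\tn{neck}}^*\alpha^*=-\alpha^*$, because a conjugate-linear fibre automorphism intertwines the $S^1$-action with its inverse and hence reverses $\partial_\theta$; together with $\phi|_D^*\om_D=-\om_D$ this gives $\phi_{\tn{neck}}^*\om_{\tn{neck}}=-\om_{\tn{neck}}$ and likewise $\phi_{\tn{neck}}^*\om_E=-\om_E$.

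Next I would define the involution on the three charts of Section~\ref{sec:sum}: $\phi_{\mc{X}_{\pm}}:=\phi_{\pm}\times c$ on $\mc{X}_{\pm}=(X_{\pm}\setminus D)\times\De$ (with $c$ the standard conjugation on $\De$), and $\phi_{\tn{neck}}$ as above on $\mc{X}_{\tn{neck}}$, which it preserves since $(\phi_{\pm})_*$ are isometries, and on which it is $\om_{\tn{neck}}$-antisymplectic by the first step. The crucial point is that, with respect to the chosen pairing, the hypothesis~(\ref{equ:isomorphism}) says $(\phi_+)_*\otimes(\phi_-)_*$ corresponds to $\phi|_D\times c$ on $\mc{N}_D^{X_+}\otimes\mc{N}_D^{X_-}\cong D\times\C$; hence $\la(\phi_{\tn{neck}}(p,x,y))=((\phi_+)_*x)\cdot((\phi_-)_*y)=\overline{xy}=\overline{\la(p,x,y)}$, so $\phi_{\tn{neck}}$ covers conjugation on $\De$, and the same identity applied to $x_+x_-$, combined with the Lemma~\ref{lem:sym-neighborhood} identifications near $D$, shows that the gluing maps $\psi_{\pm}$ of~(\ref{chart-gluing}) satisfy $\psi_{\pm}\circ\phi_{\tn{neck}}=\phi_{\mc{X}_{\pm}}\circ\psi_{\pm}$ on the overlaps $\mc{X}_{\tn{neck}}\setminus\mc{N}_D^{X_{\mp}}$. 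Therefore the three chartwise involutions glue to a single antisymplectic involution $\phi_{\mc{X}}$ on the smooth manifold $\mc{X}$ covering $c$ on $\De$.

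It then remains to see that the globalization of the symplectic form is equivariant and that $\phi_{\mc{X}}$ restricts correctly. In the overlap regions the interpolation that merges $\la^*\om_0$ into $d(\rho^*\alpha^*)$ uses only the cutoff $\eta(|x|)$, and $|x|$ is $\phi_{\tn{neck}}$-invariant while each interpolated two-form is $\phi_{\tn{neck}}$-anti-invariant, so the resulting form $\om_{\mc{X}}$ satisfies $\phi_{\mc{X}}^*\om_{\mc{X}}=-\om_{\mc{X}}$; by \cite[Section~2]{IP2} it is still non-degenerate for $\de$ small and the fibres $\mc{X}_{\la}$ remain symplectic. Finally $\phi_{\mc{X}}|_{X_{\pm}}=\phi_{\pm}$: on $X_{\pm}\setminus D$, the $\la=0$ locus of $\mc{X}_{\pm}$, this is $\phi_{\pm}$ by definition; on the components $\{y=0\}$ and $\{x=0\}$ of the neck over $\la=0$ it is $(\phi_{\pm})_*$, which the Lemma~\ref{lem:sym-neighborhood} identification carries to $\phi_{\pm}$; and on $D$ itself it is $\phi|_D=\phi_{\pm}|_D$.

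The main obstacle is purely organizational: one has to fix a single coherent package of $(\phi_+)_*$-compatible choices --- the neighborhood identifications of Lemma~\ref{lem:sym-neighborhood}, the metric and connection on $E$, and the cutoff $\eta$ --- and check that no step of the Section~\ref{sec:sum} construction quietly reintroduces a non-equivariant ingredient; once all auxiliary data are equivariant, every intermediate form is automatically $\phi_{\tn{neck}}$-anti-invariant and the verification is formal. (Alternatively, one could build $\phi_{\mc{X}}$ first as a diffeomorphism and then symmetrize by replacing $\om_{\mc{X}}$ with $\tfrac12(\om_{\mc{X}}-\phi_{\mc{X}}^*\om_{\mc{X}})$, but one would still have to argue that this average stays non-degenerate and fibrewise symplectic, which amounts to the same work.)
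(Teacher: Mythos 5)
Your argument follows the same route as the paper's proof: use Lemma~\ref{lem:sym-neighborhood} and the isomorphism~(\ref{equ:isomorphism}) to identify neighborhoods of $D$, define the chart-wise involutions on $\mc{X}_{\pm}$ and $\mc{X}_{\tn{neck}}$ exactly as the paper does, verify they glue via~(\ref{chart-gluing}) and cover conjugation via~$\la$, and observe that the interpolation cutoff is $\phi_{\mc{X}}$-equivariant so the merged form stays anti-invariant. The only cosmetic difference is that you note $\eta=\eta(|x|)$ is automatically symmetric once the Hermitian data are chosen $(\phi_+)_*$-compatibly, whereas the paper simply stipulates choosing a symmetric $\eta$; these amount to the same thing.
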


\begin{proof}
For the purposes of the symplectic sum construction, we identify neighborhoods of $D$ in $X_{\pm}$ and in $\mc{N}_{D}^{X_{\pm}}$ as in Lemma~\ref{lem:sym-neighborhood} and use an isomorphism as in (\ref{equ:isomorphism}).
With notation as in Section~\ref{sec:sum}, the involutions $\phi_{\pm}$ on $X_{\pm}$ extend to $\mc{X}$ by 
\begin{alignat*}{2}
 \mc{X}_{\pm}      &\to \mc{X}_{\pm},          &\qquad (p,z)       &\to (\phi_{\pm}(p),\bar{z}),\\
 \mc{X}_{\tn{neck}}&\to\mc{X}_{\tn{neck}},    &\qquad (p,x_+,x_-)     &\to (\phi_{\pm}(p),(\phi_+)_*x_+,(\phi_-)_*x_-).
\end{alignat*}
By (\ref{chart-gluing}) and (\ref{equ:isomorphism}), these involutions agree on the overlaps and are intertwined by $\la$ with the conjugation on $\De$. 
For the resulting involution $\phi_{\mc{X}}$ on $\mc{X}$ to be compatible with the symplectic structure on $\mc{X}$, we choose the bump function $\eta$ used in the merging procedure to be symmetric with respect to $\phi_{\mc{X}}$. 
\end{proof}

The statement of Proposition~\ref{prop:main-space} concerning $\mc{J}_{\phi_{\mc{X}}}$ follows from the proofs of \cite[Lemma 2.3]{IP2} and \cite[Theorem A.2]{IP1}, since each step in the proofs is compatible with the involution.

A $(D,\phi)$-compatible almost complex structure on $\mc{X}$ can also be constructed by viewing $X_{\pm}$ as symplectic cuts of $X=\mc{X}_{\la}$
for some $\la\in\De^*$. Start with an almost complex structure $J$ on $X$ which is compatible with the involution and the $S^1$-action on $U$. We know that $\mc{X}_{\tn{neck}}= \tilde{h}^{-1}(a)/S^1$, where $$\tilde{h}\colon U\times \C \times \C \to \R,\qquad \tilde{h}(p,x,y)=h(p)-\frac{1}{2}|x|^2+\frac{1}{2}|y|^2,$$
is the moment map. 
The almost complex structure $J\oplus j\oplus j$ on $U\times\C\times\C$, where $j$ is the standard complex structure on $\C$, 
induces an almost complex structure $J_{\tn{neck}}$ on $\mc{X}_{\tn{neck}}$, which has the required compatibility properties. 
There is also a natural extension of $J$ to an almost complex structure $J_{\pm}$ on~$\mc{X}_{\pm}$. 
Merging the corresponding metrics, $\om_{\tn{neck}}(\cdot,J_{\tn{neck}}\cdot)$ and $\om_{\pm}(\cdot,J_{\pm}\cdot)$,
away from $D$ and applying the polarization procedure of \cite[Appendix]{IP1}, 
we get an almost complex structure on the total space.\\

\section{J-holomorphic discs and open GW invariants\label{ch:holomorphicdiscs}}

Throughout this section $(X,\omega,\phi,L)$ denotes a symplectic manifold equipped with an antisymplectic involution $\phi$ whose fixed-point set is a Lagrangian $L$. We assume that $L$ is orientable and spin and fix an orientation and a spin structure $\sigma$ on $L$. In this case, open invariants are defined in \cite{S} using perturbed Cauchy-Riemann equations. In Section~\ref{open-invariants-review} below, we review the construction of these invariants in the language of Kuranishi structures. In Section~\ref{relative-open-invariants}, we outline the construction of a relative version of open invariants.

\subsection{Review of open GW invariants}\label{open-invariants-review}

Let $\mc{J}_{\phi}$ be the space of $(\om,\phi)$-compatible almost complex structures. For $J\in \mc{J}_{\phi}$, the involution $\phi$ on $X$ induces an involution
\begin{equation} \label{tau-M}
\begin{split}
&\tau_{\mc{M}} \colon \mc{M}_{k,l}^{\disc}(X,L,\beta) \to \mc{M}_{k,l}^{\disc}(X,L,\beta),\\
& \tau_{\mc{M}} ([u,\vec{z},\vec{w}]) = [\phi\circ u \circ c,(\ov{z}_1,\ov{z}_{k},\ov{z}_{k-1},\cdots,\ov{z}_2), (\bar{w}_1,\cdots,\bar{w}_l)],
\end{split}
\end{equation}
where $c(z)=\bar{z}$. It naturally extends to maps with bubble domain, inducing an involution on $\ov{\mc{M}}_{k,l}^{\disc}(X,L,\beta)$. We call a Kuranishi structure on $\ov{\mc{M}}_{k,l}^{\disc}(X,L,\beta)$ \textsf{$\tau_{\mc{M}}$-invariant} if $\tau_{\mc{M}}$ extends to a map on Kuranishi neighborhoods and multisections. 


\begin{proposition}[{\cite{FO-3},\cite[Chapter 7]{FOOO}\label{kur-str-1}}]
Let $(X,\omega,\phi)$ be a symplectic manifold with a real structure. The moduli space $\ov{\mc{M}}_{k,l}^{\disc}(X,L,\beta)$ has a topology with respect to which it is compact and Hausdorff. It has a $\tau_{\mc{M}}$-invariant oriented Kuranishi structure with boundary and with virtual real dimension $$\dim^{\vir}(\ov{\mc{M}}_{k,l}^{\disc}(X,L,\beta))=\dim_{\C}X+\mu(\beta)+k+2l-3.$$ 
The codimension one boundary components of $\ov{\mc{M}}^{\disc}_{k,l}(X,L,{\beta})$ are described by (\ref{boundary-equ1}) and (\ref{boundary-equ2}) in a way that respects the Kuranishi structures. A spin structure $\sigma$ on $L$  determines an orientation of $\ov{\mc{M}}_{k,l}^{\disc}(X,L,\beta)$.
\end{proposition}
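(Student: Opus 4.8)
The plan is to combine the standard construction of Kuranishi structures on moduli spaces of bordered stable maps with the extra symmetry coming from the antisymplectic involution, and to track orientations via the spin structure on $L$. I would not reprove the compactness and Hausdorffness statement from scratch: the topology on $\ov{\mc{M}}_{k,l}^{\disc}(X,L,\beta)$ and the Gromov-type compactness theorem for bordered stable maps are established in \cite{FOOO}, so I would quote \cite[Chapter 7]{FOOO} for these. The real work is in (i) producing a $\tau_{\mc{M}}$-invariant Kuranishi structure, (ii) identifying the codimension-one boundary as in (\ref{boundary-equ1}) and (\ref{boundary-equ2}) compatibly with the Kuranishi data, and (iii) orienting the structure from the spin structure $\sigma$.

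First I would recall the construction of the Kuranishi neighborhoods. Around a stable map $[u,\Sigma,\vec{w},\vec{z}]$ one chooses a finite-dimensional obstruction space $E_u \subset \Omega^{0,1}(\Sigma; u^*TX)$ (supported away from nodes and marked points, invariant under the automorphisms of the domain), solves the $E_u$-perturbed Cauchy--Riemann equation modulo gluing parameters, and obtains a Kuranishi chart $(V_u, E_u, \Gamma_u, \psi_u, s_u)$. The virtual dimension count $\dim_{\C}X+\mu(\beta)+k+2l-3$ is the index of the linearized operator: the real linearization of $\bar\partial$ on a disc with totally real boundary condition $u^*TL$ has index $\dim_{\C}X + \mu(\beta)$ by the Riemann--Roch formula of \cite[Appendix C]{MS2}, and one adds $k$ (boundary points), $2l$ (interior points), and subtracts $3 = \dim \mathrm{Aut}(D^2,S^1)$. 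To make the structure $\tau_{\mc{M}}$-invariant I would build the charts equivariantly: for a stable map $w$ with $\tau_{\mc{M}}(w) = w'$, choose $E_{w'} := \phi_* (E_w)$ pulled back by $c$, so that $\phi$ and $c$ conjugate the chart at $w$ to the chart at $w'$; for a map fixed by $\tau_{\mc{M}}$ one averages to get a $\tau_{\mc{M}}$-invariant obstruction space. Since the construction of \cite{FO-3} is local and functorial in the input data, this equivariant choice propagates through the patching of charts, yielding a $\tau_{\mc{M}}$-invariant Kuranishi structure; multisections can likewise be chosen $\tau_{\mc{M}}$-invariantly by averaging.

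The boundary description is read off from the stratification of the compactified moduli space. A boundary point corresponds to a domain with at least one boundary node; the two local models — a boundary node joining two disc components, and an interior node where a sphere bubbles off (which forces an additional boundary marked point on $L$ to preserve stability, and occurs only when $k=0$ and $\beta$ lifts to $H_2(X)$) — give precisely (\ref{boundary-equ1}) and (\ref{boundary-equ2}), with the $\Z_2$ in (\ref{boundary-equ1}) accounting for the symmetry exchanging the two disc factors when $k=l=0$ and $\beta_1=\beta_2$. Compatibility with the Kuranishi structure means the restriction of the ambient obstruction bundle to the boundary stratum splits as a direct sum of the obstruction bundles of the factors (up to the finite gluing data), which again follows because $E_u$ is chosen to vanish near nodes. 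Finally, for the orientation: the determinant line of the linearized operator on a bordered Riemann surface with boundary condition a \emph{relatively spin} bundle pair has a canonical trivialization induced by the spin structure, by the pinching/degeneration argument of \cite[Section 8]{FOOO}; applying this fiberwise over each chart and checking the patching maps preserve it gives a global orientation of the Kuranishi structure, and one checks it agrees under $\tau_{\mc{M}}$ up to the sign computed in \cite[Chapter 8]{FOOO}.

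The main obstacle is step (ii) combined with the equivariance demand of step (i): one must make the obstruction-space choices simultaneously compatible with the recursive boundary structure (so that the Kuranishi data on a boundary stratum is the fiber product of the data on the lower strata) \emph{and} with $\tau_{\mc{M}}$, while also respecting the finite automorphism groups $\Gamma_u$ of the domains (which can jump along the boundary). This is exactly the delicate bookkeeping carried out in \cite{FO-3} and \cite[Chapter 7]{FOOO}; I would organize it by induction on the "depth" of strata (number of nodes), extending compatible, $\tau_{\mc{M}}$-invariant Kuranishi data from the boundary inward using the collar structure, and invoking the relevant existence lemmas of those references at each stage rather than redoing them.
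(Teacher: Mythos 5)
Your proposal matches the paper's approach. The proposition itself is cited from \cite{FO-3} and \cite[Chapter~7]{FOOO}, and the paper's Appendix~A outlines the same construction (for the relative variant, noting it covers all other moduli spaces here): $\tau_{\mc{M}}$-equivariant obstruction spaces via the condition $\tilde{T}^1_{\phi}(\mc{E}_u)=\mc{E}_{\tilde u}$ (your $E_{w'}=\phi_*(E_w)$ pulled back by $c$), fiber-product Kuranishi charts at nodal strata built inductively with obstruction spaces enlarged to make evaluation maps submersive, gluing via a collar parameter, and spin-induced orientations via determinant lines as in \cite[Section~8]{FOOO}. One small inaccuracy worth fixing: in the sphere-bubbling stratum (\ref{boundary-equ2}) there is no boundary marked point and no interior node — the configuration is a single $\P^1$ with an extra \emph{interior} marked point whose image is constrained to lie in $L$, i.e.\ $\mc{M}_{1+l}(X,\tilde\beta)\times_{\ev_1}L$; your phrasing "interior node where a sphere bubbles off" with "an additional boundary marked point on $L$" misdescribes this stratum, though the rest of your argument does not depend on that misstatement.
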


Let $\ov{\mc{M}}_{k,l}^{\disc}(X,L,\beta)^{\sigma}$ denote the moduli space equipped with the orientation induced by $\sigma$. We are interested primarily in manifolds of real dimension six. Since the tangent bundle of every orientable manifold $L$ of dimension three is trivial, $L$ is automatically spin. A choice of trivialization of the tangent bundle of $L$ determines an orientation on $\mc{M}_{k,l}^{\disc}(X,L,\beta)$. Therefore, in this case by a spin structure we simply mean a choice of trivialization of $TL$.

If $(X,L)$ has vanishing Maslov class and real dimension six, then 
$$\ov{\mc{M}}^{\disc}(X,L,\beta)\equiv \ov{\mc{M}}^{\disc}_{0,0}(X,L,\beta)$$ 
has virtual dimension zero. We would like to define invariants by counting the number of elements in $\ov{\mc{M}}^{\disc}(X,L,\beta)$. Fix a choice of $\tau_{\mc{M}}$-invariant multisection~$\mf{s}$ (see \cite[Section 7]{S}) whose zero locus is close to that of the Kuranishi map. Let $[\ov{\mc{M}}^{\disc}(X,L,\beta)^{\sigma}]^{\mf{s}}$ be the virtual fundamental class determined by the multisection~$\mf{s}$. Since the moduli space is zero-dimensional, its degree is a rational number, which we denote by $N^{\disc}_{\beta,J,\mf{s}}$; a priori it depends on $J$ and~$\mf{s}$.

Given two different choices of $(J_i,\mf{s}_{i})$, $i=0,1$, let $\left\{J_t \in \mc{J}_{\phi}\right\}$, $t \in [0,1]$, be a path of almost complex structures joining $J_0$ and $J_1$. Let 
$$\pi \colon \ov{\mc{M}}^{\disc}(X,L,\left\{J_t\right\},\beta)\equiv \coprod_{t\in[0,1]} \ov{\mc{M}}^{\disc}(X,L,J_t,\beta)  \to [0,1]$$
be the projection map. There is an analogue of Proposition \ref{kur-str-1} for $\ov{\mc{M}}^{\disc}(X,L,\left\{J_t\right\},\beta)$. In fact, $\partial^1 \ov{\mc{M}}^{\disc}(X,L,\left\{J_t\right\},\beta)$ is a union of $\ov{\mc{M}}^{\disc}(X,L,J_i,\beta)$ and the boundary terms of the form (\ref{boundary-equ1}) and (\ref{boundary-equ2}). 

Choose a $\tau_{\mc{M}}$-invariant multisection $\mf{s}$ for $\ov{\mc{M}}^{\disc}(X,L,\left\{J_t\right\},\beta)$ such that $\mf{s}|_{\pi^{-1}(i)}=\mf{s}_i$ and let $[\ov{\mc{M}}^{\disc}(X,L,\left\{J_t\right\},\beta)^{\sigma}]^{\mf{s}}$ be the one-dimensional fundamental chain of $\mf{s}$. Then,
$$\partial{[\ov{\mc{M}}^{\disc}(X,L,\left\{J_t\right\},\beta)^{\sigma}]^{\mf{s}}} = [\partial{\ov{\mc{M}}}^{\disc}(X,L,\left\{J_t\right\},\beta)^{\sigma}]^{\mf{s}}$$
and 
\begin{equation}\label{boundary}
\begin{split}
&N^{\disc}_{\beta,J_1,\mf{s}_1}-N^{\disc}_{\beta,J_0,\mf{s}_0}= \sum_{\tilde{\beta}\in j^{-1}(\beta)}\# [\mc{M}_1(X,\left\{J_t\right\},\tilde\beta)\times_{\ev_1} L]^{\mf{s}} \\
&+ \sum_{\beta_1+\beta_2=\beta} \# [\mc{M}_{1,0}^{\disc}(X,L,\left\{J_t\right\},\beta_1)^{\sigma} \times_{(\ev^B_1,\ev^B_1)} \mc{M}_{1,0}^{\disc}(X,L,\left\{J_t\right\},\beta_2)^{\sigma}]^{\mf{s}}.
\end{split}
\end{equation}
We would like to see if the right-hand side of this equation vanishes. For this, define an involution $\tau_{\mf{glue}}$ on $\mc{M}_{1,0}^{\disc}(X,L,\left\{J_t\right\},\beta_1) \times_{(\ev^B_1,\ev^B_1)} \mc{M}_{1,0}^{\disc}(X,L,\left\{J_t\right\},\beta_2)$ by
$$ (u_1,u_2)\to (u_1,\tau_{\mc{M}}(u_2)).$$
\begin{remark}
Every 
$$(u_1, u_2) \in \mc{M}_{1,0}^{\disc}(X,L,\left\{J_t\right\},\beta_1)\times_{(\ev^B_1,\ev^B_1)} \mc{M}_{1,0}^{\disc}(X,L,\left\{J_t\right\},\beta_2)$$  is also an element 
$$(u_2, u_1)\in \mc{M}_{1,0}^{\disc}(X,L,\left\{J_t\right\},\beta_2)\times_{(\ev^B_1,\ev^B_1)} \mc{M}_{1,0}^{\disc}(X,L,\left\{J_t\right\},\beta_1).$$ Therefore, $\tau_{\mf{glue}}$ is not well-defined by the above. In order to avoid this ambiguity, we will assume that $\beta$ is odd ($\beta \neq 2\beta'$); therefore, $\beta_i$ are different and we can fix the class that we flip. Moreover, if we assume $H_2(L)=\Z_2$, then for $\beta=\beta_1+\beta_2$ with $\partial\beta \neq 0$, there is a unique one with $\partial\beta_i\neq 0$ and we can decide to always flip this one. Note that in this case, all boundary strata are of disc bubbling type.
\end{remark}

\begin{proposition}[{\cite[Theorem 4.9]{FO-3}}]\label{cancellation}
Let $(X,\om,\phi)$ be a symplectic manifold of real dimension six with a real structure. Suppose $L=\Fix(\phi)$ is orientable and $c_1(TX)=0$. Then $\tau_{\mf{glue}}$ is an orientation-reversing involution.
\end{proposition}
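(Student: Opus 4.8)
The plan is to reduce the statement to a sign computation for the orientation on the fiber product appearing in \eqref{boundary} and then track how $\tau_{\mf{glue}}$ acts on that orientation. First I would recall that the orientation on $\ov{\mc{M}}^{\disc}_{1,0}(X,L,\{J_t\},\beta_i)^{\sigma}$ is the one induced by the spin structure $\sigma$ (here, by a trivialization of $TL$), together with the interval parameter $t$; since $c_1(TX)=0$ and $\dim_{\C}X=3$, each disc moduli space has virtual dimension $\mu(\beta_i)+1-2=\mu(\beta_i)-1$, so in particular $\mu(\beta_1)+\mu(\beta_2)=\mu(\beta)$ and the total fiber product over $L$ at the two boundary evaluation points has virtual dimension $\mu(\beta)-1+1 = \mu(\beta)$, wait — recount: the relevant strata in \eqref{boundary} live in a $1$-dimensional moduli space, so $\dim\bigl(\mc{M}^{\disc}_{1,0}(\beta_1)\times_L\mc{M}^{\disc}_{1,0}(\beta_2)\bigr)=1$, which forces $\mu(\beta)=n=3$ on the nose (consistent with the disc counting problem being $0$-dimensional before the extra marked/interval point). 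The key input is the gluing/decomposition formula for orientations of the boundary of $\ov{\mc{M}}^{\disc}$ from \cite[Chapter 8]{FOOO}: the boundary orientation of the disc-bubbling stratum \eqref{boundary-equ1} differs from the product orientation (spin-induced on each factor, fiber product over $L$) by an explicit universal sign depending only on the Maslov indices $\mu(\beta_1),\mu(\beta_2)$, the dimension $n$, and the number of boundary marked points. I would quote this sign in the form $(-1)^{\epsilon(\mu(\beta_1),\mu(\beta_2),n)}$.

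Next I would compute the effect of $\tau_{\mf{glue}}\colon(u_1,u_2)\mapsto(u_1,\tau_{\mc{M}}(u_2))$ on orientations. The map $\tau_{\mc{M}}$ acts on $\ov{\mc{M}}^{\disc}_{1,0}(X,L,\beta_2)^{\sigma}$, and by \cite[Proposition 8.3.3 or Lemma 8.4.5]{FOOO} (the behavior of the spin-induced orientation under the conjugation involution), $\tau_{\mc{M}}$ multiplies that orientation by a universal sign depending on $\mu(\beta_2)$ and $n$ — roughly $(-1)^{(\mu(\beta_2)/2)+\cdots}$ together with a contribution from reversing the boundary circle and fixing $\ev^B_1$. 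Crucially $\tau_{\mc{M}}$ commutes with $\ev^B_1$, so it descends to the fiber product without disturbing the $L$-factor along which we glue, and it leaves the $[0,1]$-parameter and the $u_1$-factor untouched. Assembling: the sign by which $\tau_{\mf{glue}}$ changes the product orientation on $\mc{M}^{\disc}_{1,0}(\beta_1)\times_L\mc{M}^{\disc}_{1,0}(\beta_2)$ is the $\tau_{\mc{M}}$-sign on the $\beta_2$-factor; the sign by which it changes the \emph{boundary} orientation is that, corrected by the difference between the product and boundary orientations before and after — but since $\beta_1+\beta_2=\beta$ and $\mu(\beta_1)+\mu(\beta_2)=\mu(\beta)=3$ is odd, swapping which of $\mu(\beta_1),\mu(\beta_2)$ is odd changes the parity, and I expect the total accumulated sign to come out to $-1$ for every decomposition, giving that $\tau_{\mf{glue}}$ is orientation-reversing.

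The main obstacle will be bookkeeping the orientation conventions consistently: the decomposition formula for boundary orientations of disc moduli spaces in \cite{FOOO} involves several conventions (order of fiber-product factors, placement of the boundary node relative to marked points, the direction of the outward normal at the disc-bubbling boundary, and the convention relating the trivialization of $TL$ to the spin-induced orientation), and the involution $\tau_{\mc{M}}$ both reverses the orientation of the domain disc's boundary and conjugates the complex structure, so its effect on $\det(D\bar\partial)$ must be read off carefully from \cite[Chapter 8]{FOOO}. I would isolate this into one lemma: \emph{for $n=3$, $c_1=0$, the spin-induced orientation on $\ov{\mc{M}}^{\disc}_{1,0}(X,L,\gamma)^{\sigma}$ is reversed by $\tau_{\mc{M}}$ iff $\mu(\gamma)$ is $\equiv$ (some fixed residue) mod $4$}, prove it by the doubling/index-comparison argument of \cite[Section 8.4]{FOOO} applied to the disc with one boundary marked point, and then the Proposition follows by combining that lemma with the boundary-orientation decomposition formula and the constraint $\mu(\beta_1)+\mu(\beta_2)=3$, since exactly one of the two Maslov indices is odd and the two orderings contribute opposite signs. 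I would also remark that the hypothesis that $\beta$ is odd (as in the Remark preceding the Proposition) is what makes $\tau_{\mf{glue}}$ well defined, so that the orientation-reversal statement has content: it then pairs up the strata in \eqref{boundary} in orientation-reversing pairs and the right-hand side of \eqref{boundary} vanishes.
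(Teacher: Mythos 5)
Note first that the paper does not prove this proposition: it is imported verbatim from \cite[Theorem 4.9]{FO-3}, so there is no internal proof to compare against; your task is really to reconstruct the argument of \cite{FO-3}. Your conceptual framework --- determine the sign by which $\tau_{\mc{M}}$ flips the spin-induced orientation on one factor, combine it with the boundary-orientation decomposition of \cite{FOOO}, and conclude that the total is $-1$ --- does match the cited source in outline.

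The execution, however, rests on a concrete numerical error on which your conclusion depends. From the dimension formula $\dim^{\vir}\mc{M}^{\disc}_{k,l}(X,L,\beta)=\dim_{\C}X+\mu(\beta)+(k+2l)-3$, the relevant zero-dimensional disc count in dimension six with $c_1(TX)=0$ lives where $\mu(\beta)=0$, not $\mu(\beta)=3$ as your "recount" produces (and the one-parameter family moduli space is then $1$-dimensional with $0$-dimensional disc-bubbling boundary, consistent with $\mu(\beta)=0$). Moreover, since $L$ is orientable, every class $\gamma\in H_2(X,L)$ has \emph{even} Maslov index $\mu(\gamma)$; in the Calabi--Yau/antisymplectic-involution setting the Maslov class of $L$ in fact vanishes, so $\mu(\beta_1)=\mu(\beta_2)=0$. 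Thus your culminating step --- "exactly one of the two Maslov indices is odd and the two orderings contribute opposite signs" --- is false as stated, and you have also conflated two distinct notions of "odd": the Remark preceding the proposition assumes $\beta\neq 2\beta'$ in $H_2(X,L)$ so that $\beta_1\neq\beta_2$ and $\tau_{\mf{glue}}$ is well defined; it says nothing about parities of Maslov indices. The orientation reversal cannot be extracted from a Maslov-parity heuristic. It has to come out of the index-theoretic effect of conjugating the domain (reversing $\partial D^2$, moving the boundary marked point, and acting on $\det D\bar\partial$) combined with the universal boundary-orientation sign in the fiber product over $L$, which is precisely the computation carried out in \cite[Section 4]{FO-3}. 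You would need to reproduce or cite that computation; the shortcut you propose does not apply.
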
 

\begin{corollary}
If $\beta$ is odd, the terms on the right-hand side of (\ref{boundary}) come in pairs with opposite signs. Therefore, the right-hand side of (\ref{boundary}) is zero, and the numbers $N^{\disc}_{\beta,J,\mf{s}}$ are independent of $J$ and of the multisection~$\mf{s}$.
\end{corollary}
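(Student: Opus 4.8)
The plan is to deduce that the right-hand side of (\ref{boundary}) vanishes; given this, $N^{\disc}_{\beta,J_1,\mf{s}_1}=N^{\disc}_{\beta,J_0,\mf{s}_0}$ for every pair of choices $(J_i,\mf{s}_i)$ and every path of almost complex structures joining them, which is precisely the asserted independence of $J$ and $\mf{s}$. First I would note that, as observed in the remark above, when $\beta$ is odd all boundary strata of $\ov{\mc{M}}^{\disc}(X,L,\{J_t\},\beta)$ are of disc-bubbling type --- equivalently, $\beta$ is not in the image of $j\colon H_2(X)\to H_2(X,L)$ --- so the first sum in (\ref{boundary}) is vacuous. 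It remains to show that the disc-bubbling sum, over decompositions $\beta=\beta_1+\beta_2$, is zero.

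For this I would use the involution $\tau_{\mf{glue}}$ introduced above. Since $\beta$ is odd we have $\beta_1\neq\beta_2$ for every decomposition, so there is an unambiguous choice of which factor to conjugate (the remark above), and $\tau_{\mf{glue}}$ is a well-defined involution of
$$\coprod_{\beta_1+\beta_2=\beta}\mc{M}_{1,0}^{\disc}(X,L,\{J_t\},\beta_1)^{\sigma}\times_{(\ev^B_1,\ev^B_1)}\mc{M}_{1,0}^{\disc}(X,L,\{J_t\},\beta_2)^{\sigma}$$
preserving each $(\beta_1,\beta_2)$-stratum --- here one uses that $\tau_{\mc{M}}$ preserves the Maslov index and the symplectic area, hence the $\sim$-class of the image. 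Next I would choose the multisection $\mf{s}$ on $\ov{\mc{M}}^{\disc}(X,L,\{J_t\},\beta)$ with $\mf{s}|_{\pi^{-1}(i)}=\mf{s}_i$ in such a way that its restriction to each disc-bubbling face is a product $\mf{s}_{\beta_1}\times\mf{s}_{\beta_2}$ of $\tau_{\mc{M}}$-invariant single-disc multisections; this is consistent with the fiber-product description of the codimension-one boundary in Proposition \ref{kur-str-1} and can be arranged by the Kuranishi construction of \cite{FO-3} and \cite[Section 7]{S}. Since $\tau_{\mf{glue}}$ applies $\tau_{\mc{M}}$ to one factor and the identity to the other, this restriction of $\mf{s}$ is $\tau_{\mf{glue}}$-invariant, and the virtual $0$-chain it cuts out (a codimension-one face of the $1$-dimensional virtual cobordism) is a compact oriented virtual $0$-manifold carried into itself by $\tau_{\mf{glue}}$.

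Now I would invoke Proposition \ref{cancellation}: under the standing hypotheses of this section ($\dim_{\R}X=6$, $L=\Fix(\phi)$ orientable, $c_1(TX)=0$) the involution $\tau_{\mf{glue}}$ reverses orientation. Hence the signed count of the $\tau_{\mf{glue}}$-invariant virtual $0$-chain above equals its own negative and therefore vanishes; concretely, $\tau_{\mf{glue}}$ has no fixed points and its points are paired off with opposite signs. Summing over all decompositions $\beta=\beta_1+\beta_2$ shows the disc-bubbling sum in (\ref{boundary}) is zero, and together with the vanishing of the sphere-bubbling sum this gives that the entire right-hand side of (\ref{boundary}) is zero, completing the argument.

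The hard part is not the sign, which is supplied by Proposition \ref{cancellation}, but the chain-level bookkeeping that makes $\tau_{\mf{glue}}$ an honest orientation-reversing involution: one must check that oddness of $\beta$ simultaneously rules out the symmetric decomposition $\beta_1=\beta_2$ (for which $\tau_{\mf{glue}}$ would be ill-defined) and the sphere-bubbling strata, and that a multisection extending the prescribed boundary data $\mf{s}_i$ can be chosen $\tau_{\mf{glue}}$-invariant while remaining compatible with the product structure of the Kuranishi boundary --- so that $\tau_{\mc{M}}$-invariance of the factor multisections forces $\tau_{\mf{glue}}$-invariance of their product. Once these compatibilities are in place the cancellation is formal.
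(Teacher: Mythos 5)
Your proof is correct and follows essentially the route the paper intends: the corollary is an immediate consequence of Proposition~\ref{cancellation} together with the preceding remark on $\tau_{\mf{glue}}$, exactly as you argue, so this is the same approach. Two points of precision are worth flagging. You assert, citing the remark, that oddness of $\beta$ by itself makes the sphere-bubbling sum vacuous (``equivalently, $\beta$ is not in the image of $j$''); in fact the remark attributes the absence of sphere bubbling to $\partial\beta\neq 0$, not to oddness, and oddness alone does not imply $\beta\notin\tn{im}(j)$ --- for $L\cong S^3$ one has $H_1(L)=0$, so $\partial\beta=0$ and $\beta\in\tn{im}(j)$ for every $\beta$, odd or not. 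Oddness is only used to ensure $\beta_1\neq\beta_2$; the corollary must be read under the paper's standing hypothesis $\partial\beta\neq 0$ (as the sentence immediately following it confirms), and with that in place your conclusion that the first sum vanishes is correct. Second, the aside that $\tau_{\mf{glue}}$ ``has no fixed points'' is neither needed nor self-evident on a Kuranishi zero set with nontrivial obstruction bundle; your preceding sentence --- that the orientation-reversing involution carries the invariant virtual $0$-chain to minus itself, forcing its signed count to vanish --- is the complete and correct justification and should stand on its own.
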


If $(X,\omega,L,\phi)$ is as before and $\partial{\beta}\neq 0$, the numbers $N^{\disc}_{\beta}= N^{\disc}_{\beta,J,\mf{s}}$ defined above are called \textsf{open} Gromov-Witten invariants of $(X,L)$ in the class $\beta \in H_2(X,L)/\sim$. These numbers are independent of the choices of real almost complex structure $J$, of $\phi$-compatible Kuranishi structure, multisection $\mf{s}$, and of isotopy class of antisymplectic involution fixing $L$. 
In a similar fashion, one can define \textsf{open} GW invariants for other symplectic manifolds and with marked points.

\subsection{Relative open GW invariants}\label{relative-open-invariants}

Let $(X,\om,\phi)$ be as before and $D\subset X$ be a smooth symplectic divisor invariant under $\phi$ such that $L\cap D=\emptyset$. The definition of relative open GW invariants is a combination of the definitions of open GW invariants and of ordinary relative GW invariants. In Appendix \ref{ch:kuranishi}, we outline the construction of Kuranishi structures for the compactified relative open moduli spaces. We use these relative moduli spaces to derive a sum formula, as done in \cite{IP2} for closed GW invariants, to relate the open GW invariants of $(X,L)$ defined in Section~\ref{open-invariants-review} and the ordinary relative GW invariants of $(X_+,D)$.

\begin{definition}\label{D-compatible}
An almost complex structure $J\in \mc{J}_{\phi}$ is said to be compatible with $D$ if $J$ preserves $TD$ and
$$N_J(\xi,v)\in T_xD \qquad \forall v\in T_xD,\; \xi \in T_xX,\;x\in D,$$
where $N_J$ is the Nijenhuis tensor of $J$.
\end{definition}

A $J$-holomorphic map $u\colon (\Sigma,\partial\Sigma) \to (X,L)$ is called $D$-\textsf{regular} if it has no components mapped into $D$. If $J\in \mc{J}_{\phi}$ is $D$-compatible, a $D$-regular $J$-holomorphic map intersects $D$ in a finite set $(p_1,\cdots,p_k)$ of points  with positive multiplicities $(s_1,\cdots,s_k)$, just as in the holomorphic situation and $s_1+\cdots+s_k=[u]\cdot D$. The vector $\rho=(s_1,\cdots,s_k)$ is called the \textsf{intersection pattern}. Since $L \cap D= \emptyset$, all intersection points are interior points. In relative open GW theory, we are interested in the moduli space $\mc{M}_{k,l}^{\disc}(X,L,D,\rho,\beta)$, whose elements are $[(\Sigma,\partial\Sigma),u,\vec{z},\vec{w},\vec{\xi}]$, where
\begin{itemize}
\item $u\colon (\Sigma,\partial\Sigma) \to (X,L)$ is a $D$-regular genus zero $J$-holomorphic map,
\item $\vec{z}$ and $\vec{w}$  are tuples of $k$ boundary and $l$ interior marked points, respectively,
\item $[u]=\beta \in H_2(X,L)$ and $u^{-1}(D)=\sum s_i \xi_i$, i.e.  $\vec{\xi}$ is the set of ordered marked points corresponding to intersection points with $D$ with contact of order $s_i$ at~$u(\xi_i)$,
\end{itemize}
such that the marked map $(u,\Si,\partial\Si,\vec{w},\vec{z},\vec{\xi})$ is stable.

We next describe a suitable compactification of this moduli space, denoted by $\ov{\mc{M}}_{k,l}^{\disc}(X,L,D,\rho,\beta)$, and an orientable closed virtual cycle with which to define GW invariants.

The limiting maps in the stable compactification of this moduli space might not be $D$-regular and might have several components mapping into $D$. Since $L\cap D=\emptyset$, all the components of a limiting curve which are mapped into $D$ are maps from closed curves attached to other components away from the boundary. So the definitions of relative stable maps in \cite{IP1} and \cite{Li} readily extend to this case.

The normal bundle $\mc{N}_{D}^{X}$ of $D$ in $X$  is a complex line bundle with an inner product and a compatible connection induced by the Riemannian connection on $X$. Define 
$$Y_D= \P(\mc{N}_{D}^{X}\oplus \C).$$
The bundle map $\iota \colon \mc{N}_{D}^{X} \to Y_D$ defined by $\iota(x) = [x,1]$ on each fiber is an embedding onto the complement of the infinity section $D_{\infty} \subset Y_D$. There is a $\C^*$-action on $Y_D$ which comes from scalar multiplication on $\mc{N}_{D}^{X}$. Over each point of $D$, we can identify the fiber of $Y_D$ with $\P^1$ and give it the K\"ahler structure $(\omega_{\ep},j)$ of the 2-sphere of radius~$\ep$. Then $\iota \colon \C \to Y_D$  is a holomorphic map with $\iota^* \omega_{\ep}= d\psi_{\ep} \wedge d\theta$, where
$$ \psi_{\ep}(r) = \displaystyle\frac{2\ep^2 r^2}{1+r^2}. $$
This construction globalizes by interpreting $r$ as the norm on the fibers of $\mc{N}_{D}^{X}$, replacing $d\theta$ by the connection 1-form $\alpha$ of $\mc{N}_D^X$, and including the curvature $F$ of that connection as in Section~\ref{sec:sum}. Thus,
$$ \iota^* \omega_{\ep} = \pi ^* \omega_D + d (\psi_{\ep} \wedge \alpha)$$
is a closed form which is nondegenerate for small $\ep$ and its restriction to each fiber of $\mc{N}_{D}^{X}$ agrees with the volume form on the 2-sphere of radius $\ep$. Furthermore, at each point $p \in \mc{N}_{D}^{X}$, the connection identifies $T_p\mc{N}_{D}^{X} $ with the fiber of $\mc{N}_{D}^{X}\oplus TD$ at $\pi(p)$ and thus induces a complex structure on $Y_D$. Note that we have two copies of $D$ inside $Y_D$, corresponding to the zero section and the section at infinity, which we denote by $D_0$ and $D_{\infty}$, respectively.

Let $X[n]$ be the singular space obtained by attaching $n$ copies of $Y_D$ to $X$ in such a way that the divisor $D_0$ of the $i$-th copy is attached to the divisor $D_{\infty}$ of the $(i+1)$-th copy; see Figure~\ref{Fig:singular}. Similar to Section \ref{ch:surgery}, $X[n]$ can be realized as the singular central fiber of a symplectic fibration $\pi\colon \mc{X}[n] \to \De ^n$ whose generic fiber is a smooth symplectic manifold isotopic to $X$. 

\begin{figure}
\begin{center}
\includegraphics[scale=.5]{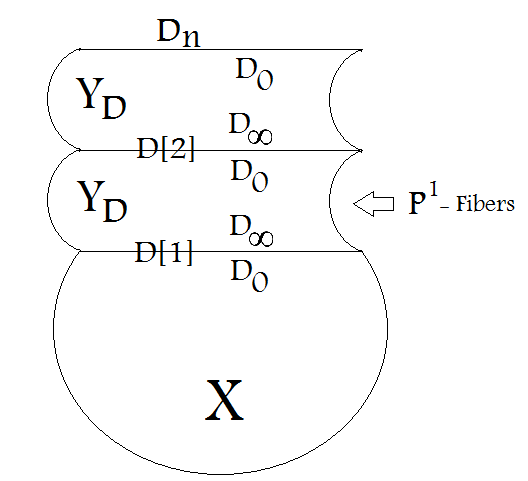}
\caption{The singular manifold $X[n]$.}\label{Fig:singular}
\end{center}
\end{figure}

Let $D_n$ be the last copy of $D_{0}$ in the sequence and $Y_D[0]=X$. For each $i=1,\cdots,n$, let $Y_D[i]$ be the $i$-th copy of $Y_D$ and $D[i]\subset Y_D[i]$ the $i$-th copy of $D_{\infty}$. The space $X[n]$ contains a copy of $L$ which lies in  $X$ itself and is disjoint from all $Y_D$. There is an action of $G_n=(\C^*)^n$ on $X[n]$ which comes from the $\C^*$ action on each copy of $Y_D$.

\begin{definition}\label{stable-relative-map}
A \textsf{stable relative} genus zero bordered $J$-holomorphic map to $X[n]$ is a tuple $[u,(\Si,\partial\Si),\vec{z},\vec{w},\vec{\xi}]$,
where 
\begin{enumerate}
\item $\Si = \Si_0\cup\cdots\cup\Si_n$ is a connected bordered nodal curve of arithmetic genus zero, $\Si_i$, $i \ge 1$, is a closed curve (not necessarily connected), and $\partial\Si\cong S^1$,
\item $\vec{z},\vec{w},\vec{\xi}$ are tuples of distinct smooth points on $\Si$, $\vec{z}$ is a tuple of boundary marked points in an anticlockwise order, $\vec{w}$ is a tuple of interior marked points, and $\vec{\xi}$ is a tuple of marked points on the last layer $\Si_n$,
\item $u\colon \Si_0 \to X$ is a $D$-regular $J$-holomorphic map into $X$ and $u\colon \Si_i \to Y_D[i]$ are $D$-regular $J_{Y_D}$-holomorphic maps,
\item $u^{-1}(D[i])= \left\{\xi_{i,1},\cdots,\xi_{i,j_i}\right\}$ is discrete for $i=1,\cdots,n$, each $\xi_{i,j}$ is a node of $\Si$ connecting $\Si_{i-1}$ and $\Si_{i}$, and $u\mid_{\Si_{i-1}}$ and $u\mid_{\Si_{i}}$ have same contact orders with $D[i]$,
\end{enumerate}
such that the automorphism group of $f=[u,(\Si,\partial\Si),\vec{w},\vec{z},\vec{\xi}],$ 
$$ \mf{Aut}(f)= \left\{(h,\si)\mid \si \in G_n,\; h \in \mf{Aut}(\Si,\vec{z},\vec{w},\vec{\xi}), \;\;\si\circ u=u\circ h\right\},$$ 
is finite.

\end{definition}

Let $\ov{\mc{M}}_{k,l}^{\disc}(X,L,D,\rho,\beta)$ denote the set of the equivalence classes of all bordered stable relative maps in class $\beta$ and with intersection pattern $\rho$. This moduli space is Hausdorff and compact. In Appendix \ref{ch:kuranishi}, we outline a construction of a virtual fundamental class for this space.

Let
$$\ev=(\ev^B_{\vec{z}},\ev_{\vec{w}},\ev_{\vec{\xi}}) \colon \ov{\mc{M}}_{k,l}^{\disc}(X,L,D,\rho,\beta)\to L^{k} \times X^{l} \times D^{l(\rho)},$$ 
where $l(\rho)= m$ if $\rho=(s_1,\cdots,s_m)$, be the total evaluation map. \textsf{Open relative GW invariants} are obtained by integrating pull-backs under $\ev$ of differential forms on $\ov{\mc{M}}_{k,l}^{\disc}(X,L,D,\rho,\beta)$. As in the absolute case, we encounter issues concerning codimension one boundaries and orientation. If $L$ is the fixed point set of an antisymplectic involution preserving $D$, we can use the same technique as in Section \ref{open-invariants-review} to define relative open invariants. The antisymplectic involution on $X$ extends to $X[n]$ and induces an involution on $\ov{\mc{M}}_{k,l}^{\disc}(X,L,D,\rho,\beta)$. Therefore, we can get a cancellation of boundary terms as in Proposition \ref{cancellation}.

\begin{remark}\label{Gamma}
We also need to consider relative invariants with disconnected domains,
$$(\Si,\partial\Si)= (\Si_0,\partial\Si_0) \cup \Si_1 \cup \cdots \cup \Si_k,$$ 
where $\Si_i$, $ 1\leq i \leq k$, has no boundary and $\partial\Si_0 \cong S^1$. We fix homology classes $\beta_0 \in H_2(X,L)/\sim$ and $\beta_i \in H_2(X)/\sim$, for $ 1\leq i \leq k$, with $ \beta=\beta_0+\sum \beta_i$. With $\Gamma$ denoting the above topological data, let $\ov{\mc{M}}_{k,l}^{\disc}(X,L,D,\rho,\Gamma)$ be the moduli space  of relative maps $u \colon (\Si,\partial\Si)\to (X[n],D_n)$ so that over each component $u$ has the given topological type. 
\end{remark}

\begin{example}\label{example1}
Let $(X,\omega,\phi,L,D)=(\C\P^3,\omega_{\tn{FS}},\tau_3,\R\P^3,Q)$, where $Q=Q^2$ is the quadratic hypersurface with the real defining equation
$$ x_0^2+x_1^2+x_2^2+x_3^2=0.$$
Since $H_2(\P^3, \R\P^3)\cong\frac{1}{2}\Z$, we write the elements of $H_2(X,L)$ by $[\frac{d}{2}]$. Note that $\mu([\frac{d}{2}])= 4d \equiv 0 \; \tn{mod}\; 4$. Let $\rho_0=(1,\cdots,1)$, $d$ be odd, and 
$$\ev_{\xi}\colon \ov{\mc{M}}^{\disc}_{0,0}(\P^3,\R\P^3,Q,\rho_0,[\frac{d}{2}])\to Q^d$$
be the evaluation map at the contact points. Consider the incidence condition 
$$\gamma:= \left\{p_1,\tau_3(p_1)\right\}\times \cdots \times \left\{p_d,\tau_3(p_d)\right\} \subset Q^d,$$
where $p_1,\cdots,p_d$ are $d$ general points in $Q$.  We choose $\gamma$ symmetric with respect to the involution in order to make $\ov{\mc{M}}^{\disc}_{0,0}(\P^3,\R\P^3,Q,\rho_0,[\frac{d}{2}])\times_{\ev} \gamma$ closed under the involution $\tau_{\mf{glue}}$. Then $\ov{\mc{M}}^{\disc}(\P^3,\R\P^3,Q,\rho_0,[\frac{d}{2}])\times_\ev\!\gamma$ has virtual dimension zero and virtually counts the number of holomorphic discs intersecting $Q$ at certain fixed points determined by $\gamma$. For $d$ odd, the rational number
$$
\alpha_d^{\tn{rel},\disc}= \frac{1}{2^d}\# [\ov{\mc{M}}^{\disc}_{0,0}(\P^3,\R\P^3,Q,\rho_0,[\frac{d}{2}])\times_{\ev} \gamma]^{\vir}
$$
is well-defined; for $d$ even, we need extra terms to make it invariant. Let $N_d^{\disc}$ be the ordinary open GW invariants defined by
\begin{equation}\label{disc-inv}
N_d^{\disc}= \frac{1}{2^d}\# [\ov{\mc{M}}^{\disc}_{0,d}(\P^3,\R\P^3,[\frac{d}{2}])\times_{\ev} \gamma]^{\vir}.
\end{equation}
Since $\P^3$ is Fano and $\rho_0=(1,\cdots,1)$, $\alpha_d^{\tn{rel},\disc} = N_{d}^{\disc}$,
where $N_d^{\disc}$ counts the number of degree $d$ disks in $\P^3$ passing through $d$ pairs of conjugate points.

For a non-connected domain $\Si=(\Si_0,\partial{\Si_0}) \cup \bigcup_{i=1}^{k} \Si_{k}$, let $\Gamma$ be a topological type as before with $\beta_0 \in H_2(\C\P^3,\R\P^3)$ and $\beta_{i} \in H_2(\C\P^3)$, for $i=1,\cdots,k$. Let $d_i=\mu(\beta_i)/4 \in \Z$ and  
\begin{equation}\label{incidence}
\gamma_{\Gamma}= \gamma_0 \times \gamma_{1} \times \cdots \times \gamma_{k},
\end{equation}
where $\gamma_0=\gamma$ as before and $\gamma_i=\left\{q_{i1}\right\}\times \cdots \times \left\{q_{id_i}\right\}$  is a single point in $Q^{d_i}$, whenever $i\geq 1$. We define
\begin{equation}\label{alpha-invariants}
\alpha_{\Gamma}^{\tn{rel},\disc}=\frac{1}{2^{d_0}} \# [\ov{\mc{M}}^{\disc}_{0,0}(\P^3,\R\P^3,Q,\rho_0,\Gamma) \times_{\ev} \gamma_{\Gamma}]^{\vir}.
\end{equation}
These numbers are invariant under deformations of almost complex structure and $\gamma_i$'s and are generalizations of the above invariants to non-connected domains. Since
$$\ov{\mc{M}}^{\disc}_{0,0}(\P^3,\R\P^3,Q,\rho_0,\Gamma)=\ov{\mc{M}}^{\disc}_{0,0} (\P^3,\R\P^3,Q,\rho_0,[\frac{d_0}{2}]) \times \prod_{i=1}^{k} \ov{\mc{M}}_{0,0}(\P^3,Q,\rho_0,[d_i]),$$
we find that
$$\alpha_{\Gamma}^{\tn{rel},\disc}=\alpha^{\tn{rel},\disc}_{d_0} \times \prod_{i=1}^{k} \alpha_{d_i}^{\tn{rel}}.$$
This shows that disconnected invariants reduce to connected ones. Again for dimensional reason, $\alpha_{\Gamma}^{\tn{rel},\disc}$ are equal to absolute open invariants $N_{\Gamma}^{\disc}$ defined by an equation similar to (\ref{disc-inv}). 

\end{example}

\section{Degeneration of moduli spaces\label{ch:degeneration}}

In this section, we build a cobordism between moduli spaces of holomorphic discs in a smooth fiber and the fiber product of moduli spaces of relative maps in the singular fiber in the fibration $\pi:\mc{X}\to \De$ constructed in Section \ref{ch:surgery}. Using this cobordism, we prove Theorems \ref{emptiness} and \ref{open-closed}.

\subsection{Proof of Theorem \ref{emptiness}}

Given $(X,\om,L)$ as in the statement of Theorem \ref{emptiness}, let $\pi\colon \mc{X} \to \Delta$ be the associated fibration constructed in Section \ref{ch:surgery}. 
Let $\mc{J}_{\mc{X}}^l$ be the set of compatible almost complex structures $J$ of class $C^l$ on $\mc{X}$, given by Proposition \ref{prop:main-space}. Restricted to $X_+$, any such $J$ is $D$-compatible in the sense of Definition \ref{D-compatible}. Let $\mc{M}^{\tn{reg}}(X_{+},A)$ be the moduli space of degree $A$ genus zero somewhere injective $J|_{X_+}$-holomorphic curves.
By \cite[Theorem 3.1.5]{MS2} and Proposition \ref{prop:main-space},
$$\dim_{\R}^{\vir}(\mc{M}^{\tn{reg}}(X_{+},A)) = 2(n-3) + -2(n-2)~[A]\cdot[D],$$
which is a negative number if $[A]\cdot [D] > 0$ and $n>2$.

\begin{lemma}\label{empty}
There is a dense subset $\mc{J}_{\mc{X}}^{l,\tn{reg}} \subset \mc{J}_{\mc{X}}^l$ such that $\mc{M}^{\tn{reg}}(X_{+},A)=\emptyset$ for every $J \in \mc{J}_{\mc{X}}^{l,\tn{reg}}$ and every $A \in H_2(X_{+},\Z)$ with $[A]\cdot [D] > 0$.
\end{lemma}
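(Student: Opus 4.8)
The plan is to apply the standard Sard--Smale transversality argument from \cite[Chapter 3]{MS2} to the moduli space $\mc{M}^{\tn{reg}}(X_+,A)$ of somewhere injective $J|_{X_+}$-holomorphic spheres, but carried out \emph{relative to the family} so that the perturbation stays inside the constrained class $\mc{J}_{\mc{X}}^l$ of admissible almost complex structures on $\mc{X}$ given by Proposition~\ref{prop:main-space}. First I would fix a countable exhaustion of $H_2(X_+,\Z)$ and, for each class $A$ with $[A]\cdot[D]>0$, introduce the universal moduli space $\mc{M}^{\tn{univ}}(X_+,A;\mc{J}_{\mc{X}}^l)$ of pairs $(u,J)$ with $u$ a somewhere injective $J|_{X_+}$-holomorphic sphere in class $A$. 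The key analytic input is that the linearization of the $\bar\partial$-operator in the $J$-direction, restricted to the admissible tangent space $T_J\mc{J}_{\mc{X}}^l$, is still surjective at somewhere injective curves: this is where one must check that the constraints defining $\mc{J}_{\mc{X}}$ (preserving $\ker d\pi$, restricting to $D$, and the Nijenhuis condition along $D$) do not obstruct the perturbation. Since a somewhere injective $J|_{X_+}$-holomorphic sphere in class $A$ with $A\cdot D>0$ is not contained in $D$, it has an injective point lying in $X_+\setminus D$, where the admissibility constraints are vacuous and one has the full freedom to perturb $J$; the usual cutoff construction produces the needed infinitesimal variations supported away from $D$ and away from $\ker d\pi$. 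Hence $\mc{M}^{\tn{univ}}(X_+,A;\mc{J}_{\mc{X}}^l)$ is a Banach manifold.

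Next I would invoke the Sard--Smale theorem applied to the projection $\mc{M}^{\tn{univ}}(X_+,A;\mc{J}_{\mc{X}}^l)\to\mc{J}_{\mc{X}}^l$ to conclude that for $J$ in a residual (hence dense) subset $\mc{J}_{\mc{X}}^{l,\tn{reg}}(A)$ the space $\mc{M}^{\tn{reg}}(X_+,A)$ is a smooth manifold of dimension equal to the virtual dimension
$$\dim_\R^{\vir}\mc{M}^{\tn{reg}}(X_+,A)=2(n-3)-2(n-2)\,[A]\cdot[D],$$
computed above from \cite[Theorem~3.1.5]{MS2} and part~(3) of Proposition~\ref{prop:main-space}. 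For $n\ge 3$ and $[A]\cdot[D]>0$ this number is strictly negative, so a smooth manifold of that dimension must be empty; therefore $\mc{M}^{\tn{reg}}(X_+,A)=\emptyset$ for every $J\in\mc{J}_{\mc{X}}^{l,\tn{reg}}(A)$. Finally I would take
$$\mc{J}_{\mc{X}}^{l,\tn{reg}}=\bigcap_{A:\,[A]\cdot[D]>0}\mc{J}_{\mc{X}}^{l,\tn{reg}}(A),$$
a countable intersection of residual subsets, which is residual and hence dense in $\mc{J}_{\mc{X}}^l$ by the Baire category theorem (using that $\mc{J}_{\mc{X}}^l$, being the space of sections of a fibration cut out by closed conditions, is a Baire space); for $J$ in this set $\mc{M}^{\tn{reg}}(X_+,A)$ is empty for all relevant $A$ simultaneously.

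The main obstacle is the surjectivity of the constrained linearized operator: one must be careful that perturbing $J$ only within $\mc{J}_{\mc{X}}^l$ rather than within all compatible almost complex structures on $X_+$ still suffices. The point to nail down is that somewhere injectivity forces an injective point away from the locus $D$ where the admissibility constraints bite, and that the standard argument producing a $(0,1)$-form-valued variation $Y$ with $Y\circ du$ prescribed at the injective point can be arranged with $Y$ supported in a small ball in $X_+\setminus D$ on which every compatible variation of $J$ is admissible; the compatibility with $\ker d\pi$ is automatic on $X_+$ since there $d\pi$ is constant along the fibers. Once this local statement is in place, the rest is the verbatim Sard--Smale machinery of \cite[Chapter 3]{MS2}, together with the passage from $C^l$ to $C^\infty$ via the standard Taubes-type argument if a smooth version is wanted later, though for the present lemma working in class $C^l$ is enough.
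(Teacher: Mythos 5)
Your proposal is correct and follows essentially the same route as the paper: a universal moduli space over $\mc{J}_{\mc{X}}^l$, Sard--Smale for the projection, negative virtual dimension, and a Baire intersection over the countably many classes $A$. Where you improve on the paper is in spelling out why the linearization is surjective when the perturbation of $J$ is restricted to the admissible class $\mc{J}_{\mc{X}}^l$ (the paper merely cites \cite[Chapter~6]{MS2}, which treats the unconstrained case); your observation that a somewhere injective point away from $D$ suffices, since the admissibility constraints are vacuous in $X_+\setminus D$, is the right fix. One small point worth making explicit: the assertion that $[A]\cdot[D]>0$ forces the sphere out of $D$ is not automatic from the numerics alone — it holds here because $\mc{N}_D^{X_+}\cong(\mc{N}_D^{X_-})^*$ has negative degree on every nonconstant holomorphic sphere in $D$, so curves contained in $D$ always satisfy $[A]\cdot[D]\le 0$.
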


\begin{proof}
Let 
$$\mc{M}^{\tn{reg}}(X_{+},\mc{J}_{\mc{X}}^l,A)= \coprod_{J\in \mc{J}_{\mc{X}}^l} \mc{M}^{\tn{reg}}(X_{+},J,A)$$
be the universal moduli space. By \cite[Chapter 6]{MS2}, the linearization map
\begin{equation*}
\begin{split}
D_{J,u} \colon W^{k,p}(\P^1, u^*TX_{+}) \oplus T_J\mc{J}_{\mc{X}}^l &\to W^{k-1,p} (\P^1,u^*TX_{+} \otimes_J \La_J^{0,1} T\P^1),\\
D_{J,u} (\xi,Y) &= L_{J,u}(\xi) + \frac{1}{2} Y(u) du\circ j,
\end{split}
\end{equation*}
of the Cauchy-Riemann operator is surjective for every $(u,J) \in \mc{M}^{\tn{reg}}(X_{+},\mc{J}_{\mc{X}}^l,A)$. 
The projection map $\pi \colon \mc{M}^{\tn{reg}}(X_{+},\mc{J}_{\mc{X}}^l,A) \to \mc{J}_{\mc{X}}^l$ is Fredholm, and the kernel and cokernel of $d\pi$ are isomorphic to the kernel and cokernel of $L_{J,u}$. By the Sard-Smale theorem \cite[Theorem A.5.1]{MS2}, the set of regular values of $\pi$ is of the second category, provided $l-1 \geq 0, \index(L_{J,u})$. On the other hand, $J \in \mc{J}_{\mc{X}}^l$ being a regular value for $d\pi$ means that $L_{J,u}$ is surjective, and so $\mc{M}^{\tn{reg}}(X_{+},J,A)$ is a negative-dimensional smooth moduli space and therefore empty. Taking intersection over all curve classes $A \in H_2(X_{+},\Z)$, we find a dense set of almost complex structures for which all the moduli spaces $\mc{M}^{\tn{reg}}(X_{+},J,A)$ with $[A]\cdot[D]> 0$ are empty. 
\end{proof}

Given $J\in \mc{J}^{l,\tn{reg}}_{\mc{X}}$ and $\la \in \De$, let $J_{\la}=J|_{X_{\la}}$ as before. Suppose $E>0$, $\la_{i}\in \De^*$ is a sequence converging to $0$, and $[u_i]$ is a sequence of $J_{\la_i}$-holomorphic discs such that $\om_{\mc{X}}([u_i])<E$. This sequence is a sequence of $J$-holomorphic discs in a compact subset of $\mc{X}$ with a uniform energy bound. By the Gromov Compactness Theorem, there is a $J_0$-holomorphic map $u_0\colon B \to \mc{X}_0$ and a sequence of orientation-preserving diffeomorphisms $\psi$ of the domains of $u_i$ such that a subsequence of $u_i\circ \psi_i$ converges to $u_0$.
Furthermore, every component of $u_0$ has image in either $X_{-}$ or $X_{+}$ and least one component maps to $X_{+}$ intersecting $D$ in a nonempty discrete set. The last claim holds for the following reason. Each $u_{\la_i}$ intersects $L$, so $u_0$ has non-empty intersection with $L \subset X_{-}$, which means there is an irreducible component $u_{-}$ of $u_0$ mapped into $X_{-}$. We know $D$ is obtained by a symplectic cut along some hypersurface $V_a=h^{-1}(a) \subset X$. Consider the contact hypersurfaces $V_{a+\ep}$ (for $\ep > 0$ small) in $X$. After performing symplectic cut along $V_a$, we get  copies of $ V_{a+\ep}$ in $X_{+}$ which are the boundaries of a tubular neighborhood of $D \subset X_{+}$. Each $u_{i}$ has a non-empty intersection with $V_{a+\ep}$, because the symplectic form inside the neighborhood of $L$ surrounded by $V_{a+\ep}$ is exact and so there are no $J_{\lambda_i}$-holomorphic disc completely inside $V_{a+\ep}$. Thus, the limit curve $u_0$ has a non-empty intersection with $V_{a+\ep} \subset X_{+}$, and so there are some irreducible components of $u_0$ mapped into~$X_{+}$  and not contained in $D \subset X_{+}$. Since the domain of $u_0$ is connected, a component of $u_0$ mapped into $X_+$ and not contained in $D$ intersects $D$. Since $J_0|_{X_+}$ is $D$-compatible, this component intersects $D$ at finitely many points. 
However, by Lemma \ref{empty}, $\mc{M}^{\tn{reg}}(X_+,J,A)=\emptyset$ if $J\in \mc{J}_{\mc{X}}^{l,\tn{reg}}$ and $A\cdot D>0$. Thus, $\ov{\mc{M}}^{\disc}(X,L,J_{\la_i},\beta)=\emptyset$ for all $\la\in \De^*$ small and $\beta\in H_2(X,L)$ such that $\om(\beta)<E$. Once again, by the Gromov Compactness Theorem this also holds for some neighborhood $U_E$ of $J_{\la} \in \mc{J}_{X_{\la}}$. Since $X\cong \mc{X}_{\la}$, this finishes the proof of Theorem \ref{emptiness}.

\subsection{Proof of Theorem \ref{open-closed}}\label{proofs}

Let $(X,\om,\phi)$ be a symplectic manifold with a real structure such that $c_1(TX)=0$ and $L=\Fix(\phi)=\R\P^3$. Let $\pi:\mc{X}\to \De$ be the associated fibration of Section~\ref{ch:surgery} and $\mc{Y}=\pi^{-1}([0,1]) \subset \mc{X}$. Each fiber of $\mc{Y}\to[0,1]$ is invariant under the induced involution $\phi_{\mc{X}}$. Fix some compatible $J$ on $\mc{X}$ and define 
$$\ov{\mc{M}}^{\disc}(\mc{Y},L,\left\{J_t\right\}_{t\in (0,1]},\beta)= \bigcup_{t\in(0,1]} \ov{\mc{M}}^{\disc}(\mc{X}_t,L,J_t,\beta).$$ 
Let $\ov{\mc{M}}^{\disc}(\mc{Y},L,\{J_t\}_{t\in I},\beta)$, where $I=[0,1]$, be the relative stable map compactification of $\ov{\mc{M}}^{\disc}(\mc{Y},L,\{J_t\}_{t\in (0,1]},\beta)$, similar to \cite{IP2}, \cite{Li}, and Section \ref{relative-open-invariants}, including maps to the fiber over zero.

Every element $(u,\Si)$ of $\ov{\mc{M}}^{\disc}(\mc{Y},L,\{J_t\}_{t\in I},\beta)$ in $\mc{X}_0$ belongs to a fiber product of  relative moduli spaces over $X_{-}$ and $X_{+}$ with matching conditions on $D$,
\begin{equation}\label{equ:fiber-prod}
(u,\Sigma) \in  \ov{\mc{M}}^{\disc}(X_-,L,D,\rho,\Gamma_{-})\times_{(\ev_{\xi^-},\ev_{\xi^+})}\ov{\mc{M}}(X_+,D,\rho,\Gamma_{+}),
\end{equation}
where $\ov{\mc{M}}^{\disc}(X_-,L,D,\rho,\Gamma_{-})$ and $\ov{\mc{M}}(X_+,D,\rho,\Gamma_{+})$ are the relative moduli spaces with the same intersection pattern $\rho$, $\xi^\pm$ are contact points with $D$, and $\Gamma_{\pm}$ encodes the data corresponding to the topological types of the domain and image.

There is a fiber-wise involution $\tau_{\mc{M}}$ on $\ov{\mc{M}}^{\disc}(\mc{Y},L,\{J_t\}_{t\in [0,1]},\beta)$ as before. Fix a spin structure $\sigma$ on $L$. For a tuple $\rho=(s_1,\cdots,s_k)$, let $|\rho|=\prod s_i$.

\begin{proposition}\label{kur-str-main}
Let $(X^6,\omega,\phi)$ be a symplectic manifold with $c_1(TX)=0$. The moduli space $\ov{\mc{M}}^{\disc}(\mc{Y},L,\{J_t\}_{t\in I},\beta)$ has a topology with respect to which it is compact and Hausdorff. It has a $\tau_{\mc{M}}$-invariant oriented Kuranishi structure of virtual dimension $1$ with respect to which the projection $\pi$ is smooth. The codimension one boundary components correspond to the moduli spaces of the form (\ref{boundary-equ1}), (\ref{boundary-equ2}), the fiber over $\la=1$, and a covering of (\ref{equ:fiber-prod}), \\ 
\fontsize{10}{12}\selectfont
\xymatrix{
&\mc{M}(\mc{X}_0,\rho,\Gamma_{-},\Gamma_{+}) \ar[d]_{|\rho|-\tn{covering}}^{\pi} \ar[r]^{\iota_{(\Gamma_{-},\Gamma_{+})}} & \partial\ov{\mc{M}}^{\disc}(\mc{Y},L,\{J_t\}_{t\in I},\beta)\\
&\mc{M}^{\disc}(X_-,L,D,\rho,\Gamma_{-})\times_{(\ev_{\xi^-},\ev_{\xi^+})}\mc{M}(X_+,D,\rho,\Gamma_{+}) &}
\vskip.05in
\normalsize
\noindent which is compatible with the Kuranishi structures and the orientation induced by the spin structure $\sigma$.
\end{proposition}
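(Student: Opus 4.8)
The plan is to build the required structure on $\ov{\mc{M}}^{\disc}(\mc{Y},L,\{J_t\}_{t\in I},\beta)$ in four stages: topology and compactness, the Kuranishi structure away from the singular fiber, the analysis near $\la=0$ including the neck-stretching description of the boundary, and finally the orientation/involution compatibility. First I would establish the point-set topology: the convergence notion is the relative stable map convergence adapted to the one-parameter family $\mc{X}\to\De$ restricted to $I=[0,1]$, exactly as in \cite{IP2}, \cite{Li}, and Section~\ref{relative-open-invariants}, but with a boundary on the domain. Compactness follows from Gromov compactness for $J$-holomorphic discs with uniformly bounded energy in the compact piece $\mc{Y}$ (the energy is controlled by $\om_{\mc{X}}(\beta)$), together with the standard rescaling argument near $D$ that produces the broken configurations in $X[n]$; Hausdorffness is as in \cite[Chapter 7]{FOOO}. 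The global virtual dimension is $\dim_{\C}X + \mu(\beta) + 0 - 3 + 1 = 1$, the extra $+1$ coming from the parameter~$t$; over $\la\in\De^*$ this is just the family version of Proposition~\ref{kur-str-1}, and I would note $\pi$ is smooth since $t$ is a free parameter on the top stratum.

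Next I would address the Kuranishi charts. Over $\pi^{-1}((0,1])$ this is the family Kuranishi structure of \cite{FO-3},\cite[Chapter 7]{FOOO} as recalled in Proposition~\ref{kur-str-1}, the only new feature being $D$-relative tangency conditions on components in $X_+$, whose Kuranishi charts are built as in Appendix~\ref{ch:kuranishi} / \cite{IP1},\cite{Li}. Over the central fiber the charts are obtained by the standard gluing-and-smoothing construction: a relative stable map to $X_-\cup_D X_+$ with matching contact orders $\rho$ along $D$ can be glued, with gluing parameters one for each node over $D$, to nearby $J_{\la}$-holomorphic discs in $\mc{X}_{\la}$ for $\la\ne 0$; this is where the $|\rho|$-fold covering in the statement appears, since smoothing a node of contact order $s_i$ introduces an $s_i$-th root ambiguity in the gluing parameter, exactly as in \cite[Section~11]{IP2} and \cite{Li}. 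Matching the obstruction bundles of the pieces along $D$ gives the obstruction bundle on $\mc{M}(\mc{X}_0,\rho,\Gamma_-,\Gamma_+)$, and one checks the transition maps glue the family chart across $\la=0$; the upshot is that the central-fiber stratum sits inside $\partial\ov{\mc{M}}^{\disc}(\mc{Y},L,\{J_t\}_{t\in I},\beta)$ as a codimension-one face via $\iota_{(\Gamma_-,\Gamma_+)}$, in parallel with the $t=1$ face and the disc- and sphere-bubbling faces (\ref{boundary-equ1})--(\ref{boundary-equ2}) inherited fiberwise from Proposition~\ref{kur-str-1}.

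For the orientation and the involution, I would proceed as in Section~\ref{open-invariants-review}: the spin structure $\sigma$ orients the disc components via \cite[Section~8]{FOOO}, the canonical $J$-orientation orients the sphere components in $X_-$ and $X_+$, and the contact/tangency conditions along $D$ contribute the standard orientation on the relative moduli space and on the fiber product over $D$; compatibility of these orientations with the gluing across $\la=0$, and with the parameter direction~$t$, is checked node by node following \cite{FO-3} and the sign conventions of \cite[Appendix]{IP1}. The fiberwise involution $\tau_{\mc{M}}$ was constructed in (\ref{tau-M}) and extends to the relative/broken setting because $\phi_{\mc{X}}$ preserves each fiber of $\mc{Y}\to I$, acts on $X[n]$ and on $D$, and intertwines the gluing data (compare the last paragraph of Section~\ref{sec:involution}); invariance of the Kuranishi structure and multisections under $\tau_{\mc{M}}$ is then arranged by averaging, as in \cite[Section~7]{S}.

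I expect the main obstacle to be the gluing analysis across the singular fiber \emph{with a Lagrangian boundary present}: one must glue relative discs-with-sphere-components in $X_-\cup_D X_+$ to honest $J_\la$-holomorphic discs in $\mc{X}_\la$, keeping the boundary on the Lagrangian subfibration $\mc{L}_\la$ and keeping track of the tangency orders at the $D$-nodes, and then verify that the resulting chart matches the family Kuranishi chart over $\la\ne0$ so that the central fiber is genuinely a codimension-one boundary face and the $|\rho|$-covering is the exact multiplicity. This combines the symplectic sum gluing of \cite{IP2},\cite{Li} with the bordered gluing of \cite[Chapter 7]{FOOO} and the $D$-compatible almost complex structures of Definition~\ref{D-compatible}; the estimates are routine but the bookkeeping of gluing parameters, obstruction bundles, and orientations at each node over $D$ is the delicate part. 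The sphere- and disc-bubbling faces and the $t=1$ face, by contrast, are immediate consequences of the fiberwise statement Proposition~\ref{kur-str-1}.
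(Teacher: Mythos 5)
Your proposal is correct and follows essentially the same route as the paper, which defers the substance of this proposition to Appendix~\ref{ch:kuranishi}: build Kuranishi charts stratum by stratum, use the fiber-product charts with matching conditions along~$D$ for the central fiber, and obtain the $|\rho|$-covering from the $s_i$-th root ambiguity in solving $a_i b_i\mu_i^{s_i}=\ep$ at each node of contact order~$s_i$, together with the relative gluing theorem of \cite[Sections 5--8]{IP2} adapted to obstruction bundles, and $\tau_{\mc{M}}$-invariance arranged by choosing the obstruction spaces $\mc{E}_u$ conjugation-compatibly. The one simplification you implicitly rely on but could make explicit is that $D\cap L=\emptyset$, so all contact points with~$D$ are interior and the relative gluing at~$D$ never interacts with the Lagrangian boundary condition; this is exactly what lets the paper reduce Proposition~\ref{gluing} to a modification of the bordered-curve gluing in \cite[Proposition~7.2.12]{FOOO} while handling the $D$-nodes by the closed relative gluing of~\cite{IP1,IP2}.
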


In Appendix \ref{ch:kuranishi} below, we describe the Kuranishi structure and the covering space $\mc{M}(\mc{X}_0,\rho,\Gamma_-,\Gamma_+)$. 

We now turn to the proof of Theorem \ref{open-closed}. By Proposition \ref{kur-str-main},
\begin{equation*}
\begin{split}
&N^{\disc}_{(\beta,J\mid_{\pi^{-1}(1)},\mf{s}\mid_{\pi^{-1}(1)})} = \sum_{(\Gamma_{-},\Gamma_{+}),\rho} \displaystyle\frac{1}{\mf{Aut}(\Gamma_{-},\Gamma_{+})}\; \# [\mc{M}(\mc{X}_0,\rho,\Gamma_-,\Gamma_+)]^{\mf{s}} \\
&\qquad - \sum_{\beta_1+\beta_2=\beta}\# [\ov{\mc{M}}^{\disc}_{1,0}(\mc{Y},L,\{J_t\}_{t\in \overset{\circ}{I}},\beta_1)\!\times_{(\ev_1,\ev_1)}\! \ov{\mc{M}}^{\disc}_{1,0}(\mc{Y},L,\{J_t\}_{t\in \overset{\circ}{I}},\beta_2)]^{\mf{s}},
\end{split}
\end{equation*}
where $\overset{\circ}{I}=(0,1)$ and $\mf{Aut}(\Gamma_{-},\Gamma_{+})$ is the finite automorphism  group of the $(\Gamma_{-},\Gamma_{+})$ configuration. By Proposition \ref{cancellation}, the last term above is zero. Therefore, 
\begin{equation}\label{equ:summation}
N^{\disc}_{\beta} = \sum_{(\Gamma_{-},\Gamma_{+}),\rho} \displaystyle\frac{1}{\mf{Aut}(\Gamma_{-},\Gamma_{+})}\; \# [\mc{M}(\mc{X}_0,\rho,\Gamma_-,\Gamma_+)]^{\mf{s}}.
\end{equation}
The sum on the left-hand side of (\ref{equ:summation}) corresponds to the boundary terms coming from the central fiber, in a similar way to \cite[Theorem 3.15]{Li} and \cite[Theorem 12.3]{IP2}. Therefore, (\ref{equ:summation}) is an open version of the symplectic sum formula.

If $\rho=(s_1,\cdots,s_k)$,
$$\dim^{\tn{vir}}(\ov{\mc{M}}(X_+,D,\rho,\Gamma_{+}))= k - \sum_{i=1}^k s_i.$$ 
Therefore, the only $\rho$ for which we get a non-trivial contribution is the trivial one, $\rho_0=(1,\cdots,1)$. By Proposition \ref{kur-str-main}
$$\mc{M}(X_0,\rho_0,\Gamma_{-},\Gamma_{+})=\ov{\mc{M}}^{\disc}(X_{-},L,D,\rho_0,\Gamma_{-})\times_{(\ev_{\xi^-},\ev_{\xi^+})}\ov{\mc{M}}(X_{+},D,\rho_0,\Gamma_{+}).$$
It remains to understand the fiber-product term on the right. Since $\ov{\mc{M}}(X_+,D,\rho_0,\Gamma_{+})$ has virtual dimension zero and $\tau_{\mc{M}}$-invariant Kuranishi structure,
$$\ev_{\xi^{+}}[\ov{\mc{M}}(X_+,D,\rho_0,\Gamma_{+})]^{\mf{s}} \subset D^{k}$$
is a $\phi_{X_{+}}$-invariant zero-dimensional chain, which we denote by $\gamma_{\Gamma{+}}$. Then $N^{\tn{rel}}_{\Gamma_{+}}=\left|\gamma_{\Gamma_{+}}\right| \in \Q$ is the closed relative GW invariants of the class $\Gamma_+$ counting elements of the corresponding relative moduli space. Therefore,
\begin{equation*}
\begin{split}
[\ov{\mc{M}}^{\disc}(X_-,L,D,\rho_0,\Gamma_{-})\times_{(\ev_{\xi^-},\ev_{\xi^+})}\ov{\mc{M}}(X_+,D,\rho_0,\Gamma_{+})]^{\mf{s}}&\\ =[\ov{\mc{M}}^{\disc}(X_-,L,D,\rho_0,\Gamma_{-})\times_{\ev_{\xi^-}} \gamma_{\Gamma_{+}}]^{\mf{s}}&.
\end{split}
\end{equation*}

\begin{lemma}\label{lem:decomposition}
With the notation as above,
$$\# [\ov{\mc{M}}^{\disc}(X_-,L,D,\rho_0,\Gamma_{-})\times_{\ev_{\xi^-}} \gamma_{\Gamma_{+}}]^{\mf{s}}= \alpha_{\Gamma_{-}}^{\tn{rel},\disc} N_{\Gamma_{+}}^{\tn{rel}} ,$$
where $\alpha_{\Gamma_{-}}^{\tn{rel},\disc}$ are the numbers defined in (\ref{alpha-invariants}).
\end{lemma}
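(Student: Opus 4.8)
The plan is to regard the left-hand side as a function of the $\phi_{X_+}$-invariant $0$-chain $\gamma_{\Gamma_+}$ and to show it depends on $\gamma_{\Gamma_+}$ only through its total mass $|\gamma_{\Gamma_+}|=N^{\tn{rel}}_{\Gamma_+}$. Since $L\cong\R\P^3$, Corollary~\ref{coro:main-coro} (the $k=2$ case) identifies $(X_-,D,L)$ with $(\P^3,Q^2,\R\P^3)$, so $\mc{M}_-:=\ov{\mc{M}}^{\disc}(X_-,L,D,\rho_0,\Gamma_-)$ is precisely the relative open moduli space of Example~\ref{example1} for the topological type $\Gamma_-$, and $\ev_{\xi^-}$ maps it to $D^m\cong(Q^2)^m$, where $m$ is the number of edges of $\Gamma$, equivariantly for $\tau_{\mc{M}}$ and the diagonal involution $\phi$ on $D^m$ (cf.\ (\ref{tau-M}) and Section~\ref{relative-open-invariants}).

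First I would check that $\gamma\mapsto\#[\mc{M}_-\times_{\ev_{\xi^-}}\gamma]^{\mf{s}}$ is additive in $\gamma$ and annihilates $\phi$-invariant $0$-boundaries. If $\gamma-\gamma'=\partial c$ with $c$ a $\phi$-invariant $1$-chain, chosen of product form relative to the factorization $D^m=(Q^2)^m$, then $[\mc{M}_-]^{\mf{s}}\times_{\ev_{\xi^-}}c$ is a $1$-chain whose boundary is $\mc{M}_-\times_{\ev_{\xi^-}}(\gamma-\gamma')$ up to a term supported on the disc-bubbling strata of $\mc{M}_-$; on that term $\tau_{\mf{glue}}$ is an orientation-reversing involution that still preserves the pullback of $c$ (since $c$ is already invariant on each $Q^2$-factor, the partial conjugations entering $\tau_{\mf{glue}}$ do not change it), so the signed count vanishes, exactly as in the proof that $\alpha^{\tn{rel},\disc}_{\Gamma_-}$ is well defined. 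Here $\tau_{\mf{glue}}$ is unambiguously defined because $\partial\beta\neq0$ forces $d_0:=\tn{E}_{v_0}(\Gamma)$ to be odd (cf.\ the Remark before Proposition~\ref{cancellation}). Then I would use that $D^m=(Q^2)^m$ is connected and $\Fix(\phi|_{D^m})=(Q^2_{\R})^m\cong(S^2)^m$ is nonempty and path-connected to conclude that every $\phi$-invariant $0$-chain is $\phi$-equivariantly cobordant to a multiple of a point — pair its support into sets $\{q,\phi(q)\}$ and singletons in $\Fix$, slide each pair to a fixed pair along a path together with its conjugate and each fixed point to a fixed basepoint along a $\phi$-invariant path, and invoke $2[q_*]\sim[q]+[\phi(q)]$ — so the group of $\phi$-invariant $0$-chains modulo $\phi$-invariant $0$-boundaries in $D^m$ is $\Z$, detected by the mass. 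Therefore
$$\#\big[\,\mc{M}_-\times_{\ev_{\xi^-}}\gamma_{\Gamma_+}\,\big]^{\mf{s}}\;=\;c_{\Gamma_-}\cdot|\gamma_{\Gamma_+}|\;=\;c_{\Gamma_-}\cdot N^{\tn{rel}}_{\Gamma_+}$$
for a constant $c_{\Gamma_-}\in\Q$ depending only on $\Gamma_-$.

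To pin down $c_{\Gamma_-}$, I would apply this identity to the $\phi$-invariant chain $\gamma_\Gamma=\gamma_0\times\gamma_1\times\cdots$ of Example~\ref{example1} (with $\Gamma$ there equal to $\Gamma_-$), which has mass $|\gamma_\Gamma|=|\gamma_0|\prod_i|\gamma_i|=2^{d_0}$, whereas $\#[\mc{M}_-\times_{\ev_{\xi^-}}\gamma_\Gamma]^{\mf{s}}=2^{d_0}\alpha^{\tn{rel},\disc}_{\Gamma_-}$ directly from Definition~(\ref{alpha-invariants}). Comparing gives $c_{\Gamma_-}=\alpha^{\tn{rel},\disc}_{\Gamma_-}$, and hence $\#[\mc{M}_-\times_{\ev_{\xi^-}}\gamma_{\Gamma_+}]^{\mf{s}}=\alpha^{\tn{rel},\disc}_{\Gamma_-}\,N^{\tn{rel}}_{\Gamma_+}$, as claimed.

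The main obstacle is the boundary argument in the second step: because $\mc{M}_-$ has genuine codimension-one (disc-bubbling) boundary, the $\phi$-cobordism invariance of the fiber-product count is not formal and relies on precisely the orientation-reversing involution $\tau_{\mf{glue}}$ and the $\tau_{\mc{M}}$-invariant Kuranishi and multisection data already needed to make $\alpha^{\tn{rel},\disc}_{\Gamma_-}$ well defined; the one genuinely new point is the need to carry out the cobordism one $Q^2$-factor at a time so that the partial conjugations occurring in $\tau_{\mf{glue}}$ leave the bounding chain invariant. Everything else — the identification from Corollary~\ref{coro:main-coro}, the computation of the $\phi$-equivariant $H_0$, the already-performed passage from the fiber product over $\ov{\mc{M}}(X_+,D,\rho_0,\Gamma_+)$ to $\gamma_{\Gamma_+}$, and the orientation bookkeeping following \cite{S} and \cite{FOOO} — is routine.
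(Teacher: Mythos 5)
Your strategy — reduce to a $\phi$-equivariant cobordism argument showing that $\#\bigl[\mc{M}_-\times_{\ev_{\xi^-}}\gamma\bigr]^{\mf{s}}$ depends only on $|\gamma|$, then normalize against the reference chain $\gamma_\Gamma$ — is close in spirit to the paper's, but the mechanism you propose for controlling the disc-bubbling boundary does not quite work, and this is where the argument has a genuine gap.

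Your cancellation step requires the bounding $1$-chain $c$ to be invariant under the \emph{partial} conjugations $\mf{c}_\ep$ ($\ep\in\Z_2^{d_0}$) of the first $d_0$ coordinates, since $\tau_{\mf{glue}}$ conjugates only the disc constraints hit by the bubble $u_2$, and that subset varies over the stratum. To guarantee this you take $c$ of product form with each $Q^2$-factor $\tau_3$-invariant; but then $\partial c$ is automatically $\Z_2^k$-invariant. The chain $\gamma_{\Gamma_+}=\ev_{\xi^+}\bigl[\ov{\mc{M}}(X_+,D,\rho_0,\Gamma_+)\bigr]^{\mf{s}}$ is \emph{not} $\Z_2^k$-invariant (it is only $\Z_2^{d_0}$-invariant on the first $d_0$ factors, and $\phi$-invariant overall), so it cannot differ from a multiple of $\gamma_\Gamma$ by the boundary of such a chain. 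Concretely, your ``pair the support into sets $\{q,\phi(q)\}$ and slide one factor at a time'' step fails: if $q=(\xi_1,\ldots,\xi_k)$ with, say, $\xi_2\notin Q^2_{\R}$, then $[q]+[\phi(q)]$ is not the boundary of any factor-wise $\tau_3$-invariant product $1$-chain, since the boundary of such a chain necessarily contains all $2^k$ partial conjugates of $q$ (with matching multiplicities). So the ``equivariant $H_0=\Z$'' computation you invoke is for the wrong equivalence relation.

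The paper avoids this by a different mechanism: rigidity rather than cancellation. The key input you do not use is the structural observation that, since the unique bordered component $\Si_0$ of $\Gamma_-$ meets the domain of $\Gamma_+$ in disjoint irreducible components, one may flip each of those $d_0$ components of $\Gamma_+$ independently; hence $\gamma_{\Gamma_+}$ is a union of $\Z_2^{d_0}$-orbits $S$ of size $2^{d_0}$ (acting on the first $d_0$ coordinates, with an induced action $\ep\mapsto\ep'(\ep)$ on the rest). Within each orbit $S$ the disc constraints already form the full set $\{\xi_1,\tau_3\xi_1\}\times\cdots\times\{\xi_{d_0},\tau_3\xi_{d_0}\}$, and the paper cobords only the sphere constraints (the last $k-d_0$ coordinates) to a common point $q$, holding the disc constraints fixed. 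Along this cobordism the disc component of each curve is rigid, so disc bubbling simply does not occur and no $\tau_{\mf{glue}}$-cancellation is needed; the endpoint $S_1=\{\xi_1,\tau_3\xi_1\}\times\cdots\times\{\xi_{d_0},\tau_3\xi_{d_0}\}\times\{q\}$ is exactly of the form of $\gamma_\Gamma$, giving $2^{d_0}\alpha^{\tn{rel},\disc}_{\Gamma_-}$ per orbit, and summing over $|\gamma_{\Gamma_+}|/2^{d_0}$ orbits gives the claimed identity. If you replace your $\{q,\phi(q)\}$-pairing with this $\Z_2^{d_0}$-orbit decomposition, your cancellation step becomes superfluous for the sphere moves (no bubbling by rigidity) and works correctly for the disc moves (each orbit already has the needed $\Z_2^{d_0}$-invariance), and your normalization step then goes through as written.
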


\begin{figure}
\begin{center}
\includegraphics[scale=.6]{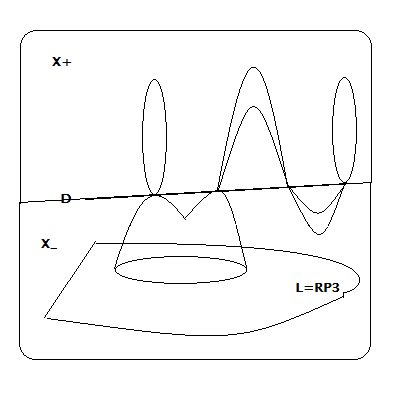}
\end{center}
\caption{A typical $J$-holomorphic map in singular fiber.}\label{Fig:relative-curve}
\end{figure}

\begin{proof}
A typical element of 
$$\ov{\mc{M}}^{\disc}(X_-,L,D,\rho_0,\Gamma_{-})\times_{(\ev_{\xi^-},\ev_{\xi^+})}\ov{\mc{M}}(X_+,D,\rho_0,\Gamma_{+})$$
represents a curve as in Figure~\ref{Fig:relative-curve}.
In this figure, the domain of $\Gamma_{+}$ consists of three rational curves, while the domain of $\Gamma_{-}$ consists of a disc component and a rational curve. Since $\Gamma_{-}\#\Gamma_{+}$ is a degeneration of the disc, the unique bordered component of $\Gamma_{-}$, say $(\Sigma_0,\partial{\Si}_0)$, intersects the domain of $\Gamma_{+}$ in disjoint irreducible components. Thus, if the topological type of $\Gamma_{-}$ over $(\Si_0,\partial{\Si}_0)$ is $[\frac{d_0}{2}]\in H_2(\C\P^3,\R\P^3)$, then $\Gamma_{+}$ has at least $d_0$ components.

Let $(\xi_1,\cdots,\xi_{d_0})$ denote the intersection points of $\Si_0$ with the common divisor $D$ and let $(\xi_1,\cdots,\xi_{k}) \in D^k$ be the set of all intersection points. For $\ep=(\ep_1,\cdots,\ep_{l}) \in \Z_2^{l}$, let $\mf{c}_{\ep}\colon D^{l} \to D^{l}$ be the map which is equal to the identity on $i$-th factor if $\ep_i=0$ and  to $\tau_3$ if $\ep_i=1$. 

If $(\xi_1,\cdots,\xi_k) \in \gamma_{\Gamma_{+}}$, then $(\mf{c}_{\ep}(\xi_1,\cdots,\xi_{d_0}),\mf{c}_{\ep'}(\xi_{d_0+1},\cdots,\xi_{k}))$ is also in $\gamma_{\Gamma_{+}}$, where $\ep\in \Z_2^{d-0}$ is arbitrary and $\ep'\in \Z_{2}^{k-d_0}$ depends on $\Gamma_{+}$ and $\ep$. This is because $\Si_0$ meets $\Gamma_{+}$ at disjoint components $\Si_i$, and for every $J_{+}$-holomorphic map $u\colon \Sigma_{i} \to X_{+}$, $\tau_{\mc{M}}(u)$ is also a $J_{+}$-holomorphic map with same the topological type; that is,  we can flip each individual component of $\Gamma_{+}$ using the induced involution $\phi_{+}$ on $X_{+}$.

Given $(\xi_1,\cdots,\xi_{k}) \in \gamma_{\Gamma_{+}}$, let $S$ be the set of the $2^{d_0}$ tuples obtained from $(\xi_1,\cdots,\xi_{k})$. Let $q=(q_{d_0+1},\cdots,q_{k}) \in D^{k-d_0}$. For each point $\mf{c}_{\ep'}(\xi_{d_0+1},\cdots,\xi_{k})$ as above, choose a path $\gamma_{\ep'}(t)$, $0 \leq t \leq 1$, in $D^{(k-d_0)}$ connecting these two points. Let $S_{t}$ be the set obtained by replacing a point of the form $(\mf{c}_{\ep}(\xi_1,\cdots,\xi_{d_0}),\mf{c}_{\ep'}(\xi_{d_0+1},\cdot,\xi_{k}))$ with 
$$(\mf{c}_{\ep}(\xi_1,\cdots,\xi_{d_0}),\gamma_{\ep'}(t)).$$ 
Then, $S_1$ is of the form $\gamma_0\times \left\{q\right\} \in D^{k}$ as in (\ref{incidence}).
For each $S_t$, define 
$$\mc{M}^{\disc}_{t,\Gamma_{-}}=\ov{\mc{M}}^{\disc}(X_-,L,D,\rho_0,\Gamma_{-})\times_{\ev_{\xi^-}} S_t$$
to be the zero-virtual-dimensional relative moduli space with incidence condition determined by $S_t$ and 
$$\mc{M}^{\disc}_{\Gamma_{-}}= \bigcup_{t\in I} \mc{M}^{\disc}_{t,\Gamma_{-}}$$
to be their union. Then $[\mc{M}_{\Gamma_{-}}^{\disc}]^{\mf{s}}$ is a one-dimensional cobordism between $[\mc{M}^{\disc}_{0,\Gamma_{-}}]^{\mf{s}}$ and $[\mc{M}^{\disc}_{1,\Gamma_{-}}]^{\mf{s}}$, because the part of the incidence condition which corresponds to the disc part $(\Sigma_1,\partial{\Sigma_1})$ of $\Gamma_{-}$ is fixed and the disc part of each curve in cobordism is fixed, and so disc-bubbling does not happen in the middle. We conclude that
\begin{equation*}
\begin{split}
\#[\ov{\mc{M}}^{\disc}(X_-,L,D,\rho_0,\Gamma_{-})\times_{\ev_{\xi^-}} S_0]^{\mf{s}}& =\# [\ov{\mc{M}}^{\disc}(X_-,L,D,\rho_0,\Gamma_{-})\times_{\ev_{\xi^-}} S_1]^{\mf{s}}\\
& = 2^{d_0} \alpha_{\Gamma_{-}}^{\tn{rel},\disc}.
\end{split}
\end{equation*}
Performing this for all points in the 0-chains $\gamma_{\Gamma_{+}}$ and then adding up all the terms gives the desired result.
\end{proof}

By $(\ref{equ:summation})$ and Lemma $\ref{lem:decomposition}$,
\begin{equation}\label{sum-formula-2}
N_{\beta}^{\disc}= \sum_{\Gamma_{-}\#\Gamma_{+}=\beta} \displaystyle\frac{1}{\mf{Aut}(\Gamma_{-},\Gamma_{+})} N_{\Gamma_{+}}^{\tn{rel}} \alpha_{\Gamma_{-}}^{\tn{rel},\disc}.
\end{equation}  
Thus, the open invariants of $(X,L)$ can be expressed as a linear function of the relative GW invariants of the symplectic manifold $X_{+}$ and the universal constants $\alpha_{\Gamma_{-}}^{\tn{rel},\disc}$. As mentioned at the end of Section~\ref{relative-open-invariants}, by dimensional reason $$\alpha_{\Gamma_{-}}^{\tn{rel},\disc}=N_{\Gamma_-}^{\disc}.$$ 

Since the intersection pattern is trivial and $N_{\Gamma_+}^{\tn{rel}}$ involves no absolute constraints (lying in $D$), $N_{\Gamma_+}^{\tn{rel}}$ is also equal to its absolute version $N_{\Gamma_+}$. Thus
$$ N_{\beta}^{\disc}(X,L)=\sum_{\beta=\Gamma_1\#\Gamma_2} \frac{1}{\mf{Aut}(\Gamma_1,\Gamma_2)} N_{\Gamma_-}^{\disc}(\P^3,\R\P^3) N_{\Gamma_+}(X_+).$$

\appendix
\section{Kuranishi structure on $\ov{\mc{M}}^{\disc}(X,L,D,\rho,\beta)$}\label{ch:kuranishi}

This section outlines a construction of Kuranishi structure on the relative moduli space $\ov{\mc{M}}^{\disc}(X,L,D,\rho,\beta)$.
It includes all the steps needed to put Kuranishi structure on the other moduli spaces in this paper. The case with marked points can be treated similarly. We refer to \cite{FOOO} and \cite{S} for the definition and basic properties of Kuranishi structures and to \cite{MS2, IP2} for the details on gluing theorems that we use here. Throughout this section $(X,\om)$ denotes a symplectic manifold, $L\subset X$ and $D\subset X$ a Lagrangian submanifold and a symplectic hypersurface, $\beta \in H_2(X,L)/\sim$, $\rho=(s_1,\cdots,s_m)$, $l(\rho)=m$. We fix a compatible almost complex structure $J$.

\subsection{Irreducible $D$-regular maps}

Let $u\colon(D^2,S^1)\to(X,L)$ be a $D$-regular  $J$-holomorphic map; thus, $ \tn{Im}(u) \cap D$ is finite. We denote the set of such maps by $\mc{M}^{*,\tn{reg},\disc}(X,L,D,\rho,\beta)$. There are $l(\rho)$ marked points $\vec{\xi}$ corresponding to the contact points with $D$. Define $$E_u=u^*TX,\qquad F_u=u|_{S^1}^*TL,\quad \tn{and}\quad E_{u}^{0,1}=(T^*D^2)^{0,1} \otimes_{\C} E_u.$$
There are commutative diagrams
\begin{equation}\begin{split}\label{diag:E}
\xymatrix{
& E_u \ar[d]_{\pi} \ar[rr]^{T_{\phi}} && E_{\tilde{u}} \ar[d]_{\pi}&&&  E_u^{0,1} \ar[d]_{\pi} \ar[rr]^{T^1_{\phi}} && E_{\tilde{u}}^{0,1} \ar[d]_{\pi}\\
& D^2             \ar[rr]^{c}     && D^2                    &&& D^2                     \ar[rr]^{\id}    && D^2}
\end{split}\end{equation}
where $c(z)=\bar{z}$, $\tilde{u}=\phi,\circ u\circ c$, $T_{\phi}v=d\phi(v)$, and $T^1_{\phi}\alpha=d\phi\circ\alpha\circ dc$.
Fix $p>2$ and $l>\max{s_i}$. Let $W^{l,p}(E_u,F_u)_{\rho}$ be the set of vector fields of class $W^{l,p}$ vanishing to order $s_i$ at each intersection point $\xi_i$ with $D$ and tangent to $TL$ along~$S^1$. Similarly, let $W^{l-1,p}(E_u^{0,1})_{\rho}$ be the set of $E_u$-valued $(0,1)$-forms of class $W^{l-1,p}$ vanishing to order $s_i-1$ at each~$\xi_i$. 
The linearized Cauchy-Riemann operator then is a map 
$$ D_u\colon W^{l,p}(E_u,F_u)_{\rho} \to W^{l-1,p}(E_u^{0,1})_{\rho}.$$ 
This fit into a commutative diagram
$$\xymatrix{
& W^{l,p}(E_u,F_u)_{\rho} \ar[d]_{\tilde{T}_{\phi}} \ar[rr]^{D_u} && W^{l-1,p}(E_u^{0,1})_{\rho} \ar[d]_{\tilde{T}^1_{\phi}} \\
& W^{l,p}(E_{\tilde{u}},F_{\tilde{u}})_{\rho}                       \ar[rr]^{D_{\tilde{u}}} && W^{l-1,p}(E_{\tilde{u}}^{0,1})_{\rho} }         $$
where $\{\tilde{T}_{\phi}\xi\}(z)=T_{\phi}(\xi(c(z)))$  and $\{\tilde{T}^1_{\phi}\alpha\}(z) = T^1_{\phi}(\alpha(z))$. 
Choose finite-dimensional subspaces $\mc{E}_u \subset W^{l-1,p}(E_u^{0,1})_{\rho}$ and $\mc{E}_{\tilde{u}} \subset W^{l-1,p}(E_{\tilde{u}}^{0,1})_{\rho}$ such that every $\eta \in \mc{E}_u $ is smooth and supported away from the boundary and marked points, $D_u$ modulo $\mc{E}_u$ is surjective, and  $\tilde{T}^1_{\phi}(\mc{E}_u)= \mc{E}_{\tilde{u}}$. The last condition guarantees that $\tau_{\mc{M}}$ induces an involution on the Kuranishi structure. 

We take our Kuranishi neighborhood to be $V(u)= (\pi\circ D_u)^{-1}(0)$, modulo the automorphism group  $\tn{PSL}(2,\R)$ of the disc, which is a smooth manifold of dimension 
$$\mu(\beta)+n-3+2l(\rho)+ \dim(E_{u})- 2D\cdot\beta.$$
The obstruction bundle at each $f \in V_u$ is obtained by parallel translation of $\mc{E}_{u}$ with respect to the induced metric of $J$. Thus, we get a vector bundle $E(u)$ and a Kuranishi neighborhood $(V(u),E(u))$. The Kuranishi map in this case is just the Cauchy-Riemann operator $f \ra \bar\partial f$. If there are additional boundary or interior marked points, the tangent space is bigger and includes the tangent spaces of marked points. In this case the Kuranishi structure is a product of Kuranishi structure of the map and the moduli space of marked points.

\subsection{Nodal $D$-regular maps}

Let 
$$[u,\Si] \in \ov{\mc{M}}^{\tn{reg},\disc}(X,L,D,\rho,\beta)\setminus \mc{M}^{*,\tn{reg},\disc}(X,L,D,\rho,\beta)$$
be so that the domain is nodal, but the image is still $D$-regular. Write $ \Si = \coprod \Si_i$, where each $\Si_i$ is a smooth curve isomorphic to either the disc or the sphere. Then each $u_i= u\mid_{\Si_i}$ is an irreducible map. For simplicity we assume there are only two components $\Si_1$, $\Si_2$ in the decomposition;  we can further assume that the two components are discs with a boundary point in common. 
The cases with more components or with sphere components can be treated similarly.

We can assume that the node is given by $1\in \Si_i$ for each of the two discs. Let $q=u_i(1) \in L$. If $[u_1]=\beta_1$ and $[u_2]=\beta_2$, then 
\begin{gather*}
(u_i,\Si_i,1) \in \mc{M}^{*,\tn{reg},\disc}_{1,0}(X,L,D,\rho_i,\beta_i), \\
[u,\Si]  \in \mc{M}^{*,\tn{reg},\disc}_{1,0}(X,L,D,\rho_1,\beta_1)\times_{(\ev_1,\ev_1)} \mc{M}^{*,\tn{reg},\disc}_{1,0}(X,L,D,\rho_2,\beta_2).
\end{gather*}
Let $(V(u_i),E(u_i))$ be the Kuranishi neighborhoods constructed in the previous section and
$$ V(u_1,u_2)= (\ev_1\times \ev_1)^{-1} (\De),$$
where $\De$ is the diagonal in $L\times L$ and $\ev_{1} \colon V(u_i) \to L$ are the evaluation maps. 
For $V(u_1,u_2)$ to be a manifold, we need  $\ev_1\times \ev_1$ to be a submersion. We can choose $\mc{E}_{u_i}$ big enough so that both evaluation maps $\ev_1$ are submersions. For a fixed $\beta$, there are only finitely many topological types of nodal maps which can appear in the limit, and so by induction we can choose obstruction bundles at each step big enough so that the induced Kuranishi structures on the corners of the moduli space for $\beta$ obey the required conditions. Thus, with a suitable choice of obstruction bundles $\mc{E}_{u_i}$, $V(u_1,u_2)$ is a smooth manifold with projections 
$$ \pi_1,\pi_2 \colon V(u_1,u_2) \to V(u_1), V(u_2).$$
Then $\mc{E}_{f_1,f_2}= \pi_1^{-1}\mc{E}_{f_1}\oplus \pi_2^{-1}\mc{E}_{f_2}$ gives the fiber of the corresponding obstruction bundle over $V(u_1,u_2)$ and the Kuranishi map is as before. Thus, we get a Kuranishi neighborhood $(V(u_1,u_2),E(u_1,u_2))$ of $u$ in the boundary component
$$\mc{M}^{*,\tn{reg},\disc}_{1,0}(X,L,D,\rho_1,\beta_1)\times_{(\ev_1,\ev_1)} \mc{M}^{*,\tn{reg},\disc}_{1,0}(X,L,D,\rho_2,\beta_2).$$
In order to extend this Kuranishi neighborhood to a Kuranishi neighborhood $V(u)= V(u_1,u_2) \times [0,\ep)$ of $u$ in the original moduli space, we glue the domain and deform the nodal maps in $V(u_1,u_2)$ into $J$-holomorphic discs modulo obstruction.

Let $z_1,z_2$ be local coordinates near $1\in D^2$, modeled on the closure of upper half-plane as neighborhoods of  $0\in \H$. For each positive real gluing parameter $\mu$, consider the Riemann surface $\Si_{\mu}\cong D^2$ obtained by gluing $\Si_1$ and $\Si_2$ via $z_1z_2=-\mu$. This gluing respects the orientation of the boundary on each part. Since the divisor $D$ is disjoint from $L$, a straightforward modification of the proof of \cite[Proposition 7.2.12]{FOOO} yields the following.
 
\begin{proposition}\label{gluing}
There is a continuous family of embeddings 
$$\iota_{\mu} \colon V(u_1,u_2) \to W^{1,p}(X,\beta),\quad \mu \in (0,\ep),$$ 
with the following properties: 
\begin{enumerate}
\item $f_{\mu}=\iota_{\mu}(f_1,f_2)$ converges to $(f_1,f_2)$ as $\mu \rightarrow 0$;
\item $\bar{\partial}_{J}f_{\mu} \in \mc{E}_{f_1,f_2}$, where $\mc{E}_{f_1,f_2}$ is a subspace of $W^{l-1,p}(E_{f_{\mu}}^{0,1})$ obtained via parallel translation;
\item every map $f'$ close enough to some $f\in V(u_1,u_2)$ with $\bar{\partial}_{J}f' \in \mc{E}_{f'}$ is in the image of some $\iota_{\mu}$.
\end{enumerate}
\end{proposition}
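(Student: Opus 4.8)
\emph{Proof plan.} The plan is to run the standard gluing (Newton--Picard) scheme of \cite[Section~7.2]{FOOO} (see also \cite[Chapter~10]{MS2}), adapted to the Lagrangian boundary condition at the node and to the contact constraints encoded in $\rho$. The one new feature, the divisor $D$, causes no difficulty: since $D\cap L=\emptyset$ and all of the gluing takes place in a fixed small neighborhood of the boundary node $q=f_i(1)\in L$, everything happens far from $D$, so the vanishing conditions of order $s_i$ at the contact marked points $\vec\xi$ are untouched and the weighted spaces $W^{l,p}(E_\cdot,F_\cdot)_\rho$ and $W^{l-1,p}(E_\cdot^{0,1})_\rho$ behave exactly as in the divisor-free case.

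First I would construct the pre-glued approximate map. Given $(f_1,f_2)\in V(u_1,u_2)$, with $f_i$ matching at $1\in D^2$ and common value $q\in L$, form the glued domain $\Si_\mu\cong D^2$ via $z_1z_2=-\mu$ and define $f_\mu^{\tn{app}}\colon(\Si_\mu,\partial\Si_\mu)\to(X,L)$ by writing $f_1$ and $f_2$ in $g_J$-normal coordinates at $q$ --- in which the boundary condition becomes ``values in $T_qL$'' --- and interpolating the two local expressions across the strip neck with a cutoff function. Since both maps send $\partial\Si_\mu$ to $L$ and agree to first order at $q$, the result again sends $\partial\Si_\mu$ to $L$, and a standard estimate gives $\|\bar\partial_J f_\mu^{\tn{app}}\|_{W^{l-1,p}}=O(\mu^{1/p-\de})$ for small $\de>0$, all of this being supported in a region disjoint from $D$.

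Next I would establish a uniform right inverse and solve. Equip the domain and target Sobolev spaces with $\mu$-dependent norms that are exponentially weighted along the neck. Using that $D_{u_i}$ modulo $\mc{E}_{u_i}$ is surjective and that the evaluation maps $\ev_1\colon V(u_i)\to L$ are submersions --- precisely the conditions built into the choice of $\mc{E}_{u_i}$ when $V(u_1,u_2)$ was defined --- splice approximate right inverses for the two pieces into a genuine right inverse $Q_\mu$ of $D_{f_\mu^{\tn{app}}}$ modulo the parallel-transported obstruction space $\mc{E}_{f_1,f_2}$, with $\|Q_\mu\|$ bounded uniformly in $\mu$ (the usual ``approximate inverse $\Rightarrow$ inverse'' correction, with error $O(\mu^c)$). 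Then apply the quantitative inverse function theorem \cite[Proposition~A.3.4]{MS2}: since $\|\bar\partial_J f_\mu^{\tn{app}}\|$ is small, $\|Q_\mu\|$ is uniformly bounded, and the quadratic term of $\bar\partial_J$ is uniformly controlled on the relevant balls, there is a unique small $\xi_\mu\in\mathrm{im}\,Q_\mu$ with $\bar\partial_J(\exp_{f_\mu^{\tn{app}}}\xi_\mu)\in\mc{E}_{f_1,f_2}$; set $\iota_\mu(f_1,f_2)=\exp_{f_\mu^{\tn{app}}}\xi_\mu$. The boundary condition persists because $\xi_\mu$ is tangent to $L$ along $\partial\Si_\mu$ and $\exp$ of the $\phi$-invariant metric preserves $L$. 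Smooth dependence of $f_\mu^{\tn{app}}$, $Q_\mu$, $\xi_\mu$ on the data, together with the $\mu\to0$ estimate, gives continuity of $\iota_\mu$ and property~(1); injectivity and property~(3) follow from the uniqueness clause, by reading off a near-breaking parameter $\mu$ from the geometry of a nearby obstructed solution $f'$ (e.g.\ the length of its thin neck), writing $f'=\exp_{f_\mu^{\tn{app}}}\eta$ with $\eta$ small, decomposing $\eta$ into a $\ker D_{f_\mu^{\tn{app}}}$-part (identified with the tangent directions of $V(u_1,u_2)$) plus an $\mathrm{im}\,Q_\mu$-part, and invoking uniqueness.

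I expect the main obstacle to be exactly this last point, the surjectivity statement~(3): showing that the gluing map is a local homeomorphism onto a neighborhood of the boundary stratum --- extracting a well-defined gluing parameter from an arbitrary nearby obstructed solution, decomposing it cleanly, and excluding spurious solutions. The uniform invertibility estimates in the weighted strip norms are technically the heart of the matter but are by now routine; the real care is in the node-matching and submersion bookkeeping, and in checking --- which is where $D\cap L=\emptyset$ is used --- that none of it interacts with the contact constraints at $\vec\xi$.
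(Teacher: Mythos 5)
Your proposal matches the paper's approach: the paper proves Proposition~\ref{gluing} simply by observing that since $D\cap L=\emptyset$ the boundary node lies away from the divisor, so that the gluing theorem for bordered stable maps, \cite[Proposition~7.2.12]{FOOO}, applies verbatim after a straightforward modification. You have correctly identified that observation as the crux and have fleshed out the Newton--Picard scheme that underlies the cited result, so the content is the same even though the paper leaves the details to the reference.
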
 

\subsection{Non $D$-regular maps}

We now consider the case $[u,\Si]$ is not $D$-regular. This means some component of $u$ is mapped into the divisor $D$. As we will see, the part mapped into $D$ satisfies certain properties, and not every such map can be a limit of a $D$-regular stable relative maps.
By Section~\ref{relative-open-invariants}, every non $D$-regular map 
$$[u] \in \ov{\mc{M}}^{\disc}(X,L,D,\rho,\beta)\setminus \mc{M}^{\tn{reg},\disc}(X,L,D,\rho,\beta)$$ 
can be modeled as a stable map into the singular space $X[n]$ as in Definition \ref{stable-relative-map}. Therefore, we can write $\Si= \coprod \Si_i$ such that $u_0= u\mid_{\Si_0}$ is a $D$-regular map into $(X,D)$, possibly from a disconnected domain, and $u_i= u\mid_{\Si_i}$, $i>0$, is a $D$-regular map into $(Y_D, D_{0}\cup D_{\infty})$. Thus, 
\begin{equation*}
\begin{split}
[u_0] &\in \mc{M}^{\tn{reg},\disc}(X,L,D,\rho_0,\Gamma_0),\\
[u_i] &\in \mc{M}^{\tn{reg}}(Y_D,D_0\cup D_{\infty},\rho_i^0\cup\rho_i^{\infty},\Gamma_i)/\C^*,\quad i>0,
\end{split}
\end{equation*}
where
\begin{itemize}
	\item $\Gamma_i$ describes the topological type of the domain and the homology class of the image;
	\item $\rho_i^{0}$ describes the intersection pattern of the map with $D_{0} \subset Y_D$ and $\rho_{i}^{\infty}$, $i>0$, describes the intersection pattern of $u_i$ with $D_{\infty} \subset  Y_D$, and $\rho_i^{0}=\rho_{i+1}^{\infty}$;
	\item the $\C^*$-action on the space of maps into $Y_D$ comes from the $\C^*$-action on the $\P^1$ fibers.
\end{itemize}
For simplicity, we assume that $\Si=\Si_0 \cup \Si_1$ and $u_0$ and $u_1$ are not nodal. The general case is an easy extension of this case. From the previous sections, we have Kuranishi neighborhoods $(V(u_i), E(u_i))$ and evaluation maps 
$$ \ev_i= \ev_{\vec{\zeta}_i}\colon V(u_i) \to D^{l(\rho_0)}, $$
where $\rho_0=\rho_{1}^{0\infty}$ is the intersection pattern between two components and $\vec{\zeta}_i$ are contact points with $D$. Let
$$V(u_0,u_1)= (\ev_0\times \ev_1)^{-1} (\De)$$
be the inverse image of the diagonal map. For each $(f_0,f_1)\in V(u_0,u_1)$, let $\mc{E}_{f_0,f_1}=\mc{E}_{f_0}\oplus \mc{E}_{f_1}$. If $V(u_i)$ is big enough, $V(u_0,u_1)$ is a manifold. This way we get a Kuranishi neighborhood of $[u]$ in
$$ \mc{M}^{\tn{reg},\disc}(X,L,D,\rho_0,\Gamma_0)\times_{(\ev_0,\ev_1)} \mc{M}^{\tn{reg}}(Y_D,D_0\cup D_{\infty}, \rho_i^0\cup\rho_i^{\infty},\Gamma_i)/\C^*. $$
A gluing theorem similar to Proposition \ref{gluing} is needed to extend this to a Kuranishi neighborhood of $[u]$ on $\ov{M}^{\disc}(X,L,D,\rho,\beta)$. Contrary to the previous case, the gluing is not unique, and we get a gluing map from some covering space of $V(u_0,u_1)$. This is the space $\mc{M}(\mc{X}_0,\rho,\Gamma_-,\Gamma_+)$ which appears in the statement of Proposition \ref{kur-str-main}.

Let $(f_0,f_1) \in V(u_0,u_1)$. Since the obstruction bundle is supported away from the marked and nodal points, $u_i$ is $J$-holomorphic near the intersection points $\vec{\zeta}_i$ of $u_i$ with $D$. Let $\vec{q}=\ev_0(\vec{\zeta}_0)=\ev_1(\vec{\zeta}_1)$ be the set of intersection points with $D$. By the symplectic sum procedure of Section \ref{ch:surgery}, we can construct a family $\mc{X}$ over some small disc $\De$ whose central fiber is $X\cup_{D} Y_D$ and whose other fibers are isotopic to $X$ itself. Moreover, in this case, $\mc{X}$ is obtained by blowing up $X\times \De$ along $D\times \left\{0\right\}$. We denote  by $J$ to be the complex structure on $\mc{X}$.

We choose a set of local coordinate charts on $\mc{X}$ around the points $\vec{q}$ as follows. Fix a $\C$-linear identification of $T_{q_i}D$ with $\C^{n-1}$ and extend it to normal coordinates $(v_i^j)$ around $q_i$ in $D$. Let $L_-$ be the normal bundle of $D$ in $X$ and let $L_+$ be the normal bundle of $D$ in $Y_D$. Identifying $L_-\mid_{q_i}$ with $\C$, taking a direct sum with the dual, and parallel translating along radial lines in the $v$-coordinates, we obtain coordinates $(x,y)\colon (L_{-}\oplus L_{+}) \to \C\oplus \C$. This gives a coordinate chart $(v,x,y)$ near each $q_i$ such that the projection $\mc{X} \ra \De$ is given by 
$$(v,x,y)\ra xy\in L_-\otimes L_+=\C .$$ In these coordinates, the almost complex structure $J$ on $\mc{X}$ agrees with the standard almost complex structure on $\C^{n-1}\oplus \C \oplus \C$ at the origin, and $J$ has the form $J_D\oplus J_{\C} \oplus J_{\C}$ along $D$. By \cite[Lemmas 3.2,3.4]{IP1}, in these coordinates and around each $q_i$, $f\in V(u_0,u_1)$ can be written in the form 
$$ f(z_i,w_i)=f_0(z_i) \# f_1(w_i) = ( h^v(z,w), a_iz_i^{s_i}(1+h^x), b_iw_i^{s_i}(1+h^y)),$$ 
with $h^v(0,0)=h^x(0,0)=h^y(0,0)=0$.
In order to glue the two parts of a map $f$ to get a map $f_{\mu} \in W^{1,p}(X,L)$ with $\bar{\partial}_{J}f_{\mu} \in \mc{E}_{f_{\mu}}$, we first glue the domains. Let $\mu=(\mu_1,\cdots\mu_k)$ be a tuple of (sufficiently small) complex numbers  and define $\Si_{\mu}$ to be the Riemann surface obtained by gluing the domains around the interior marked points $\vec{\zeta}_i$ via the equation $z_jw_j=\mu_j$. So we are replacing the node with a small cylinder described by the gluing parameters $\mu_i$ at each node. If we can glue two parts of $f$ and get a map $f_{\mu}$ as above for small ${\mu}$, then the part of $f_{\mu}$ which is mapped to the neck can be written in the form $f_{\mu}(z_i,w_i)=(v_{\mu_{i}},x_{\mu_{i}},y_{\mu_{i}})$ with $x_{\mu_{i}}y_{\mu_{i}}=\ep$ for some fixed $\ep \in \De$. For small $\ep$, the maps $f_{\mu}$ are closely approximated by $(q_i,a_i z_i^{s_i},b_i w_i^{s_i})$ near the intersection point $q_i$; see \cite[Section 5]{IP2}. So for $f_{\mu}$ to be in $X_{\ep}$, we need 
\begin{equation}\label{gluing-condition}
 a_i b_i \mu_i^{s_i} = \ep .
 \end{equation}
This shows that there are altogether $\left|\rho\right|=\prod s_i$ possibilities for choosing $\mu$ (for a fixed $\ep$), and each choice leads to a different map. The coefficients $a_i$ and $b_i$ are the $s_i$-jets of the components of $f_j$ normal to $D$ at $\zeta_j^{i}$ modulo higher order terms, and so  
$$ a_i \in (T_{\zeta_0^{i}}^*\Si_{0})^{s_i}\otimes L_{-,q_i},\;\;\; b_i \in (T_{\zeta_1^{i}}^*\Si_{1})^{s_i}\otimes L_{+,q_i}. $$
Let $\mathcal{L}_i^{j}$, $i=0,1$, be the relative cotangent bundle to $\Si_{i}$ at $\zeta_i^{j}$. These are complex line bundles over  the Deligne-Mumford moduli space and the leading coefficients are sections 
$$ a_i \in \Gamma((\mathcal{L}_0^{i})^{s_i}\oplus \ev_{0i}^* L_{-})\quad \tn{and}\quad b_i  \in \Gamma((\mathcal{L}_1^{i})^{s_i}\oplus \ev_{1i}^* L_{+}). $$ 
We conclude that $\mu$ is a multisection of the bundle 
$$\bigotimes [(\mathcal{L}_0^{i})^*\otimes (\mathcal{L}_1^{i})^*] \rightarrow V(u_0,u_1).$$
We define $\tilde{V}(u_0,u_1)$ to be the total space of this multisection. This is an \`etale covering of $V(u_0,u_1)$ and we can pull back the obstruction bundle to get a Kuranishi chart $(\tilde{V}(u_0,u_1),E(u_0,u_1))$. 

In order to finish the construction of a Kuranishi structure on $\ov{\mc{M}}^{\disc}(X,L,D,\rho,\beta)$, we need a gluing theorem to extend the Kuranishi neighborhood $(\tilde{V}(u),E(u))$ to a Kuranishi neighborhood of $[u]$ on $\ov{\mc{M}}^{\disc}(X,L,D,J,\beta)$. This gluing theorem is provided by a slight modification (to accommodate obstruction bundles) of the gluing theorem in \cite[Sections 5--8]{IP2}.

\begin{proposition}\label{relative-gluing}
There is a continuous family of orientation-preserving embeddings
$$\iota_{\mu} \colon \tilde{V}(u_0,u_1) \to W^{1,p}(X,\beta)$$
with the following properties: 
\begin{enumerate}
\item $f_{\mu}=\iota_{\mu}(f_0,f_1)$ converges to $(f_1,f_2)$ as $\mu \rightarrow 0$;
\item $\bar{\partial}_{J}f_{\mu}\in \mc{E}_{f_{\mu}}$, where $\mc{E}_{f_{\mu}}$ is the subspace of $E^{0,1}_{f_{\mu}}$ obtained via parallel translation;
\item any map $f'$ close enough to some $f\in \tilde{V}(u)$ with $\bar{\partial}_{J}f'\in\mc{E}_{f'}$ is in the range of some $\iota_{\mu}$.
\end{enumerate}
\end{proposition}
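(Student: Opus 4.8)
The plan is to follow the relative-stable-map gluing construction of \cite[Sections 5--8]{IP2}, incorporating the two adjustments already made elsewhere in this paper: solving the perturbed equation $\bar\partial_J f\in\mc{E}_f$ rather than $\bar\partial_J f=0$, so as to stay within the Kuranishi framework, and working with bordered domains carrying the Lagrangian boundary condition, exactly as in the modification of \cite[Proposition~7.2.12]{FOOO} behind Proposition~\ref{gluing}. The feature that keeps the boundary harmless is that $L\cap D=\emptyset$, so the neck region being glued is disjoint from $\partial\Si$ and the analysis over the neck is purely that of the closed case.

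First I would carry out the pre-gluing. Given $(f_0,f_1)\in\tilde V(u_0,u_1)$ — that is, a point of $V(u_0,u_1)$ together with a choice of $\mu=(\mu_1,\dots,\mu_k)$ solving $a_ib_i\mu_i^{s_i}=\ep$ — I form $\Si_\mu$ by the local equations $z_jw_j=\mu_j$ at the nodes $\vec\zeta_i$ and build an approximate map $f_\mu^{\tn{app}}\colon(\Si_\mu,\partial\Si_\mu)\to X$ by interpolating $f_0$ and $f_1$ against the model neck maps $(q_i,a_iz_i^{s_i},b_iw_i^{s_i})$ in the coordinates fixed before the proposition, with $xy=\ep$. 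Because the tangency pattern $\rho$ is built into these local models, the pre-glued map is naturally parametrized by $\tilde V(u_0,u_1)$ rather than by $V(u_0,u_1)$; and since the obstruction bundles are supported away from the nodes, $f_\mu^{\tn{app}}$ is genuinely $J$-holomorphic on the neck, with $\bar\partial_J f_\mu^{\tn{app}}$ supported in the cutoff zones and decaying like a positive power of $|\mu|$ in the appropriate weighted norm.

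Next I would establish a uniform right inverse and run a Newton iteration. Using surjectivity of $D_{u_i}$ modulo $\mc{E}_{u_i}$ on each component, together with the submersivity of the evaluation maps that defines $V(u_0,u_1)$, I would patch the two right inverses with cutoffs to obtain a right inverse of $D_{f_\mu^{\tn{app}}}$ modulo $\mc{E}_{f_\mu^{\tn{app}}}$ with norm bounded independently of $\mu$; this is where the annulus estimates of \cite[Section 5]{IP2} on the thin neck enter. A contraction-mapping argument, with the usual quadratic estimate for $\bar\partial_J$, then yields for each small $\mu$ a unique small correction $\xi_\mu$ with $f_\mu\equiv\exp_{f_\mu^{\tn{app}}}\xi_\mu$ satisfying $\bar\partial_J f_\mu\in\mc{E}_{f_\mu}$, and I set $\iota_\mu(f_0,f_1)=f_\mu$. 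Properties (1) and (2) are then immediate; injectivity of $\iota_\mu$ (hence the embedding claim) follows from uniqueness in the contraction together with property (1); and orientation-preservation follows because the gluing of linearized operators respects the determinant-line orientation induced by the spin structure $\sigma$, the model neck operator being complex linear, as checked in \cite[Section 8]{FOOO}. For property (3) I would argue in the converse direction: a perturbed-holomorphic $f'$ close to $[u_0]\cup[u_1]$ has a long thin neck mapped near $D$, and the isoperimetric/annulus estimates of \cite[Section 5]{IP2} force it into the asymptotic normal form $(q_i,a_iz_i^{s_i}(1+\cdots),b_iw_i^{s_i}(1+\cdots))$ on the neck, hence determine a point of $\tilde V(u_0,u_1)$ and a value of $\mu$; uniqueness in the contraction then identifies $f'$ with $\iota_\mu(f_0,f_1)$.

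The hard part will be the uniform invertibility of $D_{f_\mu^{\tn{app}}}$ modulo $\mc{E}_{f_\mu^{\tn{app}}}$ across the neck while keeping track of the tangency conditions $\rho$: one must glue the weighted spaces $W^{l,p}(E,F)_\rho$ with prescribed vanishing at the contact points so that the result is again the correct space on $\Si_\mu$, and bound the right inverse independently of $\mu$ (equivalently of $\ep$). This is precisely the content of \cite[Sections 5--8]{IP2}; the modification needed here — bookkeeping of the finite-dimensional summands $\mc{E}$ and of the Lagrangian boundary, neither of which interacts with the neck — is routine but must be carried out with care.
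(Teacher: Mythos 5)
Your proposal takes the same route the paper does: the paper proves this proposition only by appealing to a ``slight modification (to accommodate obstruction bundles)'' of the gluing argument in \cite[Sections 5--8]{IP2}, and your outline is a faithful (and considerably more detailed) expansion of exactly that citation, correctly identifying $L\cap D=\emptyset$ as the reason the Lagrangian boundary does not interfere with the neck analysis. No discrepancy.
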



\end{document}